\documentclass[10pt]{amsart}
\usepackage{amssymb}

%   Theorem-like declarations
\theoremstyle{plain}
\newtheorem{prop}{Proposition}[section]
\newtheorem{thm}[prop]{Theorem}
\newtheorem{coro}[prop]{Corollary}
\newtheorem{lemma}[prop]{Lemma}

\theoremstyle{definition}
\newtheorem{defi}[prop]{Definition}
\newtheorem{exam}[prop]{Example}
\newtheorem{rem}[prop]{Remark}

\theoremstyle{remark}

%   Commands
\numberwithin{equation}{section}

%   Special fonts
\newcommand{\N}{\mathbb N}
\newcommand{\Z}{\mathbb Z}

\newcommand{\R}{\mathbb R}
\newcommand{\C}{\mathbb C}
\newcommand{\Hy}{\mathbb H}

%   Greek letters abbreviations

\newcommand{\g}{\gamma}
\newcommand{\G}{\Gamma}
\newcommand{\ld}{\lambda}
\newcommand{\Ld}{\Lambda}

\newcommand{\ba}{\backslash}

\newcommand{\ovr}{\overset}

%   Symbols abbreviations
\newcommand{\f}{\frac}

%   Maths
\newcommand{\I}{\textsl{Id}}
\newcommand{\GL}{\operatorname{GL}}
\newcommand{\Ot}{\operatorname{O}}
\newcommand{\SO}{\operatorname{SO}}
\newcommand{\spec}{\operatorname{Spec}}

\newcommand{\Hom}{\operatorname{Hom}}
\newcommand{\Iso}{\operatorname{Iso}}
\newcommand{\so}{\mathfrak{so}}
\newcommand{\op}{\operatorname}
\newcommand{\tr}{\operatorname{tr}}

\newcommand{\Id}{\textsl{Id}}

\title[Representation equivalence and p-Spectrum]{Representation equivalence and p-Spectrum of constant curvature space forms}
\author{E. A. Lauret, R. J. Miatello and J. P. Rossetti}
\address{FaMAF--CIEM \\ Universidad Nacional de C\'ordoba\\ 5000-C\'ordoba, Argentina.}
\email{elauret@famaf.unc.edu.ar}
\email{miatello@famaf.unc.edu.ar}
\email{rossetti@famaf.unc.edu.ar}
\keywords{representation equivalent, $p$-spectrum, constant curvature, space forms, isospectrality}
\thanks{2010 {\it Mathematics Subject Classification.} Primary 58J53; Secondary 22D10, 58J50.}
\date{\today}

\begin{document}

\begin{abstract}
We study the $p$-spectrum of a locally symmetric space of constant curvature $\G\ba X$, in connection with the right regular representation of the full isometry group  $G$ of $X$  on  $L^2(\G\ba G)_{\tau_p}$, where $\tau_p$ is the complexified $p$-exterior representation of $\Ot(n)$ on $\bigwedge^p(\R^n)_\C$.
We give an expression of the multiplicity $d_\lambda(p,\G)$ of the eigenvalues of the $p$-Hodge-Laplace operator  in terms of multiplicities $n_\G(\pi)$ of specific irreducible unitary representations of $G$.

As a consequence, we extend results of Pesce for the spectrum on functions to the $p$-spectrum of the Hodge-Laplace operator on $p$-forms of $\G\ba X$, and we compare $p$-isospectrality with $\tau_p$-equivalence for $0\le p\le n$.
For spherical space forms, we show that $\tau$-isospectrality implies $\tau$-equivalence for a class of $\tau$'s that includes the case $\tau=\tau_p$.
Furthermore we prove that $p-1$ and $p+1$-isospectral implies $p$-isospectral.

For nonpositive curvature space forms, we give examples showing that $p$-isospectrality is far from implying $\tau_p$-equivalence, but a variant of Pesce's result remains true. Namely, for each fixed $p$, $q$-isospectrality for every $0\le q\le p$ implies $\tau_q$-equivalence for every $0\le q\le p$. As a byproduct of the methods we obtain several results relating $p$-isospectrality with $\tau_p$-equivalence.
\end{abstract}

\maketitle

\section{Introduction}

Let $X=G/K$ be a homogeneous Riemannian manifold where $G=\Iso(X)$ is the full isometry group and where $K\subset G$ is a compact subgroup.
We shall consider discrete \emph{cocompact} subgroups $\Gamma$ of $G$ acting on $X$ without fixed points, so that $\Gamma\ba X$ is a compact Riemannian manifold.
Under the right regular representation $R_\G$ of  $G$, $L^2(\G\ba G)$ splits as a direct sum
$$
L^2(\G\ba G)= \sum_{\pi\in \widehat G} n_\G(\pi) H_\pi
$$
of closed irreducible subspaces $H_\pi$ with finite multiplicity $n_\G(\pi)$.
Here $\widehat G$ denotes the unitary dual of $G$.
Let $(\tau,V_\tau)$ be a finite dimensional complex unitary representation of $K$ and consider the associated vector bundle
\begin{equation}\label{etau}
E_\tau:=G\underset\tau\times V_\tau \longrightarrow G/K
\end{equation}
endowed with a $G$-invariant inner product (see Subsection~\ref{subsec:bundles}).
Let $\Delta_{\Gamma,\tau}$ denote the Laplace operator acting on sections of the bundle $\Gamma\ba E_\tau\to \Gamma\ba X$ (see Subsection~\ref{subsec:bundles}).

In \cite{Pe2}, Pesce considers spectra of Laplace operators on $\G\ba X$, in connection with the right regular representations $R_{\G,\tau}$ of $G$ on the space
\begin{equation}\label{tauequiv}
L^2(\G\ba G)_\tau := \sum_{\pi\in \widehat G_\tau} n_\G(\pi)
H_\pi,
\end{equation}
where
$\widehat G_\tau=\{\pi\in\widehat G : \operatorname{Hom}_K(\tau,\pi)\ne0\}.$
In the terminology in \cite{Pe2}, two subgroups $\G_1,\G_2$ of $G$, are said to be $\tau$-\emph{representation equivalent} or simply $\tau$-\emph{equivalent}, if the representations $L^2(\G_1\ba G)_\tau$ and $L^2(\G_2\ba G)_\tau$ are equivalent, that is, $n_{\G_1}(\pi)=n_{\G_2}(\pi)$ for any $\pi\in \widehat G_\tau$.
In the case when $\tau={\bf 1}$, the trivial representation of $K$, Pesce calls such groups \emph{$K$-equivalent}.
In analogy, $\Gamma_1\ba X$ and $\Gamma_2\ba X$ are said to be $\tau$-isospectral if the spectra of the Laplace operators $\Delta_{\Gamma_1,\tau}$, $\Delta_{\Gamma_2,\tau}$ are the same.

The question of comparing equivalence (resp.\ $\tau$-equivalence) of representations with isospectrality (resp.\ $\tau$-isospectrality) has been studied by several authors in recent years (see for instance \cite{DG}, \cite{Pe1}, \cite{Pe2}, \cite{GM}, \cite{BR}, \cite{BPR}, \cite{Wo2}).
One has that if two groups $\G_1,\G_2$ are $\tau$-equivalent, then $\Gamma_1\ba X$ and $\Gamma_2\ba X$ are $\tau$-isospectral (see \cite[App.~Prop.~2]{Pe2} or Proposition~\ref{prop2:tau-equiv=>tau-iso}).
Furthermore, Pesce has shown for constant sectional curvature space forms, that the converse holds for $\tau ={\bf 1}$, that is, if the manifolds $\Gamma_1\ba X$ and $\Gamma_2\ba X$ are isospectral on functions, then $\G_1$ and $\G_2$ are $K$-equivalent (see \cite[\S~3, Prop.~2]{Pe2}).

In this paper, again in the  context of spaces of constant sectional curvature, that is, of compact manifolds covered by $S^n$, $\R^n$ or $\Hy^n$, we will study the case when $\tau=\tau_p$, the complexified $p$-exterior representation of $\Ot(n)$ on $\bigwedge^p(\R^n)_\C$, thus
$\Delta_{\Gamma,\tau}$ is the Hodge-Laplace operator acting on $p$-forms.
That is, we study the $p$-spectrum of  $\G\ba X$ in connection with the representation   $L^2(\G\ba G)_{\tau_p}$.
A main tool will be the following formula, valid for any compact locally symmetric space $\G\ba X$ and any representation $\tau$ of $K$, expressing the multiplicity of an eigenvalue $\lambda$ of $\Delta_{\Gamma,\tau}$ in terms of the coefficients $n_\Gamma(\pi)$ for $\pi\in\widehat G_{\tau}$:
\begin{equation*}
d_{\lambda}(\tau, \G)=\sum_{\pi\in\widehat G : \, \ld(C,\pi)=\lambda} n_\G(\pi) \dim\big(\operatorname{Hom}_K(V_\tau^*,H_\pi) \big).
\end{equation*}
Here $\ld(C,\pi)$ denotes a scalar depending only on $\pi$ (see Subsection~\ref{subsec:bundles}).
In the case at hand this formula reduces to
\begin{equation}\label{eq0:d_lambda(tau,Gamma,ld)}
d_{\lambda}(\tau, \G)=\sum_{\pi\in {\widehat G}_{\tau,\ld} } n_\G(\pi).
\end{equation}
where ${\widehat G}_{\tau,\ld}= {\widehat G}_\tau \cap \{\pi \in \widehat G: \lambda(C,\pi) =\lambda \}$.
Therefore,   $\spec_\tau(\G\ba X)$ is determined by the multiplicities $n_\G(\pi)$ for $\pi$ in the sets ${\widehat G}_{\tau,\ld}$.

We will use a general approach that applies to the three cases to be considered.
In light of formula \eqref{eq0:d_lambda(tau,Gamma,ld)}, the goal is to determine the sets ${\widehat G}_{\tau}$, then compute $\ld(C,\pi)$ in each case, and then, for each given $\ld \in \R$, to find the set ${\widehat G}_{\tau,\ld}$. For general $\tau \in \widehat K$ this can be complicated, but it can be carried out for some choices of $\tau$.

As a consequence of the method, by choosing $\tau=\tau_p$, we will give a generalization of results in \cite{Pe2} for the $p$-spectrum of the Hodge-Laplace operator of $\G\ba X$, comparing $p$-isospectrality with $\tau_p$-equivalence. We shall see that, for nonpositive curvature, $p$-isospectrality is far from implying $\tau_p$-equivalence, but a variant of Pesce's result remains true.
We shall consider the three cases: spherical, flat and hyperbolic space forms separately, although they will all share common features.

The case when $X$ has positive curvature has been studied by several authors. Most of the results in this case are included or implicit in the work of
Ikeda-Taniguchi \cite{IkTa}, Ikeda \cite{Ik}, Pesce \cite{Pe1} \cite{Pe2}, Gornet-McGowan \cite{GM} and others.
However, we will give a comprehensive presentation that allows us to extend the results to other choices of $\tau$ (see Proposition~\ref{prop3:mu=2}) and illuminates the cases when the curvature is zero and negative.
%For a given $p$, we shall determine the sets $\widehat G_{\tau_p}$ explicitly and as a consequence we will show that, for each eigenvalue, there is at most one irreducible representation $\pi$ of $G$ that may contribute to formula \eqref{eq0:d_lambda(tau,Gamma)}.
%We have:

\begin{thm}\label{thm0:d_lambda_cpto}
Let $\Gamma$ be a finite subgroup of $\Ot(n+1)$ acting freely on an odd-dimensional sphere $S^{n}$ with $n=2m-1$ and let $0\le p\le n$.

If $\lambda\in\spec_p(\Gamma\ba S^n)$ then $\lambda\in\mathcal E_p \cup \mathcal E_{p+1}$, with $\mathcal E_p$ and $\mathcal E_{p+1}$ disjoint sets, where $\mathcal E_0=\mathcal E_{n+1}=\emptyset$ and for $1\leq p\leq n$,
\begin{equation}
\mathcal E_p = \{\lambda =k^2 + k(n-1) + (p-1)(n -p):  k\in \N\}.
\end{equation}
Furthermore, for each $\lambda \in \mathcal E_p\cup\mathcal E_{p+1}$, we have
\begin{align*}
d_\lambda (p,\G) =
\begin{cases}
    n_\Gamma(\pi_{\Lambda_{k,p},  \delta})  \quad&\text{if }\lambda\in\mathcal E_p,\\
    n_\Gamma(\pi_{\Lambda_{k,p+1}, \delta}) \quad&\text{if }\lambda\in\mathcal E_{p+1}.
\end{cases}
\end{align*}
Here $\pi_{\Lambda_{k,p},\delta}$ is a specific irreducible representation of $\Ot(n+1)$ (see \eqref{eq2:rep_O(2m)_conj}, \eqref{eq2:rep_O(2m)_no_conj})
 where $\pi_{\Lambda_{k,p},\delta}|_{\SO(n+1)}$ has highest weight $\Lambda_{k,p}=k\varepsilon_1+\varepsilon_2+\dots+\varepsilon_p$, %and $k\in\N$,
\begin{equation}\label{eq:klambda}
k=k_{p,\lambda} = -(m-1) + \sqrt{(m-1)^2 + \lambda - (p-1)(n-p)},
\end{equation}
and $\delta\in\{0,\pm1\}$ is uniquely determined by $\lambda$.
%and similarly for $\pi_{\Lambda_{k,p+1},\delta}$.

%with a similar expression for   $k'=k_{p+1,\lambda'}$, if $\ld' \in \mathcal E_{p+1}$.

In particular, if $\lambda\in\spec_0(\Gamma\ba S^n)$ then $\lambda\in\{k(k+n-1):k\in \N_0\}$ with
\begin{equation}
d_\lambda(0,\G)=n_\G(\pi_{k \varepsilon _1,\delta}),
\end{equation}
where $\pi_{k \varepsilon _1,\delta}$ restricted to $\SO(n+1)$ has highest weight $k\varepsilon_1$ (see Subsection~\ref{subsec:rep_ortho}).
\end{thm}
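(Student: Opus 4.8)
The plan is to identify, for the odd sphere $S^n$ with $n=2m-1$, which irreducible unitary representations $\pi$ of $G=\Ot(n+1)$ satisfy $\Hom_{\Ot(n)}(\tau_p,\pi)\ne0$, and for each such $\pi$ to compute the scalar $\ld(C,\pi)$ so that formula \eqref{eq0:d_lambda(tau,Gamma,ld)} can be applied. First I would recall the classification of $\widehat{\Ot(n+1)}$ in terms of highest weights of $\SO(n+1)$ (and the $\delta\in\{0,\pm1\}$ bookkeeping for the two ways an $\SO(n+1)$-representation extends, or fails to extend, to $\Ot(n+1)$), which is done in Subsection~\ref{subsec:rep_ortho}. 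Then I would invoke the branching rule from $\Ot(n+1)$ to $\Ot(n)$ to determine exactly which $\pi$ contain $\tau_p$ upon restriction: the key point is that $\tau_p$ has highest weight $\varepsilon_1+\dots+\varepsilon_p$ for $\SO(n)$, and branching forces the $\SO(n+1)$-highest weight of $\pi$ to be of the form $k\varepsilon_1+\varepsilon_2+\dots+\varepsilon_q$ with $q\in\{p,p+1\}$ (this ``interlacing produces at most a first-row bump'' phenomenon is the structural heart of the statement, and explains why only two families $\mathcal E_p$, $\mathcal E_{p+1}$ appear). I would write $\pi_{\Lambda_{k,q},\delta}$ for these representations.

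Next I would compute $\ld(C,\pi)$ for $\pi=\pi_{\Lambda_{k,q},\delta}$. Since $X=S^n=\SO(n+1)/\SO(n)$ is a rank-one symmetric space, $\ld(C,\pi)$ is, up to the normalization fixed in Subsection~\ref{subsec:bundles}, the difference of Casimir eigenvalues: it equals $\langle\Lambda_{k,q}+\rho_{n+1},\Lambda_{k,q}+\rho_{n+1}\rangle-\langle\rho_{n+1},\rho_{n+1}\rangle$ minus the Casimir scalar of $\tau_p$ on $\SO(n)$, which is a routine bilinear-form calculation with $\rho_{n+1}=(m-\tfrac12,m-\tfrac32,\dots,\tfrac12)$ in the $\varepsilon_i$ coordinates. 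Carrying this out for $q=p$ gives $\ld=k^2+k(n-1)+(p-1)(n-p)$, i.e. the defining equation of $\mathcal E_p$, and for $q=p+1$ it gives the defining equation of $\mathcal E_{p+1}$; solving the quadratic in $k$ yields \eqref{eq:klambda}. The disjointness of $\mathcal E_p$ and $\mathcal E_{p+1}$, and the fact that $\delta$ is then forced by $\lambda$, follow by comparing these two one-parameter families of eigenvalues and checking there is no overlap for $1\le p\le n$ (a short monotonicity/parity argument). With this in hand, \eqref{eq0:d_lambda(tau,Gamma,ld)} collapses: for $\lambda\in\mathcal E_p$ the set $\widehat G_{\tau_p,\lambda}$ consists of the single representation $\pi_{\Lambda_{k,p},\delta}$ (and similarly $\pi_{\Lambda_{k,p+1},\delta}$ for $\lambda\in\mathcal E_{p+1}$), so $d_\lambda(p,\G)=n_\G(\pi_{\Lambda_{k,p},\delta})$ or $n_\G(\pi_{\Lambda_{k,p+1},\delta})$ respectively. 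Finally the $p=0$ case is the specialization $\tau_0=\mathbf 1$: then $q$ can only be $0$ or $1$, but $q=0$ forces the weight $k\varepsilon_1$ on $\SO(n+1)$ for all $k\in\N_0$ while the ``$q=1$'' contribution reorganizes into the same family, giving the familiar eigenvalues $k(k+n-1)$ with $d_\lambda(0,\G)=n_\G(\pi_{k\varepsilon_1,\delta})$; this recovers Pesce's function case.

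The main obstacle I expect is the representation-theoretic bookkeeping for $\Ot(n+1)$ versus $\SO(n+1)$ in even rank $m$: when $n+1=2m$, some $\SO(n+1)$-irreducibles (those with last coordinate nonzero, or more precisely those not fixed by the outer automorphism) induce irreducibly to $\Ot(n+1)$ while others split, and the branching to $\Ot(n)$ must be tracked through these extensions — this is precisely what the parameter $\delta\in\{0,\pm1\}$ encodes. The subtlety is that $\tau_p$ and $\tau_{n+1-p}$ are related by the determinant twist on $\Ot(n)$, and one must be careful that $\mathcal E_p$ and $\mathcal E_{p+1}$ really do exhaust the spectrum and that the middle-degree cases (where $p$ is near $m$) do not produce extra coincidences; establishing the disjointness of $\mathcal E_p$ and $\mathcal E_{p+1}$ cleanly, together with the claim that each of these $\widehat G_{\tau_p,\lambda}$ is a singleton, is where the argument needs the most care. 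Everything else — the Casimir computation, the quadratic formula for $k$, and the collapse of \eqref{eq0:d_lambda(tau,Gamma,ld)} — is routine once the branching picture is pinned down.
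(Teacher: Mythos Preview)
Your strategy is the same as the paper's: determine $\widehat G_{\tau_p}$ by branching from $\Ot(2m)$ to $\Ot(2m-1)$ (this is the paper's Proposition~\ref{prop3:G_tau_p}, built on Lemma~\ref{lem2:tau_p<pi|_O(2m-1)}), compute the Casimir eigenvalue of each $\pi_{\Lambda_{k,q},\delta}$, prove $\mathcal E_p\cap\mathcal E_{p+1}=\emptyset$, and then read off the multiplicity from \eqref{eq0:d_lambda(tau,Gamma,ld)}. The disjointness argument in the paper is exactly the short size comparison you allude to: from $(k-h)(k+h+n-1)=n-2p$ one sees the left-hand side is at least $n+1$ once $k>h\ge1$, a contradiction.

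However, your Casimir step is misstated in two places, and as written it would not produce the claimed eigenvalues. First, in the paper's framework (Proposition~\ref{prop2:Laplace=Casimir}) the Hodge--Laplace operator \emph{is} the action of the Casimir element $C$ of $G=\Ot(n+1)$; there is no subtraction of the $K$-Casimir of $\tau_p$. What you describe is the connection (Bochner) Laplacian, which differs from the Hodge--Laplace by a curvature term; the correct scalar is simply $\lambda(C,\pi_{\Lambda,\delta})=\langle\Lambda,\Lambda+2\rho\rangle$. Second, since $n+1=2m$, the relevant algebra is $\so(2m)$ of type $D_m$, so $\rho=(m-1,m-2,\dots,1,0)$ with integer entries, not the half-integer vector $(m-\tfrac12,\dots,\tfrac12)$ you wrote (that is the $\rho$ for $\so(2m+1)$). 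With these two corrections the computation goes through exactly as in the paper and yields $k^2+k(n-1)+(p-1)(n-p)$ for $q=p$; carried out as you wrote it, it does not.
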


As a direct consequence:

\begin{coro}\label{coro0:compacto}
Let $\Gamma_1$, $\Gamma_2$ be  finite subgroups of $\Ot(n+1)$ acting freely on $X=S^n$. Then
\begin{enumerate}
\item[(i)] (see  \cite{IkTa}, \cite{Pe2}, \cite{GM}) $\Gamma_1\ba X$ and $\Gamma_2\ba X$ are $p$-isospectral if and only if ${\Gamma_1}$ and ${\Gamma_2}$ are $\tau_p$-equivalent.

\item[(ii)] %For any $0\leq p\leq n$, 
If $\Gamma_1\ba X$ and $\Gamma_2\ba X$ are  $p-1$-isospectral and $p+1$-isospectral, then they are $p$-isospectral.
%    If $p=n$, the manifolds are $n-1$-isospectral and have the same $n^{\textrm{th}}$-betti number, then they are $n$-isospectral.
\end{enumerate}
\end{coro}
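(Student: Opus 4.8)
The plan is to read off both parts directly from Theorem~\ref{thm0:d_lambda_cpto} and formula~\eqref{eq0:d_lambda(tau,Gamma,ld)}, treating the corollary as a translation dictionary between the $p$-spectrum and the numbers $n_\G(\pi)$. First I would dispose of even $n$: a free action of a finite group $\G$ on $S^n$ with $n$ even makes $S^n\to\G\ba S^n$ a $|\G|$-sheeted cover, so $\chi(\G\ba S^n)=2/|\G|\in\Z$ forces $|\G|\le 2$, hence $\G$ is trivial or $\{\pm\Id\}$, and both assertions then hold for trivial reasons. So assume $n=2m-1$. I would set $\mathcal F_q:=\{\pi_{\Lambda_{k,q},\delta}\}$, the family of $\Ot(n+1)$-representations occurring in Theorem~\ref{thm0:d_lambda_cpto}, indexed as there, whose Casimir scalars $\ld(C,\pi)$ run bijectively over $\mathcal E_q$ as $k$ varies (by \eqref{eq:klambda}, the dependence on $k$ being strictly monotone, and $\delta$ being determined by $\lambda$), with the convention $\mathcal F_0=\mathcal F_{n+1}=\emptyset$. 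I would then use two facts: (a) by \eqref{eq0:d_lambda(tau,Gamma,ld)}, equivalently by Proposition~\ref{prop2:tau-equiv=>tau-iso}, $\tau$-equivalence of $\G_1,\G_2$ implies $\tau$-isospectrality of $\G_1\ba X$ and $\G_2\ba X$ for every representation $\tau$ of $\Ot(n)$; and (b) the identity $\widehat G_{\tau_q}=\mathcal F_q\sqcup\mathcal F_{q+1}$ for $0\le q\le n$, which is part of the determination of $\widehat G_{\tau_q}$ underlying Theorem~\ref{thm0:d_lambda_cpto} (a branching-rule computation for the restriction $\Ot(n+1)\to\Ot(n)$, via $\bigwedge^q(\R^{n+1})_\C|_{\Ot(n)}\cong\tau_q\oplus\tau_{q-1}$), and in which $\mathcal F_q$ is exactly the ``$\mathcal E_q$-part'' shared by $\widehat G_{\tau_{q-1}}$ and $\widehat G_{\tau_q}$.

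For (i), the implication ``$\tau_p$-equivalent $\Rightarrow$ $p$-isospectral'' is fact (a) with $\tau=\tau_p$. For the converse, suppose $\G_1\ba X$ and $\G_2\ba X$ are $p$-isospectral, i.e.\ $d_\lambda(p,\G_1)=d_\lambda(p,\G_2)$ for all $\lambda$. Given $\pi\in\widehat G_{\tau_p}=\mathcal F_p\sqcup\mathcal F_{p+1}$, put $\lambda=\ld(C,\pi)$; then $\lambda\in\mathcal E_p$ when $\pi\in\mathcal F_p$ and $\lambda\in\mathcal E_{p+1}$ when $\pi\in\mathcal F_{p+1}$, and in either case Theorem~\ref{thm0:d_lambda_cpto} yields $d_\lambda(p,\G_i)=n_{\G_i}(\pi)$, whence $n_{\G_1}(\pi)=n_{\G_2}(\pi)$. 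Since $\pi$ was an arbitrary element of $\widehat G_{\tau_p}$, this is precisely the statement that $\G_1,\G_2$ are $\tau_p$-equivalent.

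For (ii), I would bootstrap from (i). Assume first $1\le p\le n-1$, with $\G_1\ba X,\G_2\ba X$ being $(p-1)$- and $(p+1)$-isospectral. By (i) they are $\tau_{p-1}$- and $\tau_{p+1}$-equivalent, so $n_{\G_1}$ and $n_{\G_2}$ agree on $\widehat G_{\tau_{p-1}}\cup\widehat G_{\tau_{p+1}}=(\mathcal F_{p-1}\sqcup\mathcal F_p)\cup(\mathcal F_{p+1}\sqcup\mathcal F_{p+2})$, hence in particular on $\mathcal F_p\cup\mathcal F_{p+1}=\widehat G_{\tau_p}$; thus $\G_1,\G_2$ are $\tau_p$-equivalent and, by (i) once more, $p$-isospectral. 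The boundary cases use $\mathcal F_0=\mathcal F_{n+1}=\emptyset$: for $p=n$ the hypothesis of $(n+1)$-isospectrality is vacuous since $\bigwedge^{n+1}(\R^n)=0$, and $(n-1)$-isospectrality alone gives agreement of $n_{\G_1},n_{\G_2}$ on $\mathcal F_{n-1}\cup\mathcal F_n\supseteq\mathcal F_n=\mathcal F_n\sqcup\mathcal F_{n+1}=\widehat G_{\tau_n}$; for $p=0$ one likewise uses only $1$-isospectrality together with $\widehat G_{\tau_0}=\mathcal F_0\sqcup\mathcal F_1=\mathcal F_1$.

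I do not expect a genuine obstacle inside the corollary: once Theorem~\ref{thm0:d_lambda_cpto} is granted, both parts are bookkeeping with the decomposition $\widehat G_{\tau_q}=\mathcal F_q\sqcup\mathcal F_{q+1}$ and the bijection $\mathcal F_q\leftrightarrow\mathcal E_q$. The one place where real representation theory enters is fact (b) — that \emph{every} $\pi\in\widehat G$ with $\Hom_{\Ot(n)}(\tau_q,\pi|_{\Ot(n)})\ne0$ lies in $\mathcal F_q\cup\mathcal F_{q+1}$, and that for each admissible $k$ exactly one of the (at most two) $\Ot(n+1)$-representations restricting to the $\SO(n+1)$-representation of highest weight $\Lambda_{k,q}$ belongs to $\widehat G_{\tau_q}$. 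This is, however, exactly the branching analysis carried out to prove Theorem~\ref{thm0:d_lambda_cpto}, so here it may be taken as given.
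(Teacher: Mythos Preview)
Your approach is essentially the paper's: use Theorem~\ref{thm0:d_lambda_cpto} to recover each $n_\G(\pi)$ from a single eigenvalue multiplicity, and then for (ii) use $\widehat G_{\tau_p}\subset\widehat G_{\tau_{p-1}}\cup\widehat G_{\tau_{p+1}}$ via part (i). Part (i) and the interior range $1\le p\le n-1$ of (ii) are fine and match the paper verbatim.

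There is, however, a small but genuine gap in your boundary cases $p=0$ and $p=n$. Your ``fact (b)'' $\widehat G_{\tau_q}=\mathcal F_q\sqcup\mathcal F_{q+1}$ cannot hold uniformly with a single definition of $\mathcal F_1$ (and $\mathcal F_n$). Concretely, the trivial representation $\pi_{0,\delta}$ lies in $\widehat G_{\tau_0}$ but \emph{not} in $\widehat G_{\tau_1}$ (its restriction to $K$ is $\tau_0$, not $\tau_1$); see Proposition~\ref{prop3:G_tau_p}, where $\widehat G_{\tau_0}$ allows $k\in\N_0$ while $\widehat G_{\tau_1}$ requires $k\in\N$. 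So if $\mathcal F_1$ includes $k=0$ then $\widehat G_{\tau_1}\ne\mathcal F_1\sqcup\mathcal F_2$, and if it excludes $k=0$ then $\widehat G_{\tau_0}\ne\mathcal F_1$. Either way, $1$-isospectrality alone does not give agreement of $n_{\G_1},n_{\G_2}$ on all of $\widehat G_{\tau_0}$; the same issue occurs at $p=n$ with the determinant representation. The paper closes this gap by observing that $n_{\G_i}(\pi_{0,\delta})=\beta_0(\G_i\ba S^n)=1$ and $n_{\G_i}(\pi_{0,-\delta})=\beta_n(\G_i\ba S^n)=1$ (the latter because for odd $n$ any element of $\Ot(n+1)$ with determinant $-1$ fixes a point of $S^n$, so $\G_i\subset\SO(n+1)$ and the quotient is orientable). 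Add this one-line observation and your boundary cases go through.
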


In \cite{Ik}, Ikeda constructed for each $p$, lens spaces   $q$-isospectral for every $0 \le q \le p$ but not $p+1$-isospectral.
More recently, Gornet and McGowan \cite{GM}  gave a  very useful survey on the results of Pesce and Ikeda and,
by  computer methods using Ikeda's approach,   exhibited a rich list of lens spaces  that are $p$-isospectral for some values of $p$ only.
Their list (see p.~274) again shows no simple `holes' in the set of values of $p$ for which there is  $p$-isospectrality.
This is consistent with the assertion in (ii) of the corollary that shows that this is valid in general for all spherical space forms.  As noted in \cite{GM}, the examples in \cite{GM} are  $\tau_p$-equivalent for these values of $p$ only.

\medskip

 By following the general method described above we shall prove the following results
for flat and negative curvature compact locally symmetric spaces:

\begin{thm}\label{thm0:d_lambda_flat}
Let $\Gamma$ be a Bieberbach group, that is, $\Gamma$ is a discrete, cocompact subgroup of $\Iso(\R^n)\simeq\Ot(n)\ltimes\R^n$ acting without fixed points on $\R^n$. Let $\Lambda$ denote the translation lattice of $\G$ and let $\Lambda^*$ be the dual lattice of $\Lambda$.
The multiplicity of the eigenvalue $\lambda =4\pi^2\|v\|^2$,   $v \in \Lambda^*$, is given by
\begin{equation}
d_\lambda(\tau_p,\Gamma) =
\begin{cases}
n_\Gamma(\widetilde\tau_p) = \beta_p(\G\backslash \R^n) &\;\text{if }\lambda=0,\\
n_\Gamma(\pi_{\sigma_p,\sqrt\lambda/2\pi})+ n_\Gamma(\pi_{\sigma_{p-1},\sqrt\lambda/2\pi})
    &\;\text{if }\lambda>0.
\end{cases}
\end{equation}
Here   $\sigma_p$ is the $p$-exterior representation of $\Ot(n-1)$ and
$\widetilde\tau_p$ and $\pi_{\sigma_p,r}$ are certain unitary irreducible representations of $\Iso(\R^n)$ (see~\eqref{eq4:rep_E(n)}).
\end{thm}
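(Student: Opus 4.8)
The plan is to apply the multiplicity formula \eqref{eq0:d_lambda(tau,Gamma,ld)} with $\tau=\tau_p$, so that everything comes down to determining the set $\widehat G_{\tau_p}$ together with the scalar $\lambda(C,\pi)$ for each $\pi$ in it. First I would recall (this underlies \eqref{eq4:rep_E(n)}) the Mackey description of $\widehat G$ for $G=\Iso(\R^n)\simeq\Ot(n)\ltimes\R^n$: identifying $\widehat{\R^n}$ with $\R^n$, the $\Ot(n)$-orbits on $\widehat{\R^n}$ are the spheres of radius $r\ge 0$, with compact little groups $\Ot(n)$ over the origin and $\Ot(n-1)$ (the stabilizer of a unit covector) over a sphere of radius $r>0$. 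Hence $\widehat G$ consists of the pullbacks $\widetilde\mu$ of $\mu\in\widehat{\Ot(n)}$ (with $\lambda(C,\widetilde\mu)=0$) and the induced representations $\pi_{\sigma,r}$ with $r>0$ and $\sigma\in\widehat{\Ot(n-1)}$ (with $\lambda(C,\pi_{\sigma,r})=4\pi^2 r^2$, see below).

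To find $\widehat G_{\tau_p}$, note that $\widetilde\mu|_K=\mu$ while $\pi_{\sigma,r}|_K\cong\operatorname{Ind}_{\Ot(n-1)}^{\Ot(n)}\sigma$, independently of $r$. By Frobenius reciprocity, $\Hom_K(\tau_p,\widetilde\mu)\ne0$ iff $\mu\cong\tau_p$, and $\Hom_K(\tau_p,\pi_{\sigma,r})\ne0$ iff $\sigma$ occurs in $\tau_p|_{\Ot(n-1)}$. Writing $\R^n=\R^{n-1}\oplus\R$ as $\Ot(n-1)$-modules (the last summand trivial) gives the branching $\bigwedge^p(\R^n)_\C|_{\Ot(n-1)}\cong\sigma_p\oplus\sigma_{p-1}$, where $\sigma_q=\bigwedge^q(\R^{n-1})_\C$, with the convention $\sigma_q=0$ unless $0\le q\le n-1$. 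Invoking the classical facts that $\bigwedge^q(\R^m)_\C$ is irreducible over $\Ot(m)$ and that $\bigwedge^0(\R^m)_\C,\dots,\bigwedge^m(\R^m)_\C$ are pairwise inequivalent, we obtain $\widehat G_{\tau_p}=\{\widetilde{\tau_p}\}\cup\{\pi_{\sigma_p,r},\,\pi_{\sigma_{p-1},r}:r>0\}$ with $\dim\Hom_K(V_{\tau_p}^*,H_\pi)=1$ for each such $\pi$ (using $\tau_p^*\cong\tau_p$); this multiplicity-one property is exactly what reduces the general multiplicity formula to \eqref{eq0:d_lambda(tau,Gamma,ld)}.

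It then remains to insert this into \eqref{eq0:d_lambda(tau,Gamma,ld)}. Since $\G\ba\R^n$ is finitely covered by the flat torus $\Lambda\ba\R^n$, whose spectrum on $p$-forms is $\{4\pi^2\|v\|^2:v\in\Lambda^*\}$, every eigenvalue is of that form. For $\lambda=4\pi^2\|v\|^2>0$ we get $\widehat G_{\tau_p,\lambda}=\{\pi_{\sigma_p,r},\pi_{\sigma_{p-1},r}\}$ with $r=\sqrt\lambda/2\pi$, hence $d_\lambda(\tau_p,\G)=n_\G(\pi_{\sigma_p,r})+n_\G(\pi_{\sigma_{p-1},r})$. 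For $\lambda=0$, $\widehat G_{\tau_p,0}=\{\widetilde{\tau_p}\}$, so $d_0(\tau_p,\G)=n_\G(\widetilde{\tau_p})$; on the other hand the $0$-eigenspace of the Hodge--Laplacian on $p$-forms is the space of complex harmonic $p$-forms, whose dimension is $\beta_p(\G\ba\R^n)$ by Hodge theory. Comparing the two gives the first line of the asserted formula.

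The Mackey parametrization of $\widehat G$, the irreducibility and inequivalence of the exterior powers, the branching law, and Hodge theory are all classical inputs. The delicate step is the computation $\lambda(C,\pi_{\sigma,r})=4\pi^2 r^2$ promised above, carried out via Subsection~\ref{subsec:bundles}: one must check that the representation-theoretic scalar agrees with the geometric eigenvalue of the Hodge--Laplacian, including the normalizing constant $4\pi^2$, and in particular that it does not depend on $\sigma$. This independence reflects the vanishing of the curvature term in the Weitzenböck formula, i.e.\ the flatness of the canonical connection on $E_{\tau_p}$ over the flat space $\R^n$.
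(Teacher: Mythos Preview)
Your argument is essentially the paper's own: Mackey's description of $\widehat G$, Frobenius reciprocity together with the branching $\tau_p|_{\Ot(n-1)}\cong\sigma_p\oplus\sigma_{p-1}$ to identify $\widehat G_{\tau_p}$ with multiplicity one, the computation of $\lambda(C,\pi_{\sigma,r})$, and then Proposition~\ref{prop2:multip_lambda}. The only slip is a sign: with $C=\sum e_i^2$ as in Subsection~\ref{subsec:bundles} one gets $\lambda(C,\pi_{\sigma,r})=-4\pi^2 r^2$ (so that the Hodge--Laplace eigenvalue is $-\lambda(C,\pi)$), exactly as in the paper's Lemma~\ref{lem4:Casimir_on_pi}; this does not affect the conclusion.
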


\begin{thm}\label{thm0:d_lambda_neg}
Let  $G=\SO(n,1)$, $K=\Ot(n)$, $\G\subset G$ be a discrete subgroup acting
without fixed points on $\Hy^n$.

If $0\le p \le n$, and $\ld =0$, then  %and let $\lambda \in \spec_p(\Gamma\backslash \Hy^n)$.
%The multiplicity of the eigenvalue $\lambda=0$ of the Hodge-Laplace
%operator $\Delta_{\tau_p,\Gamma}$on $p$-forms is given by
\begin{equation*}
d_0(\tau_p,\G) =\beta_p(\G\backslash \Hy^n)=
\begin{cases}
  n_\G\left(J_{\sigma_p, \rho_p}\right) + n_\G\left(J_{\sigma_{p-1},
\rho_{p-1}}\right)
    &\text{if } p\neq\frac{n}{2},\\[1mm]
  n_\G\left(D_\frac n2^+ \oplus D_\frac n2^-\right)
    &\text{if } p=\frac{n}{2}.
\end{cases}
\end{equation*}

If $\lambda\neq0$, then
\begin{equation*}
d_\lambda (\tau_p,\Gamma)=
\begin{cases}
n_\G\left(\pi_{\sigma_p,\sqrt{\rho_{p}^2-\lambda}}\right) +
n_\G\left(\pi_{\sigma_{p-1},\sqrt{\rho_{p-1}^2-\lambda}}\right)
    &\text{if }p\neq\frac{n}{2},\\[2mm]
n_\G\left(\pi_{\sigma_{m},\sqrt{1/4-\lambda}}\right) +
n_\G\left(\pi_{\sigma_{m-1},\sqrt{1/4-\lambda}}\right)
    &\text{if }p=\frac{n}{2}=m.\\
\end{cases}
\end{equation*}
In the expressions above, $\sigma_p$ is the $p$-exterior representation of
$M\simeq\Ot(n-1)$, $\rho_p:=\frac{n-1}2-\min(p,n-1-p)$ and $\pi_{\sigma_p,
\nu}$, $J_{\sigma_p, \nu}$ and $D_\frac n2^+ \oplus D_\frac n2^-$ denote
specific unitary irreducible representations of $G$ (see
Section~\ref{sec:negative}).
 \end{thm}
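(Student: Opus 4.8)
The strategy is to specialize the general multiplicity formula \eqref{eq0:d_lambda(tau,Gamma,ld)} to the rank-one case $G=\SO(n,1)$, $K=\Ot(n)$, $\tau=\tau_p$, following the same three-step scheme used for spheres and flat manifolds: (1) identify the set $\widehat G_{\tau_p}$ of unitary irreducible representations $\pi$ with $\Hom_K(\tau_p,\pi)\ne 0$; (2) compute the Casimir scalar $\ld(C,\pi)$ for each such $\pi$; (3) for fixed $\ld$, read off which $\pi$ lie in $\widehat G_{\tau_p,\ld}$. Here $C$ is normalized so that $\ld(C,\pi)$ agrees with the eigenvalue of the Hodge--Laplace operator $\Delta_{\G,\tau_p}$.

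\smallskip

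First I would recall the classification of $\widehat G$ for $G=\SO(n,1)$ and single out those $\pi$ which are $K$-admissible for $\tau_p$. Concretely, by branching $\tau_p|_M$ where $M\simeq\Ot(n-1)$ is the standard Levi factor, one has $\tau_p|_M \simeq \sigma_p\oplus\sigma_{p-1}$ (the familiar exterior-algebra branching $\bigwedge^p\R^n|_{\R^{n-1}}\simeq \bigwedge^p\R^{n-1}\oplus\bigwedge^{p-1}\R^{n-1}$). By Frobenius reciprocity for principal and complementary series induced from $MAN$, the representations $\pi$ with $\Hom_K(\tau_p,\pi)\ne0$ are exactly: the (unitary and nonunitary, i.e.\ complementary-series) principal series $\pi_{\sigma_p,\nu}$ and $\pi_{\sigma_{p-1},\nu}$ built from $\sigma_p,\sigma_{p-1}$; their Langlands quotients $J_{\sigma_p,\rho_p}$, $J_{\sigma_{p-1},\rho_{p-1}}$ (the finite-dimensional or cohomological pieces occurring at the distinguished parameters $\nu=\rho_p$); and, when $n=2m$ and $p=m$, the two discrete series $D_{n/2}^{\pm}$ appearing because $\sigma_m$ splits. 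The scalar $\rho_p=\tfrac{n-1}{2}-\min(p,n-1-p)$ is precisely the value of the continuous parameter at which the corresponding Laplace eigenvalue vanishes, which is why $\ld=0$ is the borderline case.

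\smallskip

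Second, I would compute $\ld(C,\pi_{\sigma_p,\nu})$ via the standard formula for the Casimir eigenvalue on a (generalized) principal series: it equals $\langle\Ld_{\sigma_p}+\rho_M,\Ld_{\sigma_p}+\rho_M\rangle - \langle\rho_M,\rho_M\rangle + (\rho^2 - \nu^2)$ up to the chosen normalization, where $\rho$ is the half-sum of positive roots for $(G,A)$ and $\rho_M$ for $M$. Plugging in the highest weight of $\sigma_p$ and simplifying, this collapses to $\ld = \rho_p^2-\nu^2$ on the $\sigma_p$-series and $\ld=\rho_{p-1}^2-\nu^2$ on the $\sigma_{p-1}$-series; for the discrete series $D_{m}^{\pm}$ at $p=m=n/2$ one gets $\ld=0$, matching $\rho_m=\tfrac14$... (more precisely $\rho_m^2=\tfrac14$ when $n=2m$, giving the $\sqrt{1/4-\lambda}$ appearing in the statement). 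Inverting, for $\lambda\ne0$ the parameter is $\nu=\sqrt{\rho_p^2-\lambda}$ (resp.\ $\sqrt{\rho_{p-1}^2-\lambda}$), which is either real (tempered or complementary series, when $\lambda\le\rho_p^2$) or purely imaginary — but in all cases this picks out a single well-defined $\pi$ in $\widehat G$, so $\widehat G_{\tau_p,\lambda}=\{\pi_{\sigma_p,\sqrt{\rho_p^2-\lambda}},\ \pi_{\sigma_{p-1},\sqrt{\rho_{p-1}^2-\lambda}}\}$ (with one term dropping when $p$ or $p-1$ falls outside the admissible range, e.g.\ at $p=0$). Summing $n_\G(\pi)$ over this two-element set gives the claimed formula for $\lambda\ne0$. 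For $\lambda=0$ the same analysis applies but now the relevant $\pi$ at parameter $\nu=\rho_p$ is the Langlands quotient $J_{\sigma_p,\rho_p}$ (a reducibility point of the principal series), hence the $J$'s in the statement; that $d_0(\tau_p,\G)=\beta_p(\G\ba\Hy^n)$ is then just Hodge theory, since harmonic $p$-forms are the $\lambda=0$ eigenspace. The $p=n/2$ refinement is handled separately because there the $\sigma_{n/2}$-principal series at $\nu=0$ splits into $D_{n/2}^+\oplus D_{n/2}^-$.

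\smallskip

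The main obstacle I anticipate is step (1) together with the bookkeeping at the reducibility points: one must be careful to match the $\ld=0$ contributions with the correct Langlands quotients $J_{\sigma_p,\rho_p}$ rather than the (nonunitary, hence absent from $L^2(\G\ba G)$) full induced modules, and to treat correctly the ``edge'' values of $p$ (namely $p=0$, $p=n$, and $p$ near $n/2$) where $\sigma_p$ or $\sigma_{p-1}$ either does not occur, coincides with $\sigma_{n-1-p}$, or splits off discrete series. Verifying that the list in step (1) is exhaustive — i.e.\ that no other unitary $\pi$ (complementary series outside the stated range, or isolated unitary representations) satisfies $\Hom_K(\tau_p,\pi)\ne0$ with the required Casimir value — requires invoking the full unitary dual of $\SO(n,1)$ and the branching rules to $\Ot(n)$; this is where I would lean on the detailed representation-theoretic setup assembled in Section~\ref{sec:negative}. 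The Casimir computation in step (2), by contrast, is a routine (if slightly tedious) weight-lattice calculation.
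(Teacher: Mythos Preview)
Your proposal is correct and follows essentially the same route as the paper: determine $\widehat G_{\tau_p}$ via the branching $\tau_p|_M=\sigma_p\oplus\sigma_{p-1}$ and Frobenius reciprocity (Proposition~\ref{prop5:hatGtau_neg}), compute $\ld(C,\pi_{\sigma_p,\nu})=\rho_p^2-\nu^2$ (Proposition~\ref{prop:Casimireigenvalue}), and then read off $\widehat G_{\tau_p,\ld}$ case by case. One small correction to your bookkeeping at $p=m=n/2$: the representation $\sigma_m$ of $M\simeq\Ot(2m-1)$ does not split, and the discrete series $D_m^\pm$ arise as subrepresentations of the reducible endpoint $\pi_{\sigma_m,1/2}$ (not at $\nu=0$), which is exactly why $\rho_m=\tfrac12$ and the formula carries the $\sqrt{1/4-\lambda}$.
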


In the proofs of Theorem~\ref{thm0:d_lambda_flat} and Theorem~\ref{thm0:d_lambda_neg} we use  the description of the unitary duals of $G$ in terms of induced representations.
It will turn out that, generically, there will be at most two irreducible representations in $\widehat G$ contributing to the multiplicity of a given eigenvalue $\ld$ and these multiplicities will be linked to each other for $p$ and $p+1$.
Using this fact, one first shows that 0-isospectrality implies $\tau_0$-equivalence, then one realizes that 0- and 1-isospectrality, taken together, imply $\tau_0$- and $\tau_1$-equivalence, taken together.
In this way, one can build an interval from 0 to $p$ and obtain the assertion in the following theorem that gives a generalization of Pesce's result for nonpositive curvature space forms.

\begin{thm}\label{thm0:A}
Let $X=G/K$ be a simply connected symmetric space of constant nonpositive curvature where $G$ is the full isometry group of $X$.
Let $\G_1, \G_2$ be discrete cocompact subgroups of $G$ acting without fixed points on $X$.
For each $0\leq p\leq n$, $\Gamma_1\ba X$ and $\Gamma_2\ba X$ are $q$-isospectral for every $0\leq q\leq p$ if and only if $\Gamma_1$ and $\Gamma_2$ are $\tau_q$-equivalent for every $0\leq q\leq p$.
\end{thm}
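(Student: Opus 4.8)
The plan is to prove the two implications separately, using the explicit formulas of Theorem~\ref{thm0:d_lambda_flat} (flat case) and Theorem~\ref{thm0:d_lambda_neg} (hyperbolic case) together with the general fact (Proposition~\ref{prop2:tau-equiv=>tau-iso}) that $\tau$-equivalence implies $\tau$-isospectrality. The nontrivial direction is that $q$-isospectrality for all $0\le q\le p$ forces $\tau_q$-equivalence for all $0\le q\le p$, and this I would prove by induction on $p$.

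\smallskip

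First I would treat the base case $p=0$. Here the relevant formula reads $d_\lambda(\tau_0,\G)=n_\G(\pi_{\sigma_0,\nu(\lambda)})$ for an appropriate (injective in $\lambda$) parameter $\nu$, since the ``$p-1$'' term is vacuous when $p=0$ ($\sigma_{-1}$ contributes nothing, $\beta_{-1}=0$). Thus $0$-isospectrality of $\G_1\ba X$ and $\G_2\ba X$ says precisely $n_{\G_1}(\pi_{\sigma_0,\nu})=n_{\G_2}(\pi_{\sigma_0,\nu})$ for every $\nu$ arising this way, together with $\beta_0(\G_1\ba X)=\beta_0(\G_2\ba X)$ for the trivial representation; and one checks, using the classification of $\widehat G$ via induced representations recalled in Section~\ref{sec:flat}/Section~\ref{sec:negative}, that the representations $\pi_{\sigma_0,\nu}$ (spherical principal/complementary series attached to the trivial $M$-type) are exactly the elements of $\widehat G_{\tau_0}$. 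Hence $0$-isospectrality is equivalent to $\tau_0$-equivalence, which is Pesce's result reproved by our method.

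\smallskip

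For the inductive step, assume $\G_1,\G_2$ are $q$-isospectral for all $0\le q\le p$ and, by induction, $\tau_q$-equivalent for all $0\le q\le p-1$. The formula for $\lambda\ne 0$ gives
\[
d_\lambda(\tau_p,\G)=n_\G\!\left(\pi_{\sigma_p,\nu_p(\lambda)}\right)+n_\G\!\left(\pi_{\sigma_{p-1},\nu_{p-1}(\lambda)}\right),
\]
so $p$-isospectrality together with the already-known equality $n_{\G_1}(\pi_{\sigma_{p-1},\cdot})=n_{\G_2}(\pi_{\sigma_{p-1},\cdot})$ (which follows from $\tau_{p-1}$-equivalence, since $\pi_{\sigma_{p-1},\cdot}\in\widehat G_{\tau_{p-1}}$) yields $n_{\G_1}(\pi_{\sigma_p,\nu})=n_{\G_2}(\pi_{\sigma_p,\nu})$ for all relevant $\nu$. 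It remains to handle the special eigenvalues: $\lambda=0$, where one uses $\beta_p(\G_1\ba X)=\beta_p(\G_2\ba X)$ (a consequence of $p$-isospectrality) minus the already-matched $J_{\sigma_{p-1},\rho_{p-1}}$ contribution to conclude equality of the $J_{\sigma_p,\rho_p}$ multiplicities; and, in the hyperbolic case with $n$ even and $p=n/2$, the discrete-series summand $D^+_{n/2}\oplus D^-_{n/2}$, whose multiplicity is likewise isolated by the same subtraction argument. Finally one must verify that the representations $\{\pi_{\sigma_p,\nu}\}_\nu$ together with the special ones ($J_{\sigma_p,\rho_p}$, or the discrete series) exhaust $\widehat G_{\tau_p}$; this is exactly the determination of the set $\widehat G_{\tau_p}$ carried out in the proofs of Theorems~\ref{thm0:d_lambda_flat} and \ref{thm0:d_lambda_neg}. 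Combining, $\G_1$ and $\G_2$ are $\tau_p$-equivalent, completing the induction.

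\smallskip

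The main obstacle is bookkeeping the ``boundary'' contributions and making sure nothing in $\widehat G_{\tau_p}$ is missed: one needs that the map $\lambda\mapsto\nu_p(\lambda)$ is a bijection from $\spec_p\setminus\{\text{exceptional values}\}$ onto the parameters of the non-tempered/tempered families attached to $\sigma_p$, that the two families attached to $\sigma_p$ and $\sigma_{p-1}$ never accidentally coincide as elements of $\widehat G$ (so that the two summands in $d_\lambda(\tau_p,\G)$ really are independent and the subtraction is legitimate), and that the isolated representations ($J_{\sigma_p,\rho_p}$ and, when $p=n/2$, $D^+_{n/2}\oplus D^-_{n/2}$) are disjoint from all the principal-series parameters. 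Each of these is a finite, case-by-case check using the infinitesimal character / Casimir eigenvalue $\lambda(C,\pi)$ to separate the representations — precisely the quantity the general method computes — so no genuinely new difficulty arises beyond careful case analysis of the flat and $\SO(n,1)$ unitary duals.
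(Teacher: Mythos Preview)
Your proposal is correct and follows essentially the same approach as the paper: the paper packages the inductive step as Lemmas~\ref{lem4:p-1-equiv,p-iso0>p-equiv} and~\ref{lem5:tau_p-1,p-iso=>tau_p} (``$\tau_{p-1}$-equivalence $+$ $p$-isospectrality $\Rightarrow$ $\tau_p$-equivalence''), but the content of those lemmas is exactly the subtraction argument you describe, using the explicit description of $\widehat G_{\tau_p}$ from Lemma~\ref{lem4:G_tau_p} and Proposition~\ref{prop5:hatGtau_neg}. Your closing ``bookkeeping'' worries are handled there; note in particular that the subtraction is legitimate regardless of whether $\pi_{\sigma_p,\cdot}$ and $\pi_{\sigma_{p-1},\cdot}$ coincide (they do not, since $\sigma_p\not\simeq\sigma_{p-1}$), so that concern is not actually needed.
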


From Theorem~\ref{thm0:A} and its proof, one can derive several consequences relating $p$-isospectrality and $\tau_p$-equivalence (see Proposition~\ref{prop4:other_consequences}, Corollary~\ref{coro4:corolarios} in the flat case and in the negative curvature case). Denote by $\beta_p(M)$ the $p$-th Betti number of $M$. If $X=\R^n$ or $X=\Hy^n$, given  $\G_1, \G_2$ discrete cocompact subgroups of $G=\Iso(X)$ acting without fixed points on $X$, we show
\begin{itemize}
\item If $\G_1, \G_2$ are $\tau_1$-equivalent, then $\Gamma_1\ba X$ and $\Gamma_2\ba X$ are $0$ and $1$-isospectral.

Example \ref{exam4:Klein_bottles} gives two $4$-dimensional compact flat manifolds that are $1$-isospectral but not $0$-isospectral, hence $\Gamma_1, \Gamma_2$ are not $\tau_{1}$-equivalent.

\item If $\G_1, \G_2$ are $\tau_{p+1}$-equivalent (or $\tau_{p-1}$-equivalent) and $\Gamma_1\ba X$ and $\Gamma_2\ba X$ are $p$-isospectral, then $\G_1$ and $\G_2$ are $\tau_{p}$-equivalent.

\item If $\G_1, \G_2$ are $\tau_{p-1}$ and $\tau_{p+1}$-equivalent and $\beta_p(\Gamma_1\ba X) = \beta_p(\Gamma_2\ba X)$, then $\G_1, \G_2$ are $\tau_{p}$-equivalent.
    Hence $\Gamma_1\ba X$ and $\Gamma_2\ba X$ are $p-1$,$p$ and $p+1$-isospectral.

\item If $\Gamma_1\ba X$ and $\Gamma_2\ba X$ are  $p$-isospectral for every $p\in\{1,2,\ldots,k\}$ and they are not $0$-isospectral then $\G_1$ $\G_2$ are not $\tau_{p}$-equivalent for any $p\in \{0,1,2,\ldots,k+1\}$.

In Example~\ref{exam4:chino} we give two flat 8-manifolds that are $p$-isospectral for $p=1,2,3,5,6,7$ but not for $0,4,8$, hence the corresponding groups cannot be $\tau_{p}$-equivalent for any $p\in \{0,1,2,\ldots,8\}$.
Similarly, Example~\ref{exam4:n=4_p_odd_iso} gives two flat 4-manifolds that are $p$-isospectral for $p=1,3$ only.
Thus, these pairs of Bieberbach groups cannot be $\tau_p$-equivalent for any $0\le p \le 4$.
\end{itemize}

The examples we give in the flat case show that, a priori,  the theorems can not be improved substantially.
In the hyperbolic case, similar examples should exist but their construction seems much more
difficult. In general, little is known about the multiplicities $n_\G(\pi)$.

The authors wish to thank Peter Gilkey for several useful comments on a first version of this paper.

\section{General setting and preliminaries}\label{sec:preliminares}
Let $X=G/K$ be a simply connected Riemannian symmetric space, where $G$ is the full isometry group of $X$ and $K$ is the isotropy subgroup of a point in $X$.
Let $\Gamma \subset G$ be a discrete \emph{cocompact} subgroup acting freely on $X$, thus the manifold $\Gamma\backslash X$ inherits a locally $G$-invariant Riemannian structure.
We shall be interested in the cases when $X$ is a space of constant sectional curvature:
\begin{itemize}
\item $X=S^n,\qquad (G,K)=\big(\Ot(n+1),\Ot(n)\big);$
\item $X=\R^n,\qquad(G,K)=\big(\Ot(n)\ltimes \R^n,\Ot(n)\big);$
\item $X=\Hy^n,\qquad (G,K)=\big(\SO(n,1),\Ot(n)\big).$
\end{itemize}
The embedding of $\Ot(n)$ in $\SO(n,1)$ in the third case is the standard one in $\mathrm{S}(\Ot(n)\times\Ot(1))$.

\subsection{Homogeneous vector bundles}\label{subsec:bundles}
Given $(\tau,V_\tau)$, a unitary representation  of $K$, we consider the \emph{homogeneous vector bundle} $E_\tau= G \times_\tau V_\tau$ of $X$.
This is constructed as the quotient of $G\times V_\tau$ under the right action of $K$ given as $(x,v)\cdot k= (xk,\tau (k^{-1})v)$.
We denote $[x,v]$ the class of $(x,v)\in G\times V_\tau$ in $E_\tau$ and $(E_\tau)_{xK}=\{[x,v]\in E_\tau:v\in V_\tau\}$ the fiber of $xK$.
The full isometry group $G$ of $X$ acts on $E_\tau$ by $g[x,v]=[gx,v]$ and sends $(E_\tau)_{xK}$ to $(E_\tau)_{gxK}$ linearly.
We equip  $E_\tau$ with the unique unitary structure which, at the fiber of $eK$, coincides with the unitary structure of $V_\tau$ and such that the action of $G$ is unitary.
This homogeneous vector bundle is \emph{natural} in the sense that an isometry $g$ of $X$ gives an isomorphism of the complex vector spaces $(E_\tau)_{xK}$ and $(E_\tau)_{gxK}$ that preserves the unitary structure.

Let $\Gamma^{\infty}(E_\tau )$ denote the space of smooth sections of $E_\tau$.
Given $\psi \in \Gamma^{\infty}(E_\tau)$, we have that $\psi(xK)= [x,f(x)]$, with $f$ in $C^\infty(G/K;\tau)$, the set of smooth functions $f: G \rightarrow V_\tau$ such that $f(xk)= \tau(k^{-1})f(x)$.
Conversely, any $f\in C^{\infty}(G/K;\tau)$ defines an element $\psi\in\Gamma^{\infty}(E_\tau)$.
The group $G$ acts on $\Gamma^{\infty}(E_\tau)$ on the left by $(g\cdot \psi)(xK):=g\psi(g^{-1} xK)=g[g^{-1}x,f(g^{-1}x)]= [x,f(g^{-1}x)]$, and hence on $C^\infty(G/K;\tau)$ by $(g\cdot f)(x) =f(g^{-1}x)$.

Let $\Gamma$ be a discrete cocompact subgroup of $G$ that acts freely on $X$.
We restrict to $\Gamma$ the left actions of $G$ on $X=G/K$, $E_\tau$, $\Gamma^\infty(E_\tau)$ and $C^\infty(G/K;\tau)$.
The space $\Gamma\backslash X$ is a compact Riemannian manifold and $\Gamma\backslash E_\tau$ is a natural homogeneous vector bundle over $\Gamma\backslash X$.
The space of smooth sections $\Gamma^\infty(\Gamma\ba E_\tau)$ of this vector bundle is isomorphic to the space $C^\infty (\Gamma\ba G/K;\tau)$ of left $\Gamma$-invariant functions in $C^\infty(G/K;\tau)$.
We denote by $L^2(\Gamma\ba E_\tau)$ the closure of $C^\infty (\Gamma\ba G/K;\tau)$ with respect to the inner product
$$
(f_1,f_2)= \int_X \langle f_1(x),f_2(x) \rangle \;\mathrm{d}x.
$$

The Lie algebra $\mathfrak g$ of $G$ acts on $C^\infty(G/K;\tau)$ by
$$
(X\cdot f)(x)=\left.\frac{d}{dt}\right|_{0} f(\exp(-tX)x).
$$
for $X\in\mathfrak g$ and $f\in C^\infty(G/K;\tau)$.
This action induces a representation of the universal enveloping algebra $U(\mathfrak g)$ of $\mathfrak g$.
If $G$ is semisimple we let $C=\sum X_i^2\in U(\mathfrak g)$ where $X_1,\dots,X_n$ is an orthonormal basis of $\mathfrak g$.
In this case, $C$ is called the \emph{Casimir} element.
When $G=\Iso(\R^n)$, thus $X=\R^n$, we let $C=\sum_{i=1}^n X_i^2 \in  U(\mathfrak g)$, where $X_1, \ldots, X_n$ is an orthonormal basis of $\R^n$.
In both cases, the element $C$ does not depend on the basis.

The element $C$ defines a \emph{differential operator} $\Delta_\tau$ on $C^\infty(G/K;\tau)$.
This operator commutes with the left action of $G$ on $C^\infty(G/K;\tau)$, in particular with elements in $\Gamma$, thus $\Delta_\tau$ induces a differential operator $\Delta_{\tau,\Gamma}$ acting on smooth sections of $\Gamma\backslash E_\tau$.

\begin{prop}\label{prop2:Laplace=Casimir}
Let $X=G/K$ be an irreducible simply connected Riemannian symmetric space of constant curvature and  denote by $(\tau_p,\bigwedge^p(\C^n))$ the $p$-exterior representation of $K=\Ot(n)$.
Then $\Delta_{\tau_p,\Gamma}$ coincides with the Hodge-Laplace operator on complex valued differential forms of degree $p$.
\end{prop}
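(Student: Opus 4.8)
The plan is to identify the operator $\Delta_{\tau_p}$ built from the Casimir element $C$ with the Hodge--Laplace operator $dd^*+d^*d$ on $p$-forms, working locally on $X=G/K$ and then descending to $\Gamma\ba X$. First I would recall the standard identification of differential $p$-forms on $X$ with sections of $E_{\tau_p}$: since the cotangent bundle $T^*X$ is the homogeneous bundle associated to the coisotropy representation of $K$ on $\mathfrak{p}^*\simeq\R^n$ (which for constant curvature is the standard representation of $K=\Ot(n)$), the bundle $\bigwedge^pT^*X\otimes\C$ is exactly $E_{\tau_p}$, and smooth $p$-forms correspond to functions $f\in C^\infty(G/K;\tau_p)$. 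Under this identification the $G$-action on forms by pullback matches the left $G$-action on $C^\infty(G/K;\tau_p)$ described in Subsection~\ref{subsec:bundles}.

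The key computation is to show that the image of the Casimir $C=\sum X_i^2$ under the representation on $C^\infty(G/K;\tau_p)$ equals the Hodge--Laplacian. I would invoke the Parthasarathy/Kuga-type formula: for a symmetric space, on sections of $E_\tau$ one has $dd^*+d^*d = -R_\Omega(C) + (\text{scalar term from } \tau)$, where the scalar term involves the Casimir eigenvalue of $\tau$ and, in the constant curvature case, reduces to the Weitzenböck curvature term $(p-1)(n-p)$ acting as a scalar (this is where constant curvature is essential — the curvature operator is a multiple of the identity on $\bigwedge^2$, so the Weitzenböck term is a genuine scalar rather than a bundle endomorphism). An efficient route: split $\mathfrak g=\mathfrak k\oplus\mathfrak p$, write $C=C_{\mathfrak k}+C_{\mathfrak p}$ (up to sign conventions on the Killing form versus the chosen inner product in the semisimple cases, and directly $C=\sum_{i=1}^n X_i^2$ over a basis of $\R^n=\mathfrak p$ in the flat case), observe that $C_{\mathfrak k}$ acts on $f$ through $\tau_p$ as the scalar $-\langle\tau_p\rangle$ (the Casimir eigenvalue of the $K$-representation $\tau_p$), and that $\sum_{i} X_i^2$ over $\mathfrak p$ gives the rough Laplacian $\nabla^*\nabla$; then the Weitzenböck identity $dd^*+d^*d=\nabla^*\nabla+\operatorname{Ric}$-type curvature term on $p$-forms, together with the constant curvature value of that term, yields the claim after matching constants.

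I would then verify the normalization: the eigenvalues should come out as in Theorem~\ref{thm0:d_lambda_cpto} (namely $k^2+k(n-1)+(p-1)(n-p)$ on the sphere), which both pins down the sign convention for $C$ and confirms the identification. Finally, since $\Delta_{\tau_p}$ commutes with the left $G$-action, in particular with $\Gamma$, it descends to $\Delta_{\tau_p,\Gamma}$ on $\Gamma\ba E_{\tau_p}$, and under the form identification this is precisely the Hodge--Laplace operator on $p$-forms of $\Gamma\ba X$. The main obstacle is bookkeeping: correctly matching the inner product used to define $C$ (Killing form versus the metric normalization giving constant curvature $\pm1$) with the Riemannian metric on $X$, handling the three cases ($S^n$, $\R^n$, $\Hy^n$) uniformly despite $G$ being non-semisimple in the flat case, and extracting the exact scalar $(p-1)(n-p)$ from the Weitzenböck term; the conceptual content (Casimir $=$ Hodge Laplacian on a symmetric space) is classical.
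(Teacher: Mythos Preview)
The paper does not actually prove this proposition: it is stated as a known fact, with no proof environment following it. So there is nothing to compare your argument against in the paper itself.

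Your sketch is the standard route to this classical result (essentially Kuga's lemma / the Matsushima--Murakami identity): identify $p$-forms with sections of $E_{\tau_p}$ via the isotropy representation, split $C=C_{\mathfrak k}+C_{\mathfrak p}$, recognize $C_{\mathfrak p}$ as the rough Laplacian and $C_{\mathfrak k}$ as the scalar $\tau_p(C_K)$, and match this against the Weitzenb\"ock decomposition $dd^*+d^*d=\nabla^*\nabla+\mathcal R$. The constant-curvature hypothesis is exactly what makes the curvature endomorphism $\mathcal R$ a scalar, so that it can equal $\tau_p(C_K)$. One small slip: in curvature $\kappa$ the Weitzenb\"ock term on $p$-forms is $\kappa\, p(n-p)$, not $(p-1)(n-p)$; the quantity $(p-1)(n-p)$ in Theorem~\ref{thm0:d_lambda_cpto} comes from the full Casimir eigenvalue $\langle\Lambda_{k,p},\Lambda_{k,p}+2\rho\rangle$, not from the curvature piece alone. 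This is precisely the bookkeeping you flag, and it does not affect the validity of the argument.
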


We now recall some notions from the Introduction that will be the main object of this paper.

\begin{defi}
\label{def2-tau_isosp}
Let $\tau$ be a unitary representation of $K$.
Let $\Gamma_1$ and $\Gamma_2$ be two cocompact discrete subgroups of $G$ acting freely on $X$.
The  spaces $\Gamma_1\backslash X$ and $\Gamma_2\backslash X$ are said to be \emph{$\tau$-isospectral} if the Laplace type operators $\Delta_{\tau,\Gamma_1}$ and $\Delta_{\tau,\Gamma_2}$ have the same spectrum.
Here, we shall just say that the spaces are  \emph{$p$-isospectral} if $\tau = \tau_p$.
\end{defi}

Given $\Gamma$ a discrete cocompact subgroup of $G$ acting freely on $X$, we consider the \emph{right regular representation} $R_\Gamma=\operatorname{Ind}_{\Gamma}^G(1_\Gamma)$ of $G$ on $L^{2}(\Gamma\backslash G)$.
This representation decomposes as an orthogonal direct sum of closed invariant subspaces of finite multiplicity
\begin{equation}\label{eq2:L^2(Gamma/G)}
L^2(\Gamma\backslash G)= \sum_{\pi\in \widehat G} n_\G(\pi) \,H_\pi
\end{equation}
where $\widehat G$ is the unitary dual of $G$ and, for each $\pi \in \widehat G$,  $n_\G(\pi)$ denotes the multiplicity of $\pi$ in this decomposition.
Note that if $G$ is noncompact then, generically, $H_\pi$ will be infinite dimensional.

Following the notation in \cite{Pe2}, we let $\widehat G_\tau=\{\pi\in\widehat G: \operatorname{Hom}_K(\tau,\pi)\ne0\}$ and we let $R_{\Gamma,\tau}$ be the unitary subrepresentation of $R_\Gamma $ given by
\begin{equation}\label{eqref{L2}}
L^2(\Gamma\backslash G)_{\tau} = \sum_{\pi\in\widehat G_\tau} n_\Gamma(\pi) H_\pi.
\end{equation}

\begin{defi}(see \cite{Pe2})
Let $\tau$ be an irreducible unitary representation of $K$.
Let $\Gamma_1$ and $\Gamma_2$ be two discrete subgroups of $G$ acting freely on $G/K$.
Then $\Gamma_1$ and $\Gamma_2$ are said to be \emph{$\tau$-equivalent} if the representations $R_{\Gamma_1,\tau}$ and $R_{\Gamma_2,\tau}$ are equivalent, that is, if $n_{\Gamma_1}(\pi) = n_{\Gamma_2}(\pi)$ for every $\pi\in\widehat G_\tau$.
\end{defi}

\begin{prop}\label{prop2:multip_lambda}
If $\ld\in\R$, the multiplicity $d_{\ld}(\tau, \G)$ of the eigenvalue $\lambda$ of
$\Delta_{\tau,\Gamma}$ is given by
\begin{equation}\label{multld}
d_{\ld}(\tau, \G)=
\sum_{\pi\in \widehat G :\, \lambda(C,\pi)=\ld}
n_\G(\pi) \dim\left(\operatorname{Hom}_K (V_\tau^*,H_\pi) \right).
\end{equation}
\end{prop}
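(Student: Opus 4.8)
The plan is to identify $\spec_\tau(\Gamma\ba X)$ with the spectrum of $\Delta_{\tau,\Gamma}$ acting on $L^2(\Gamma\ba E_\tau)$, to realize the latter space inside $L^2(\Gamma\ba G)$ tensored with $V_\tau$, and then to decompose using \eqref{eq2:L^2(Gamma/G)}. First I would recall that $L^2(\Gamma\ba E_\tau)$ is the closure of $C^\infty(\Gamma\ba G/K;\tau)$, which by the Frobenius-type identification is $\left(L^2(\Gamma\ba G)\otimes V_\tau\right)^K \cong \Hom_K(V_\tau^*, L^2(\Gamma\ba G))$, the $K$-fixed vectors for the action $R_\Gamma\otimes\tau$. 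Since the differential operator $\Delta_\tau$ is defined by the Casimir-type element $C\in U(\mathfrak g)$ acting in the $L^2(\Gamma\ba G)$ variable, it is $G$-equivariant and therefore respects the decomposition \eqref{eq2:L^2(Gamma/G)}. Hence
\begin{equation*}
L^2(\Gamma\ba E_\tau)\cong \sum_{\pi\in\widehat G} n_\Gamma(\pi)\,\Hom_K(V_\tau^*,H_\pi),
\end{equation*}
and on each summand $\Delta_{\tau,\Gamma}$ acts by the scalar $\lambda(C,\pi)$ by which $C$ acts on $H_\pi$ (here one uses that $C$ acts as a scalar on the irreducible $H_\pi$, which for semisimple $G$ is a standard fact about the Casimir, and for $G=\Iso(\R^n)$ follows from the analogous statement about $\sum X_i^2$).

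Next, given $\lambda\in\R$, the $\lambda$-eigenspace of $\Delta_{\tau,\Gamma}$ is exactly $\sum_{\pi:\,\lambda(C,\pi)=\lambda} n_\Gamma(\pi)\,\Hom_K(V_\tau^*,H_\pi)$, so its dimension is $\sum_{\pi:\,\lambda(C,\pi)=\lambda} n_\Gamma(\pi)\dim\Hom_K(V_\tau^*,H_\pi)$, which is \eqref{multld}. Only $\pi\in\widehat G_\tau$ contribute, since $\Hom_K(V_\tau^*,H_\pi)\neq 0$ forces $\tau$ (equivalently, up to passing to duals, $V_\tau^*$) to occur in $H_\pi|_K$; but allowing the sum to range over all of $\widehat G$ costs nothing because the other terms vanish. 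It remains to check that the identification of eigenspaces is compatible with the Hodge-Laplace interpretation afforded by Proposition~\ref{prop2:Laplace=Casimir}, i.e.\ that $\Delta_\tau$ as a differential operator on $C^\infty(G/K;\tau)$ genuinely corresponds to the $U(\mathfrak g)$-action used above; this is precisely the content of the definition of $\Delta_\tau$ recalled just before Proposition~\ref{prop2:Laplace=Casimir}, so no extra work is needed.

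The main technical point — and the step I expect to require the most care — is the spectral-theoretic justification that the algebraic decomposition above is an orthogonal Hilbert-space decomposition into eigenspaces with no continuous spectrum: one needs that $\Delta_{\tau,\Gamma}$ is essentially self-adjoint with discrete spectrum on the compact manifold $\Gamma\ba X$, that each $n_\Gamma(\pi)$ is finite, and that $\dim\Hom_K(V_\tau^*,H_\pi)<\infty$ (admissibility of $\pi$), so that each $\lambda$-eigenspace is finite-dimensional and the eigenvalues accumulate only at infinity. These are standard: cocompactness of $\Gamma$ gives discreteness of the decomposition \eqref{eq2:L^2(Gamma/G)} with finite $n_\Gamma(\pi)$, admissibility of irreducible unitary representations of $G$ (Harish-Chandra in the semisimple case, and an elementary direct check in the Euclidean case) gives finiteness of the $K$-multiplicities, and ellipticity of $\Delta_{\tau,\Gamma}$ on the compact manifold $\Gamma\ba X$ gives the rest. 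Once these ingredients are in place, reading off \eqref{multld} from the decomposition is immediate.
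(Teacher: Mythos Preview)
Your proposal is correct and follows essentially the same route as the paper's proof: identify $L^2(\Gamma\ba E_\tau)$ with $(L^2(\Gamma\ba G)\otimes V_\tau)^K\simeq\Hom_K(V_\tau^*,L^2(\Gamma\ba G))$, decompose via \eqref{eq2:L^2(Gamma/G)}, and read off the $\lambda$-eigenspace from the scalar action of $C$ on each $H_\pi$. The paper presents this as a brief sketch (``This result is well-known'') and omits the analytic points you flag about admissibility and discreteness; your additional care there is appropriate but not a departure from the paper's method.
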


\begin{proof}
This result is well-known.
We sketch the proof for completeness.
One has a map $\phi:C^\infty(\Gamma \ba G) \times V_\tau \longrightarrow C^\infty (\Gamma\ba G,V_\tau)$
given by $\phi(f,v)=f(g)v$.
Thus $\phi$ induces a homomorphism $\overline\phi:C^\infty(\Gamma \ba G) \otimes V_\tau \rightarrow C^\infty (\Gamma\ba G,V_\tau)$ that is actually an isomorphism and preserves the $K$-action. Indeed
\begin{align*}
\phi(R_k f,\tau(k)v)(g)
    &= f(gk)\tau(k)(v) = \tau(k) f(gk)v\\
    &= \tau(k) \phi(f,v) (gk)= \big(k\cdot \phi(f,v) \big)(g).
\end{align*}
Hence $\overline\phi$ sends $K$-invariants isomorphically onto $K$-invariants, thus
$$
\left(C^\infty(\Gamma \ba G) \times V_\tau \right)^K
    \simeq C^\infty (\Gamma\ba G,V_\tau)^K= C^\infty(\Gamma\ba G/K;\tau)\simeq \G^\infty(\G\ba E_\tau)
$$
Now
$$
\left(C^\infty(\Gamma \ba G) \times V_\tau \right)^K
    = \sum_{\pi\in\widehat G} n_\Gamma(\pi) \,\left(H_\pi^\infty\otimes V_\tau\right)^K
    \simeq \sum_{\pi\in\widehat G} n_\Gamma(\pi) \,\Hom_K\left(V_\tau^*,H_\pi^\infty\right).
$$
Thus
$$
L^2(\Gamma\ba E_\tau)_\lambda \simeq
    \sum_{\pi\in \widehat G :\, \lambda(C,\pi)=\ld}
    n_\G(\pi) \left(\operatorname{Hom}_K (V_\tau^*,H_\pi) \right).
$$
\end{proof}

From formula~\eqref{multld} one sees that the only representations in $\widehat G$ that can contribute to the multiplicity of the eigenvalue $\ld$ are those in $\widehat G_\tau$.
As a direct consequence we have that:

\begin{prop}\label{prop2:tau-equiv=>tau-iso}
Let $\Gamma_1$ and $\Gamma_2$ be discrete cocompact subgroups of $G$ acting freely on $X$.
If $\Gamma_1$ and $\Gamma_2$ are $\tau$-equivalent then $\Gamma_1\ba X$ and $\Gamma_2\ba X$ are $\tau$-isospectral.
\end{prop}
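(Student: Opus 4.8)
The plan is to deduce this directly from the multiplicity formula \eqref{multld} of Proposition~\ref{prop2:multip_lambda}. Since two Laplace-type operators on compact manifolds are isospectral exactly when they share the same eigenvalue multiplicity function, it suffices to prove that $d_\lambda(\tau,\G_1)=d_\lambda(\tau,\G_2)$ for every $\lambda\in\R$.

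First I would note that in the sum
\begin{equation*}
d_{\ld}(\tau, \G)=\sum_{\pi\in \widehat G :\, \lambda(C,\pi)=\ld} n_\G(\pi)\, \dim\!\big(\operatorname{Hom}_K (V_\tau^*,H_\pi)\big),
\end{equation*}
both the scalar $\lambda(C,\pi)$ and the integer $\dim\operatorname{Hom}_K(V_\tau^*,H_\pi)$ depend only on $\pi$, not on $\G$. Moreover a summand is zero unless $\operatorname{Hom}_K(V_\tau^*,H_\pi)\neq0$, and — as observed right after Proposition~\ref{prop2:multip_lambda}, using that $\tau$ is unitary so that $V_\tau^*\cong\overline{V_\tau}$ (in the case $\tau=\tau_p$ of interest, $\tau_p$ is even self-dual) — any such $\pi$ lies in $\widehat G_\tau$. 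Hence the sum may be restricted to the $\G$-independent index set $\{\pi\in\widehat G_\tau:\lambda(C,\pi)=\lambda\}$.

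Finally, invoking the hypothesis of $\tau$-equivalence, namely $n_{\G_1}(\pi)=n_{\G_2}(\pi)$ for all $\pi\in\widehat G_\tau$, the two sums for $d_\lambda(\tau,\G_1)$ and $d_\lambda(\tau,\G_2)$ coincide term by term, so the two operators $\Delta_{\tau,\G_1}$ and $\Delta_{\tau,\G_2}$ have equal spectra with multiplicities, i.e.\ $\G_1\ba X$ and $\G_2\ba X$ are $\tau$-isospectral. There is no real obstacle: the entire content is already contained in Proposition~\ref{prop2:multip_lambda}, and the only point deserving a word of care is the identification, via unitarity of $\tau$, of the set of representations contributing to $d_\lambda(\tau,\G)$ with a subset of $\widehat G_\tau$ independent of $\G$.
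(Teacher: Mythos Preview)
Your proof is correct and is exactly the argument the paper has in mind: the authors state Proposition~\ref{prop2:tau-equiv=>tau-iso} as a ``direct consequence'' of formula~\eqref{multld}, after noting that only representations in $\widehat G_\tau$ contribute to $d_\lambda(\tau,\G)$. Your added remark about the passage from $\operatorname{Hom}_K(V_\tau^*,H_\pi)$ to $\widehat G_\tau$ via unitarity is a fair point of care that the paper leaves implicit.
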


\subsection{Unitary dual group of the orthogonal group}\label{subsec:rep_ortho}
If $X$ is a symmetric space of constant curvature, then either $X=S^n$, $X=\R^n$ or $X=\Hy^n$.
In all three cases we have  $K \simeq\Ot(n)$.
We will need some well known facts about the irreducible representations of $\Ot(n)$.

We first recall the root system of the complex simple Lie algebra $\so(n,\C)$.
Let
$$
\mathfrak h=\left\{H=\sum_{j=1}^m i h_j(E_{2j-1,2j} - E_{2j,2j-1}): h_j\in\C\right\}.
$$
Then $\mathfrak h$ is a Cartan subalgebra of $\so(2m,\C)$ and also of $\so(2m+1,\C)$ if we add a zero row and a zero column at the end.
For $H\in\mathfrak h$, set $\varepsilon_j(H)=h_j$ for $1\leq j\leq m$.
We consider the inner product $\langle\,,\,\rangle$ on $\mathfrak h_{\R}$ obtained by $\frac{1}{2(n-1)}$ times the restriction of the Killing form on $\mathfrak g$, and its dual form on $h_{\R}^*$.
The root systems of $\so(2m+1,\C)$ and $\so(2m,\C)$ with respect to $\mathfrak h$ and $\langle\,,\,\rangle$ are of type $B_m$ and $D_m$ respectively.
We list the roots in Table~\ref{tabla:rootsystem}.

\begin{table}[b]
\caption{Root systems for $\so(n)$.}\label{tabla:rootsystem}
$
\begin{array}{c|l|l}
&\multicolumn{1}{c|}{\so(2m+1)} & \multicolumn{1}{c}{\so(2m)}\\ \hline
\text{roots} &
    \begin{array}{l}
    \pm\varepsilon_i\pm\varepsilon_j\quad (i\neq j) \\ \pm\varepsilon_i
    \end{array} &
    \begin{array}{l}
    \pm\varepsilon_i\pm\varepsilon_j\quad (i\neq j)
    \end{array}
\\ \hline
\text{\begin{tabular}{c}positive \\ roots\end{tabular}}&
    \begin{array}{l}
    \varepsilon_i\pm\varepsilon_j\quad (i<j) \\ \varepsilon_i
    \end{array}&
    \begin{array}{l}
    \varepsilon_i\pm\varepsilon_j\quad (i<j)
    \end{array}
\\ \hline
\text{\begin{tabular}{c}simple \\ roots\end{tabular}}&
    \begin{array}{l}
    \varepsilon_i-\varepsilon_{i+1} \quad (1\leq i<m) \\ \varepsilon_m
    \end{array}&
    \begin{array}{l}
    \varepsilon_i-\varepsilon_{i+1}\quad (1\leq i<m) \\ \varepsilon_{m-1}+\varepsilon_m
    \end{array}
 \end{array}
$
\end{table}

The finite-dimensional irreducible representations of a complex simple Lie algebra are characterized by their corresponding highest weights.
We will denote them by $\mathcal{P}(\mathfrak g)$.

We have
\begin{align*}
\mathcal{P}(\so(2m))
    &= \left\{\sum_{i=1}^m c_i\varepsilon_i:
        \begin{array}{l}
          c_i\in\Z\;\forall i \text{ or }c_i\in\frac12+\Z\; \forall i\text{, and}\\
          c_1\geq c_2\geq\dots\geq c_{m-1}\geq| c_m|.
        \end{array}\right\},
\\
\mathcal{P}(\so(2m+1))
    &= \left\{\sum_{i=1}^m  c_i\varepsilon_i:
        \begin{array}{l}
           c_i\in\Z\;\forall i \text{ or } c_i\in\frac12+\Z\; \forall i\text{, and}\\
           c_1\geq c_2\geq\dots\geq c_{m-1}\geq c_m\geq0.
        \end{array}\right\}.
\end{align*}

The irreducible representations of $\so(n)$ are in a one to one correspondence with those of the simply connected Lie group $\mathrm{Spin}(n)$.
In the case of  $\SO(n)$, the highest weights of the  irreducible representations  are given by
\begin{equation}\label{eq2:P(SO(n))}
\mathcal{P}(\SO(n))
    = \left\{\sum_{i=1}^m  c_i\varepsilon_i\in \mathcal{P}(\so(n)):  c_i\in\Z\;\forall i\right\}.
\end{equation}

\begin{exam}\label{exam2:tau_p-SO(n)}
Set $\Lambda_p=\sum_{i=1}^{\min(p,n-p)}\varepsilon_i\in\mathcal{P}(\SO(n))$ for $0\leq p\leq n$.
If $p\neq \frac n2$, then $\Lambda_p$ is the highest weight of the $p$-exterior representation on $\bigwedge \C^n$ of $\SO(n)$.
These representations are irreducible.
The $m$-exterior power representation $\bigwedge^{m}(\C^{2m})$ of $\SO(2m)$ decomposes  as $\bigwedge_+^{m}(\C^{2m})\oplus \bigwedge_-^{m}(\C^{2m})$, where $\bigwedge_{\pm}^{m}(\C^{2m})$ are irreducible and have highest weights $\sum_{j=1}^{m-1}\varepsilon_j\pm\varepsilon_m$.
\end{exam}

We now describe the irreducible regular representations of the full orthogonal
group $\Ot(n)$ in terms of the irreducible representations of the special orthogonal group $\SO(n)$.
Let
\begin{equation}\label{eq2:g_0}
g_0=
\begin{cases}
-\I_{n} &\quad
    \text{if $n$ is odd,}\\
\left[\begin{smallmatrix}
\I_{n-1}&\\ &-1
\end{smallmatrix}\right]&\quad
    \text{if $n$ is even.}
\end{cases}
\end{equation}
Then $\Ot(n)=\SO(n)\cup g_0\SO(n)$,  thus we will define the representations of $\Ot(n)$ on each component, $\SO(n)$ and $g_0\SO(n)$.

For $\Lambda\in \mathcal {P}(\SO(2m+1))$ and $\delta=\pm1$, let $(\pi_\Lambda,V)$ be the representation of ${\SO}(2m+1)$ with highest weight $\Lambda$.
Then we may define a representation $(\pi_{\Lambda,\delta},V)$ of ${\Ot}(2m+1)$ on $V$  by setting, for $g \in {\Ot}(2m+1)$,
\begin{equation}\label{eq2:rep_O(2m+1)}
\pi_{\Lambda,\delta}(g)(v)=
    \begin{cases}
      \pi_\Lambda(g)(v)     &\;\text{if }g\in\SO(2m+1),\\
      \delta\,\pi_\Lambda( g_0g)( v) &\;\text{if }g\in g_0\SO(2m+1).
    \end{cases}
\end{equation}

For $\Lambda=\sum_{j=1}^m c_j\varepsilon_j\in \mathcal {P}(\SO(2m))$ ($ c_j\in\Z$ for all $j$ and $ c_1\geq\dots\geq c_{m-1}\geq| c_m|$), we denote by $\overline\Lambda=\sum_{j=1}^{m-1} c_j\varepsilon_j- c_m\varepsilon_m\in \mathcal {P}(\SO(2m))$.
Let $(\pi_\Lambda,V_\Lambda)$ be the irreducible representation of $\SO(2n)$ with highest weight $\Lambda$.
If $I_{g_{0}}(g)=g_0gg_0$, then $I_{g_{0}}$ defines an automorphism of $\SO(2m)$ and
one can see that $(\pi_\Lambda \circ I_{g_{0}},V_\Lambda)$ has highest weight $\overline\Lambda$. Thus, there exists a unitary operator $T_\Lambda:V_\Lambda\to V_{\overline\Lambda}$ such that $T_\Lambda\circ(\pi_\Lambda\circ I_{g_{0}})(g)=\pi_{\overline\Lambda}(g)\circ T_\Lambda$ for every $g\in \SO(m)$. Furthermore, $(\pi_\Lambda \circ I_{g_{0}},V_\Lambda)$ is equivalent to $(\pi_\Lambda,V_\Lambda)$ if and only if $ c_m=0$.

If $\Lambda\in \mathcal {P}(\SO(2m))$ is such that $ c_m=0$ and $\delta\in\{\pm1\}$, we define a representation $\pi_{\Lambda,\delta}$ of $\Ot(2m)$ on $V_\Lambda$ as
\begin{equation}\label{eq2:rep_O(2m)_conj}
\pi_{\Lambda,\delta}(g)(v)=
\begin{cases}
    \pi_\Lambda(g)(v),&\quad\text{if $g\in\SO(2m)$,}\\
    \delta\, T_\Lambda (\pi_\Lambda(g_0g)(v)), &\quad\text{if $g\in g_0 \SO(2m)$.}
\end{cases}
\end{equation}
Note that this definition depends on the choice of $T_\Lambda$ since $-T_\Lambda$ is another intertwining operator between $\pi_\Lambda$ and $\pi_{\overline\Lambda}$.
However, we have $\pi_{\Lambda,\delta}\simeq\pi_{\Lambda,-\delta}\otimes\det$.

If $\Lambda\in \mathcal {P}(\SO(2m))$ is such that $ c_m>0$, we  set $\delta=0$ and
define the representation $\pi_{\Lambda,0}$ of $\Ot(2m)$ on $V_\Lambda\oplus V_{\overline\Lambda}$ as follows
\begin{equation}\label{eq2:rep_O(2m)_no_conj}
\pi_{\Lambda,0}(g)(v,v')=
\begin{cases}
    (\pi_{\Lambda}(g)(v),\pi_{\overline\Lambda}(g)(v')),&\quad
        \text{if $g\in\SO(2m)$}\\
    (\pi_\Lambda(g_0g)v',\pi_\Lambda(g_0g)v)&\quad\text{if $g \in g_0\SO(2m)$.}
\end{cases}
\end{equation}
In particular $\pi_{\Lambda,0}(g_0)(v,v')=(v',v)$, thus  $(V_\Lambda\oplus V_{\overline\Lambda}, \pi_{\Lambda,0})$ is irreducible.

In the next theorem we describe the unitary dual of $G=\Ot(n)$.

\begin{thm}\label{prop2:dual_O(n)}
We have
\begin{eqnarray*}
\widehat{\Ot(2m+1)}
    &=&
    \left\{\pi_{\Lambda,\delta} \textrm{ as in }\eqref{eq2:rep_O(2m+1)} : \Lambda\in \mathcal{P}(\SO(2m+1)), \, \delta =\pm 1\right\},\\
\widehat{\Ot(2m)}
    &=&
    \left\{\pi_{\Lambda,\delta} \textrm{ as in }\eqref{eq2:rep_O(2m)_conj} : \Lambda \in \mathcal{P}(\SO(2m)), \, c_m=0,\, \delta =\pm1 \right \}\\
    & &\quad
    \cup\;\left\{\pi_{\Lambda,0} \textrm{ as in }\eqref{eq2:rep_O(2m)_no_conj} : \Lambda \in \mathcal{P}(\SO(2m)), \, c_m>0 \right \}.
\end{eqnarray*}
\end{thm}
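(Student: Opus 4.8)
The plan is to deduce the description of $\widehat{\Ot(n)}$ from the classical Clifford theory of the index-two normal subgroup $\SO(n)\subset\Ot(n)$, together with the explicit description of the automorphism $I_{g_0}$ of $\SO(n)$ recalled before the statement. Throughout, $\det$ denotes the nontrivial character of $\Ot(n)/\SO(n)\cong\Z/2\Z$, and for $\sigma\in\widehat{\SO(n)}$ we write $\sigma^{g_0}:=\sigma\circ I_{g_0}$ for its $g_0$-conjugate. Since $g_0^2=\I_n$ by \eqref{eq2:g_0}, one has $\Ot(n)=\SO(n)\rtimes\langle g_0\rangle$, and the whole question reduces to deciding when $\pi_\Lambda$ is $g_0$-self-conjugate.

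First I would record the abstract dichotomy for a representation $\sigma\in\widehat{\SO(n)}$. If $\sigma^{g_0}\not\simeq\sigma$, then $\operatorname{Ind}_{\SO(n)}^{\Ot(n)}\sigma$ is irreducible, is isomorphic to $\operatorname{Ind}_{\SO(n)}^{\Ot(n)}\sigma^{g_0}$, restricts to $\SO(n)$ as $\sigma\oplus\sigma^{g_0}$, and is fixed by $\otimes\det$. If $\sigma^{g_0}\simeq\sigma$, then $\sigma$ extends to $\Ot(n)$ in exactly two ways; these two extensions differ by $\otimes\det$, hence are inequivalent, and their direct sum is $\operatorname{Ind}_{\SO(n)}^{\Ot(n)}\sigma$. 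Moreover every $\pi\in\widehat{\Ot(n)}$ arises in exactly one of these ways: by Clifford's theorem $\pi|_{\SO(n)}$ is either irreducible, or the sum of two inequivalent $g_0$-conjugate irreducibles, and in either case $\pi$ is, by Frobenius reciprocity, a constituent of $\operatorname{Ind}_{\SO(n)}^{\Ot(n)}$ of any irreducible constituent of $\pi|_{\SO(n)}$, so it is one of the representations produced above. It then remains to identify the representations \eqref{eq2:rep_O(2m+1)}, \eqref{eq2:rep_O(2m)_conj}, \eqref{eq2:rep_O(2m)_no_conj} with these abstract extensions and inductions.

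For $n=2m+1$ the element $g_0=-\I_n$ is central, so $\sigma^{g_0}=\sigma$ for every $\sigma$; indeed $\Ot(2m+1)=\SO(2m+1)\times\langle g_0\rangle$. Hence every $\pi_\Lambda$ has exactly two extensions, which are precisely $\pi_{\Lambda,\pm1}$ of \eqref{eq2:rep_O(2m+1)}: they restrict to $\pi_\Lambda$, and $\pi_{\Lambda,\delta}(g_0)=\delta\,\pi_\Lambda(g_0^2)=\delta\,\operatorname{Id}$ shows they are distinct. By the dichotomy these exhaust $\widehat{\Ot(2m+1)}$. For $n=2m$, $I_{g_0}$ is the nontrivial diagram automorphism, acting by $\varepsilon_m\mapsto-\varepsilon_m$ on $\mathfrak h$, so $\pi_\Lambda\circ I_{g_0}$ has highest weight $\overline\Lambda$ and $\pi_\Lambda^{g_0}\simeq\pi_\Lambda$ if and only if $c_m=0$. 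When $c_m=0$, the two extensions are $\pi_{\Lambda,\pm1}$ of \eqref{eq2:rep_O(2m)_conj}: one checks, using Schur's lemma on $T_\Lambda^2$ and $g_0^2=\I_n$, that $T_\Lambda$ may be normalized so that $T_\Lambda^2=\operatorname{Id}$, whence \eqref{eq2:rep_O(2m)_conj} is a genuine representation with $\pi_{\Lambda,\pm1}(g_0)=\pm T_\Lambda$, and $\pi_{\Lambda,\delta}\simeq\pi_{\Lambda,-\delta}\otimes\det$ as already observed. When $c_m\neq0$ one has $\overline\Lambda\neq\Lambda$, both in $\mathcal P(\SO(2m))$, and $\pi_\Lambda\not\simeq\pi_{\overline\Lambda}$, so $\operatorname{Ind}_{\SO(2m)}^{\Ot(2m)}\pi_\Lambda$ is irreducible and depends only on the unordered pair $\{\Lambda,\overline\Lambda\}$; one identifies it with $\pi_{\Lambda,0}$ of \eqref{eq2:rep_O(2m)_no_conj} by observing that both are irreducible of dimension $2\dim V_\Lambda$ and that $\operatorname{Hom}_{\SO(2m)}(\pi_{\Lambda,0}|_{\SO(2m)},\pi_\Lambda)\ne0$, so Frobenius reciprocity yields a nonzero, hence bijective, $\Ot(2m)$-intertwiner. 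Normalizing the parameter by $c_m>0$ lists each such representation exactly once, and since a representation with irreducible $\SO(2m)$-restriction cannot coincide with one whose restriction splits, the two families are disjoint and complete.

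The conceptual point is just the equivalence ``$\pi_\Lambda^{g_0}\simeq\pi_\Lambda\iff c_m=0$'', immediate from $\varepsilon_m\mapsto-\varepsilon_m$. The only genuine work — and the place I expect to spend effort — is the bookkeeping: verifying that the cocycle $T_\Lambda^2$ can be absorbed so that \eqref{eq2:rep_O(2m)_conj} and \eqref{eq2:rep_O(2m)_no_conj} really are homomorphisms, and matching those explicit models with $\operatorname{Ind}_{\SO(2m)}^{\Ot(2m)}\pi_\Lambda$; these are routine computations but are the substance of the argument.
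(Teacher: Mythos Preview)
The paper states this theorem without proof, presenting it as a well-known description of $\widehat{\Ot(n)}$ (the preceding paragraphs merely set up the notation \eqref{eq2:rep_O(2m+1)}--\eqref{eq2:rep_O(2m)_no_conj}). Your argument via Clifford theory for the index-two normal subgroup $\SO(n)\subset\Ot(n)$ is the standard and correct way to establish it: the dichotomy between $g_0$-self-conjugate and non-self-conjugate $\pi_\Lambda$, together with the identification of $I_{g_0}$ as trivial when $n$ is odd and as the diagram automorphism $\varepsilon_m\mapsto-\varepsilon_m$ when $n$ is even, gives exactly the stated classification. The bookkeeping you flag (normalizing $T_\Lambda$ so that $T_\Lambda^2=\Id$, and matching $\pi_{\Lambda,0}$ with the induced representation) is indeed routine, and your treatment of it is adequate.
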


\begin{exam}\label{exam2:tau_p-O(m)}
We denote by $(\tau_p,\bigwedge^p(\C^{n}))$  the complexification of the $p$-exterior representation of the canonical representation of $\Ot(n)$ on $\R^n$.
We have that  $\tau_p$  is irreducible for every value of $p$.
Furthermore, $\tau_p \simeq \tau_{n-p}\otimes \det$ for any $0\le p \le n$, where the intertwining operator is given by the Hodge star operator.

Recall that $\Lambda_p=\sum_{i=1}^{\min(p,n-p)}\varepsilon_i\in\mathcal{P}(\SO(n))$.
In the notation of Theorem~\ref{prop2:dual_O(n)}, if $n$ is odd we have $\tau_p \simeq \pi_{\Lambda_p,(-1)^p}$ and, for $n$ even, $\tau_{\frac n2}\simeq\pi_{\Lambda_{\frac n2},0}$.
To write $\tau_p\in\SO(2m)$ as \eqref{eq2:rep_O(2m)_conj} for $p\neq m$, we must fix an intertwining operator $T_{\Lambda_p}$.
For $0\leq p<m$, we write $\bigwedge^p(\C^{2m})= W_0\oplus W_1$, where $W_0$ (resp.\ $W_1$) is the subspace of $\bigwedge^p(\C^{2m})$ generated by $e_{i_1}\wedge\dots\wedge e_{i_p}$ where $2m\notin\{i_1,\dots,i_p\}$ (resp.\ $2m\in\{i_1,\dots,i_p\}$).
It is not hard to check that $T_{\Lambda_p}:= \Id_{W_0}\oplus (-\Id_{W_1})$ satisfies
$T_{\Lambda_p}\circ(\pi_{\Lambda_p}\circ I_{g_{0}})(g)=\pi_{\overline{\Lambda}_p}(g)\circ T_{\Lambda_p}$ for every $g\in\SO(2m)$.
Finally, one has that $\tau_p \simeq \pi_{\Lambda_p,1}$ for $0\leq p<m$ and $\tau_p \simeq \pi_{\Lambda_p,-1}$ for $m<p\leq n$.
\end{exam}

We conclude this section by stating two branching laws for orthogonal groups that will be needed in the following sections.

\begin{prop}\label{prop2:tau_p|_O(n-1)}
Let $\tau_p$ and $\sigma_p$ be the $p$-exterior representations of $\Ot(n)$ and $\Ot(n-1)$ respectively.
Then, for any $0\le p\le n$, we have
\begin{eqnarray}
\tau_p|_{\Ot(n-1)} &=&
      \sigma_p \oplus \sigma_{p-1},
\end{eqnarray}
with the understanding that $\sigma_{-1}$, $\sigma_n$ are the zero representations of $\Ot(n-1)$.
That is, $\tau_0|_{\Ot(n-1)}=\sigma_0$ and $\tau_n|_{\Ot(n-1)}=\sigma_{n-1}$.

\end{prop}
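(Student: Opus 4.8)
The plan is to prove the branching law by first treating the restriction of the underlying vector space $\R^n$ to $\Ot(n-1)$ and then passing to exterior powers. Write $\R^n = \R^{n-1}\oplus\R e_n$, where $\Ot(n-1)$ acts on $\R^{n-1}$ by its standard representation and fixes $e_n$. Thus, as $\Ot(n-1)$-modules, the standard representation of $\Ot(n)$ restricts to $\tau_1|_{\Ot(n-1)} \simeq \sigma_1\oplus\triv$, where $\triv$ is the trivial representation (spanned by $e_n$). Complexifying and taking $p$-th exterior powers commutes with restriction, so from the decomposition $\bigwedge^p(V\oplus L) \simeq \bigwedge^p V \oplus \big(\bigwedge^{p-1}V \otimes L\big)$ for a line $L$, applied with $V=(\R^{n-1})_\C$ and $L=(\R e_n)_\C$, we get
\begin{equation*}
\tau_p|_{\Ot(n-1)} \simeq \textstyle\bigwedge^p (\R^{n-1})_\C \,\oplus\, \big(\bigwedge^{p-1}(\R^{n-1})_\C \otimes \triv\big) \simeq \sigma_p \oplus \sigma_{p-1},
\end{equation*}
which is exactly the claim. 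Concretely, the first summand is spanned by the $e_{i_1}\wedge\dots\wedge e_{i_p}$ with all $i_j\le n-1$, and the second by those with $i_p=n$, i.e.\ $\omega\wedge e_n$ with $\omega\in\bigwedge^{p-1}(\R^{n-1})_\C$; both subspaces are manifestly $\Ot(n-1)$-invariant and the action on the second matches $\sigma_{p-1}$ since $e_n$ is fixed.

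The boundary cases are immediate from the conventions: for $p=0$ the second summand $\sigma_{-1}$ is absent and $\tau_0|_{\Ot(n-1)}=\triv=\sigma_0$; for $p=n$ the first summand $\bigwedge^n(\R^{n-1})_\C = 0$ (one cannot fit $n$ independent vectors in an $(n-1)$-dimensional space), so $\tau_n|_{\Ot(n-1)} = \sigma_{n-1}$, consistent with $\sigma_n = 0$.

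There is essentially no obstacle here: the only point requiring a word of care is that the two summands are genuinely $\Ot(n-1)$-submodules and not merely $\SO(n-1)$-submodules — but this is clear because $g_0\in\Ot(n-1)$ (or indeed any element of $\Ot(n-1)$) acts on $\R^n$ fixing $e_n$, hence preserves both the "no $e_n$" and the "contains $e_n$ factor" subspaces. One could alternatively derive the statement on the level of $\SO$ highest weights from the classical branching rule $\mathcal P(\SO(n))\downarrow\mathcal P(\SO(n-1))$ applied to $\Lambda_p$, using Example~\ref{exam2:tau_p-SO(n)}, and then check the $\det$-signs on $g_0$; but the direct exterior-algebra argument above is cleaner and handles all $p$, including $p=\tfrac n2$, uniformly.
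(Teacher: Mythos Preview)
Your argument is correct. The paper itself states this proposition without proof, treating it as a well-known branching law; your direct exterior-algebra computation via $\bigwedge^p(V\oplus L)\simeq\bigwedge^p V\oplus\bigwedge^{p-1}V$ is exactly the standard elementary justification and handles the full orthogonal group (not just $\SO$) and all $p$ uniformly.
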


\begin{lemma}\label{lem2:tau_p<pi|_O(2m-1)}
Let $\tau_p$ be the $p$-exterior representation of $\Ot(2m-1)$ and let $\pi_{\Lambda,\delta}\in\widehat{\Ot(2m)}$ in the notation of Theorem~\ref{prop2:dual_O(n)}.
Then $[\tau_p:\pi_{\Lambda,\delta}|_K]>0$ if and only if
\begin{equation}\label{lambdataup}
\Lambda= k\varepsilon_1 +\varepsilon_2+\dots+\varepsilon_p+c_{p+1}\varepsilon_{p+1}
\end{equation}
with $k\in\N$, $c_{p+1}\in\{0,1\}$, and where $\delta\in\{0,\pm1\}$ has a specific value.
More precisely, if $p=m-1, m$ and $c_m>0$ then $\delta=0$ while if $p\ne m-1,m$ or $p= m-1,m$ and $c_m=0$, then $\delta =\pm 1$ and the sign depends on $p$ and on the choice of the intertwining operator $T_\Lambda$.
Moreover $[\tau_p:\pi_{\Lambda,\delta}|_K]=1$.
\end{lemma}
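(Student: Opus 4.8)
The plan is to reduce the restriction $\pi_{\Lambda,\delta}|_{\Ot(2m-1)}$ to the classical multiplicity-free branching rule for $\SO(2m)\downarrow\SO(2m-1)$, and then to account for determinant twists using the decomposition $\Ot(2m-1)=\SO(2m-1)\sqcup(-\I_{2m-1})\SO(2m-1)$, in which $-\I_{2m-1}$ is central. Put $q=\min(p,2m-1-p)$. By Example~\ref{exam2:tau_p-SO(n)}, $\tau_p|_{\SO(2m-1)}$ is the irreducible representation $V_{\Lambda_q}$ of highest weight $\Lambda_q=\varepsilon_1+\dots+\varepsilon_q$; and since $-\I_{2m-1}$ acts on $\bigwedge^p(\C^{2m-1})$ by the scalar $(-1)^p$, the only two irreducible representations of $\Ot(2m-1)$ whose restriction to $\SO(2m-1)$ is $V_{\Lambda_q}$ are $\tau_p$ and $\tau_p\otimes\det\simeq\tau_{2m-1-p}$, told apart by the scalar ($(-1)^p$, respectively $-(-1)^p$) by which $-\I_{2m-1}$ acts on them. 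Consequently $[\tau_p:\pi_{\Lambda,\delta}|_K]$ equals the multiplicity of the $V_{\Lambda_q}$-isotypic part of $\pi_{\Lambda,\delta}|_{\SO(2m-1)}$ on which $\pi_{\Lambda,\delta}(-\I_{2m-1})$ acts by the scalar $(-1)^p$.

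Next I would compute the $\SO(2m-1)$-restriction. By \eqref{eq2:rep_O(2m)_conj} and \eqref{eq2:rep_O(2m)_no_conj}, $\pi_{\Lambda,\delta}|_{\SO(2m)}$ is $V_\Lambda$ when $c_m=0$ and $V_\Lambda\oplus V_{\overline\Lambda}$ when $c_m>0$; moreover $V_{\overline\Lambda}|_{\SO(2m-1)}=V_\Lambda|_{\SO(2m-1)}$, because $I_{g_0}$ restricts to the identity on $\SO(2m-1)$ (equivalently, the branching depends only on $|c_m|$). By the classical branching rule, $V_{\Lambda_q}$ occurs in $V_\Lambda|_{\SO(2m-1)}$, with multiplicity one, exactly when $\Lambda_q$ interlaces $\Lambda$, i.e.\ $c_1\ge(\Lambda_q)_1\ge c_2\ge\dots\ge c_{m-1}\ge(\Lambda_q)_{m-1}\ge|c_m|$. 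Using $c_m\ge0$ (valid since $\pi_{\Lambda,\delta}\in\widehat{\Ot(2m)}$), these inequalities force $c_1=k$ with $k\in\N$, $c_2=\dots=c_q=1$, $c_{q+1}\in\{0,1\}$ and $c_{q+2}=\dots=c_m=0$, which is the asserted shape of $\Lambda$. In particular $c_m>0$ can occur only when $q=m-1$, i.e.\ $p\in\{m-1,m\}$, and then $c_m=1$ (the case $\delta=0$); in all other cases $c_m=0$ and $\delta\in\{\pm1\}$. Hence $V_{\Lambda_q}$ sits in $\pi_{\Lambda,\delta}|_{\SO(2m-1)}$ with multiplicity $1$ if $c_m=0$ and $2$ if $c_m>0$.

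Finally I would pin down $\delta$ and check that the multiplicity is $1$. Regarding $-\I_{2m-1}$ as an element of $\Ot(2m)$ via the standard embedding, it equals $g_0\,(-\I_{2m})$ with $-\I_{2m}\in\SO(2m)$, and $-\I_{2m}$ acts on $V_\Lambda$ by $(-1)^{c_1+\dots+c_m}$. If $c_m=0$, then \eqref{eq2:rep_O(2m)_conj} gives $\pi_{\Lambda,\delta}(-\I_{2m-1})=\delta\,(-1)^{c_1+\dots+c_m}\,T_\Lambda$; since $I_{g_0}$ is trivial on $\SO(2m-1)$, the operator $T_\Lambda$ commutes with $\pi_\Lambda|_{\SO(2m-1)}$ and hence acts on the multiplicity-one space $V_{\Lambda_q}$ by a scalar, which must be $\pm1$ because $\pi_{\Lambda,\delta}(-\I_{2m-1})^2=\Id$; therefore exactly one $\delta\in\{\pm1\}$ makes this scalar equal $(-1)^p$, and for that $\delta$ one gets $[\tau_p:\pi_{\Lambda,\delta}|_K]=1$, while for the other it is $0$. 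If $c_m>0$, then \eqref{eq2:rep_O(2m)_no_conj} shows that $\pi_{\Lambda,0}(-\I_{2m-1})$ acts as $(-1)^{c_1+\dots+c_m}$ times the swap of $V_\Lambda$ with $V_{\overline\Lambda}$, so on the $V_{\Lambda_q}$-part it has the two eigenvalues $\pm(-1)^{c_1+\dots+c_m}$, each realized on one copy of $V_{\Lambda_q}$; exactly one of these copies is $\tau_p$, giving $[\tau_p:\pi_{\Lambda,0}|_K]=1$.

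The main obstacle will be this determinant/sign bookkeeping: correctly identifying the two irreducibles of $\Ot(2m-1)$ lying over $V_{\Lambda_q}$, and following the action of the central involution $-\I_{2m-1}$ through the intertwiner $T_\Lambda$ (whose normalization is exactly what makes the sign of $\delta$ depend on the choice of $T_\Lambda$) and through the swap in the case $c_m>0$, together with a careful treatment of the borderline values $p\in\{m-1,m\}$, where $\tau_{m-1}$ and $\tau_m$ have the same restriction to $\SO(2m-1)$ and the displayed form of $\Lambda$ is to be read with $q$ in place of $p$. The remaining ingredients — the interlacing combinatorics and the multiplicity-freeness of $\SO(2m)\downarrow\SO(2m-1)$ — are then routine.
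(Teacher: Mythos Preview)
The paper states this lemma without proof, treating it as a known branching law (it is essentially the classical $\SO(2m)\downarrow\SO(2m-1)$ interlacing rule together with the sign bookkeeping needed to pass to the full orthogonal groups; compare also \cite[Prop.~I.5]{Pe1}, which the authors invoke for a closely related branching computation in Proposition~\ref{prop3:mu=2}). So there is no paper proof to compare against, and the question is simply whether your argument is sound.

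It is. Your reduction to the multiplicity-free rule for $\SO(2m)\downarrow\SO(2m-1)$ via the interlacing conditions is exactly the right first step, and it correctly isolates the shape $\Lambda=k\varepsilon_1+\varepsilon_2+\dots+\varepsilon_q+c_{q+1}\varepsilon_{q+1}$ with $q=\min(p,2m-1-p)$, together with the observation that $c_m>0$ forces $q=m-1$, i.e.\ $p\in\{m-1,m\}$. Your treatment of the $\delta$-dependence is also correct: writing $-\I_{2m-1}=g_0\cdot(-\I_{2m})$ and using \eqref{eq2:rep_O(2m)_conj}, \eqref{eq2:rep_O(2m)_no_conj} you reduce the $\Ot(2m-1)$ vs.\ $\SO(2m-1)$ ambiguity to the eigenvalue of $\pi_{\Lambda,\delta}(-\I_{2m-1})$ on the unique $V_{\Lambda_q}$-isotypic line (respectively plane), and the involution property $T_\Lambda^2=\Id$ (equivalently $\pi_{\Lambda,\delta}(-\I_{2m-1})^2=\Id$) pins this eigenvalue to $\pm1$. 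This yields multiplicity exactly one for the correct $\delta$, with the sign of $\delta$ depending on the normalization of $T_\Lambda$ when $c_m=0$, precisely as the lemma asserts. Your caveat that the displayed form of $\Lambda$ should be read with $q$ in place of $p$ when $p\ge m$ is well taken; the paper effectively sidesteps this by handling $p\ge m$ via the duality $\widehat G_{\tau_p}=\{\pi_{\Lambda,\delta}:\pi_{\Lambda,-\delta}\in\widehat G_{\tau_{n-p}}\}$ in Proposition~\ref{prop3:G_tau_p}.
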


\section{Compact case}\label{sec:compact}

In this section we shall prove the assertions in Theorem~\ref{thm0:d_lambda_cpto} and Corollary~\ref{coro0:compacto} for constant curvature spaces of compact type, that is, for spherical space forms.
We fix the following notation for this section:
\begin{align*}
G&= \Ot(n+1)\simeq \Iso(S^n) ,\\
K&= \Ot(n)= \left\{g\in G : g.e_{n+1} =e_{n+1}\right \},\\
X&= G/K \simeq S^n.
\end{align*}
Note that, in all three cases, $G$ and $K$ have two connected components.
Since even dimensional spheres $S^{n}$ cover only $S^n$ and $\R P^{n}$, and their spectra are well-known, we will look only at odd dimensional spheres.
Thus, we assume throughout this section that $n=2m-1$, then $G=\Ot(2m)$ and $K=\Ot(2m-1)$.
We first describe the set $\widehat G_{\tau_p}$, in the notation of Theorem~\ref{prop2:dual_O(n)}.
Set, for $2\leq p\leq n-2$ and $k\in\N$,
\begin{equation}\label{eq:Lambdakp}
\Lambda_{k,p} = k\varepsilon_1+\varepsilon_2+\dots+\varepsilon_{\min(p,n-p)}
\end{equation}
and $\Lambda_{k,p}=k\varepsilon_1$ for $p=1,n-1$ and $k\in\N_0$.
In particular, $\Lambda_{1,p}=\Lambda_p$, as in Example~\ref{exam2:tau_p-SO(n)}.

\begin{prop}\label{prop3:G_tau_p}
Let $\tau_p$ be the $p$-exterior representation of $K$.
If $0<p<m-1$, then
\begin{align}
\widehat G_{\tau_0} &=
  \left\{ \pi_{\Lambda_{0,1},\,\delta}, \pi_{\Lambda_{k,1},\,\delta}: k\in \N \;\text{with } \delta \in \{\pm1\}\right\},
\notag \\
\widehat G_{\tau_p} &=
    \left\{ \pi_{\Lambda_{k,p},\,\delta},\,\pi_{\Lambda_{k,p+1},\,\delta} : k\in \N \;\text{with } \delta \in \{\pm1\} \right \},
\notag\\
\widehat G_{\tau_{m-1}} &=
    \left\{ \pi_{\Lambda_{k,m-1},\,\delta},\, \pi_{\Lambda_{k,m},\,0}: k\in \N \;\text{with } \delta \in \{\pm1\} \right\}.
\notag
\end{align}
Furthermore, if $m\leq p\leq 2m-1=n$, then $
\widehat G_{\tau_p} =
\left\{\pi_{\Lambda,\delta} : \pi_{\Lambda,-\delta}\in \widehat G_{\tau_{n-p}} \right\}.
$
In the sets above, $\delta$ is uniquely determined by $k$, $p$ and $T_\Ld$.
Moreover, for any $0\leq p<n$ and $k\in\N$, $\pi_{\Lambda_{k,p+1},\,\delta} \in \widehat G_{\tau_p} \cap \widehat G_{\tau_{p+1}}$.
\end{prop}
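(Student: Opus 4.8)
The plan is to compute $\widehat{G}_{\tau_p}=\{\pi\in\widehat{\Ot(2m)}:\Hom_K(\tau_p,\pi)\ne 0\}$ directly from the branching law in Lemma~\ref{lem2:tau_p<pi|_O(2m-1)} together with the classification of $\widehat{\Ot(2m)}$ in Theorem~\ref{prop2:dual_O(n)}. By Lemma~\ref{lem2:tau_p<pi|_O(2m-1)}, an irreducible $\pi_{\Lambda,\delta}\in\widehat{\Ot(2m)}$ satisfies $[\tau_p:\pi_{\Lambda,\delta}|_K]>0$ precisely when $\Lambda=k\varepsilon_1+\varepsilon_2+\dots+\varepsilon_p+c_{p+1}\varepsilon_{p+1}$ with $k\in\N$ and $c_{p+1}\in\{0,1\}$, the sign $\delta$ being then determined by $p$ and $T_\Lambda$. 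So first I would simply observe that, for $0<p<m-1$, the two choices $c_{p+1}=1$ and $c_{p+1}=0$ give the highest weights $\Lambda_{k,p+1}=k\varepsilon_1+\varepsilon_2+\dots+\varepsilon_{p+1}$ and $\Lambda_{k,p}=k\varepsilon_1+\varepsilon_2+\dots+\varepsilon_p$ respectively (using $\min(p,n-p)=p$ and $\min(p+1,n-p-1)=p+1$ in the regime $p+1\le m-1$, which is where \eqref{eq:Lambdakp} simplifies). This yields the displayed formula for $\widehat{G}_{\tau_p}$, and the case $p=0$ and the boundary case $p=m-1$ (where $c_m=1$ forces $\delta=0$ by the last sentence of Lemma~\ref{lem2:tau_p<pi|_O(2m-1)}, giving $\pi_{\Lambda_{k,m},0}$, while $c_m=0$ gives $\pi_{\Lambda_{k,m-1},\delta}$ with $\delta=\pm1$) are handled the same way, keeping track of which weights in $\mathcal P(\SO(2m))$ are admissible.

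For the range $m\le p\le n$, the plan is to invoke the Hodge-star identity $\tau_p\simeq\tau_{n-p}\otimes\det$ of Example~\ref{exam2:tau_p-O(m)} (here on $K=\Ot(2m-1)$), so that $\Hom_K(\tau_p,\pi_{\Lambda,\delta})\ne 0$ iff $\Hom_K(\tau_{n-p},\pi_{\Lambda,\delta}\otimes\det)\ne 0$. Since $\pi_{\Lambda,\delta}\otimes\det\simeq\pi_{\Lambda,-\delta}$ (this is the relation $\pi_{\Lambda,\delta}\simeq\pi_{\Lambda,-\delta}\otimes\det$ noted just after \eqref{eq2:rep_O(2m)_conj}; for $\delta=0$ the representation is self-conjugate under $\det$ in the relevant sense and $-\delta=0$), this gives exactly $\widehat G_{\tau_p}=\{\pi_{\Lambda,\delta}:\pi_{\Lambda,-\delta}\in\widehat G_{\tau_{n-p}}\}$. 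One subtlety to spell out is that $\det$ restricted from $\Ot(2m)$ to $K=\Ot(2m-1)$ is again the determinant character, so that twisting on $G$ and twisting on $K$ are compatible; this is immediate from $g_0\in g_0\SO(2m)$ having $\det g_0=-1$ and the block-diagonal embedding $K\hookrightarrow G$.

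The last assertion, that $\pi_{\Lambda_{k,p+1},\delta}\in\widehat G_{\tau_p}\cap\widehat G_{\tau_{p+1}}$ for $0\le p<n$ and $k\in\N$, follows by reading the just-established formulas twice: $\pi_{\Lambda_{k,p+1},\delta}$ appears in $\widehat G_{\tau_p}$ as the ``$c_{p+1}=1$'' member and in $\widehat G_{\tau_{p+1}}$ as the ``$c_{p+2}=0$'' member (i.e.\ $\Lambda_{k,p+1}$ is the highest weight with $c_{p+1}=1$ for $\tau_p$ and with $c_{p+2}=0$ for $\tau_{p+1}$, up to replacing $p+1$ by $n-p-1$ in the $\SO$-dominant normalization when $p+1>m$), with the caveat that one must check the value of $\delta$ is the \emph{same} in both readings — this is where I expect the only real bookkeeping to lie, since $\delta$ is pinned down by Lemma~\ref{lem2:tau_p<pi|_O(2m-1)} in terms of $p$, $c_{p+1}$ and the choice of $T_\Lambda$, and one must verify the two determinations agree (consistent with the parenthetical "$\delta$ is uniquely determined by $k$, $p$ and $T_\Lambda$" and with $\tau_p\simeq\pi_{\Lambda_p,(-1)^p}$ from Example~\ref{exam2:tau_p-O(m)} in the odd case). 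The main obstacle, then, is not conceptual but is the careful tracking of the sign $\delta$ across the branching and across the Hodge-star duality; everything else reduces to comparing the explicit weight strings in \eqref{eq:Lambdakp} and \eqref{lambdataup}.
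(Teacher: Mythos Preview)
Your proposal is correct and follows essentially the same route as the paper's own proof, which simply invokes Theorem~\ref{prop2:dual_O(n)}, Example~\ref{exam2:tau_p-O(m)}, and the branching law Lemma~\ref{lem2:tau_p<pi|_O(2m-1)} and asserts that ``one checks'' the description is as stated. Your write-up is considerably more detailed---in particular, your explicit use of the Hodge-star identity $\tau_p\simeq\tau_{n-p}\otimes\det$ together with $\pi_{\Lambda,\delta}\otimes\det\simeq\pi_{\Lambda,-\delta}$ to obtain the duality for $m\le p\le n$, and your remark that the only genuine bookkeeping lies in matching the sign $\delta$ across the two readings of $\pi_{\Lambda_{k,p+1},\delta}$, are exactly the points the paper leaves to the reader.
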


\begin{proof}
From Theorem~\ref{prop2:dual_O(n)} we see that $\widehat{\Ot(2m)}$ is the set of all representations $\pi_{\Lambda,\delta}$ where $\Lambda=\sum_{i=1}^m c_i\varepsilon_i\in \mathcal {P}(\SO(2m))$ (see~\eqref{eq2:P(SO(n))}), $ c_m\in\N_0$ and either $\delta=\pm1$ if $ c_m=0,$  or $\delta=0$ if $ c_m\in\N.$ Also, from Example~\ref{exam2:tau_p-O(m)} we see that, if $p>0$, $\tau_p=\tau_{\Lambda_p,\kappa}\in\widehat K$ as in \eqref{eq2:rep_O(2m+1)}, where $\Lambda_p=\sum_{j=1}^p\varepsilon_j$ and $\kappa=(-1)^p$.

Taking this into account,  by using the branching law  in Lemma~\ref{lem2:tau_p<pi|_O(2m-1)} one checks that the description of $\widehat G_{\tau_p}$ is as stated in the proposition.
\end{proof}

Now we prove %the main result of this section, by showing that,
that, for a spherical space form, the multiplicity of each eigenvalue of the Hodge-Laplace operator on $p$-forms involves  only one specific $n_\Gamma(\pi)$, that is to say,  the sum in \eqref{multld} has only one term.
We set $\mathcal E_0=\mathcal E_{n+1}=\emptyset$,
\begin{equation}
\begin{array}{r@{\,}l}
\mathcal E_1=\mathcal E_{n}&= \{k+k(n-1): k\in\N_0\},\quad\text{and} \\ [1mm]
\mathcal E_p&= \{k+k(n-1)+(p-1)(n-p): k\in\N\}
\end{array}
\end{equation}
for $1< p < n$.
Note that $\mathcal E_{p}=\mathcal E_{n+1-p}$ for every $0\leq p\leq n+1$.

\begin{proof}[Proof of Theorem~\ref{thm0:d_lambda_cpto}]
By Schur's lemma, the Casimir element $C$ acts on any irreducible representation $\pi_{\Lambda,\delta}$ with $\Lambda= \sum_{i=1}^{m} c_i\varepsilon_i$ by multiplication by a scalar $\lambda(C,\pi)$ given by
\begin{equation}\label{eq3:Casimir_on_pi}
\lambda(C,\pi_{\Lambda,\delta})=\langle\Lambda+\rho, \Lambda+\rho\rangle- \langle\rho,\rho\rangle = \langle\Lambda, \Lambda\rangle + 2\langle\Lambda,\rho\rangle\,,
\end{equation}
where $\rho= \sum_{i=1}^{m}(m-i)\varepsilon_i$.
Note that the scalar $\lambda(C,\pi_{\Lambda,\delta})$ does not depend on $\delta$.

We first assume that $p=0$.
By Proposition~\ref{prop3:G_tau_p}, the highest weights of representations in $\widehat G_{\tau_0}$ have the form $\Lambda= k\varepsilon_1$ with $k\in \N_0$ and
\begin{equation}\label{eq3:Casimir_on_pi}
\lambda(C,\pi_{k\varepsilon_1,\delta})= k^2 + 2k(m-1)=k(k+n-1).
\end{equation}
Proposition~\ref{prop2:multip_lambda} now implies that if $\lambda\notin \mathcal E_1$ then $\lambda$ is not in $\spec_0(\Gamma\ba S^n)$, that is, $d_\lambda(\tau_p,\Gamma)=0$.
Moreover, since $k \mapsto k(k+n-1)$ is increasing for $k\ge 0$ hence $k=k_\lambda$ is clearly determined by $\lambda\in\mathcal E_1$.
Actually
\begin{equation}
k_\lambda= -\tfrac{n-1}2 + \sqrt{(\tfrac{n-1}2)^2 + \lambda}= -(m-1) + \sqrt{(m-1)^2 + \lambda}.
\end{equation}
Thus, in this case $d_\lambda(\tau_0,\G)=n_\G(\pi_{\Lambda_{k_\lambda\,\varepsilon_1},\delta})$.

\medskip

Now assume  $0<p<m$.
By Proposition~\ref{prop3:G_tau_p}, if $\pi_{\Lambda,\delta} \in \widehat G_{\tau_p}$ then
$\Lambda =\Lambda_{k,p}$ or $\Lambda =\Lambda_{k,p+1}$ and by \eqref{multld}, for each $\lambda$, we must consider $\pi_{\Lambda_{k,p},\delta}, \pi_{\Lambda_{k,p+1},\delta}  \in \widehat G_{\tau_p}$ with $\lambda(C,\pi_{\Lambda_{k,p},\delta})= \ld$ or $\lambda(C,\pi_{\Lambda_{k,p+1},\delta})=\ld$ .
Since
\begin{align*}
\lambda(C,\pi_{\Lambda_{k,p},\delta})
    &= \langle\Lambda_{k,p}, \Lambda_{k,p}\rangle + 2\langle\Lambda_{k,p},\rho\rangle\\
    &= k^2 +p-1 + 2k(m-1)+ \textstyle\sum\limits_{i=2}^p (m-i)\varepsilon_i\\
    &= k^2 + k(n-1) + (p-1)(n-p)
\end{align*}
lies in $\mathcal E_p$ and $\lambda(C,\pi_{\Lambda_{k,p+1},\delta})= k^2 + k(n-1) + p(n -p-1)\in\mathcal E_{p+1}$, it follows that $\lambda$ is not an eigenvalue of $\Delta_{\tau,\Gamma}$ if $\lambda\notin\mathcal E_p\cup\mathcal E_{p+1}$.
%Therefore, $\lambda$ is not in $\spec_p(\Gamma\ba S^n)$ for $\lambda\notin\mathcal E_p\cup\mathcal E_{p+1}$.

It is clear that for $\lambda\in \mathcal E_p$ or $\lambda\in\mathcal E_{p+1}$, $k$ is uniquely determined by $\lambda$.
Indeed, we have
$$
k_\lambda =
\begin{cases}
-(m-1) +\sqrt{(m-1)^2 + \lambda - (p-1)(n-p)} &\quad\text{if $\lambda\in\mathcal E_p$,}\\[1mm]
-(m-1) +\sqrt{(m-1)^2 + \lambda - p(n-p-1)} &\quad\text{if $\lambda\in\mathcal E_{p+1}$.}
\end{cases}
$$

It remains only to check that  $\mathcal E_p$ and $\mathcal E_{p+1}$ are disjoint.
Thus, let us assume that $\lambda(C,\pi_{\Lambda_{k,p},\delta}) = \lambda(C,\pi_{\Lambda_{h,p+1},\delta})$ for some $h,k\in\N$. Then
$$
k^2 + k(n-1) + (p-1)(n-p) = h^2 + h(n-1) + p(n-p-1),
$$
which implies that
\begin{equation}\label{eq:equalevalues}
(k- h)(k+h+n-1) = n-2p.
\end{equation}
Now since $n>2p$, then $k>h$, thus the left-hand side is at least $n+1 >n-2p$, hence \eqref{eq:equalevalues} cannot hold.
Thus, $\mathcal E_p\cap\mathcal E_{p+1}=\emptyset$ for $0\leq p\leq n+1$.

It follows from this that for each $\lambda\in \mathcal E_p\cup \mathcal E_{p+1}$, the sum in Proposition~\ref{prop3:G_tau_p} has only one term, indeed
\begin{align*}
d_\lambda (\tau_p,\Gamma) =
\begin{cases}
n_\Gamma(\pi _{\Lambda_{{k_\lambda,p}}, \delta})
    &\quad\text{if $\lambda\in\mathcal E_p$,}\\
n_{\Gamma}(\pi _{\Lambda_{{k_{\lambda},p+1}}, \delta})
    &\quad\text{if $\lambda\in\mathcal E_{p+1}$.}
\end{cases}
\end{align*}

The case  when $m\leq p\leq 2m-1$ follows in the same way since $\pi_{\Lambda,\delta}\in\widehat G_{\tau_p}$ if and only if $\pi_{\Lambda,-\delta}\in\widehat G_{\tau_{n-p}}$ by Proposition~\ref{prop3:G_tau_p} and $C$ acts by the same scalar on both representations.
\end{proof}

We can now prove Corollary~\ref{coro0:compacto}, as an  application of  Theorem~\ref{thm0:d_lambda_cpto}.

\begin{proof}[Proof of Corollary~\ref{coro0:compacto}]
(i) For each $p$, Theorem~\ref{thm0:d_lambda_cpto} yields that the multiplicities of the eigenvalues of the Hodge-Laplace operator on $p$-forms determine the multiplicity $n_\Gamma(\pi)$ of every $\pi\in\widehat G_{\tau_p}$, hence $\Gamma_1$ and $\Gamma_2$ are $\tau_p$-equivalent.
The converse follows from Proposition~\ref{prop2:tau-equiv=>tau-iso}.

(ii) Given $\Gamma_1$ and $\Gamma_2$, assume that $\Gamma_1\ba X$ and $\Gamma_2\ba X$ are $p-1$-isospectral and $p+1$-isospectral. Then, by (i), $\G_1$ and $\G_2$ are $\tau_{p-1}$ and $\tau_{p+1}$-equivalent.

For $0<p<n$, since $\widehat G_{\tau_p}\subset \widehat G_{\tau_{p-1}}\cup \widehat G_{\tau_{p+1}}$ by Proposition~\ref{prop3:G_tau_p}, it follows that $n_{\Gamma_1}(\pi)=n_{\Gamma_2}(\pi)$ for every $\pi\in \widehat G_{\tau_p}$ hence $\G_1$ and $\G_2$ are $\tau_p$-equivalent, and as a consequence $\Gamma_1\ba X$ and $\Gamma_2\ba X$ are $p$-isospectral.

For $p=0$ (resp.\ $p=n$) we have $\widehat G_{\tau_0}\subset \widehat G_{\tau_{1}}\cup\{\pi_{0,\delta}\}$ (resp.\ $\widehat G_{\tau_n}\subset \widehat G_{\tau_{n-1}}\cup\{\pi_{0,-\delta}\}$), hence $\G_1$ and $\G_2$ are $\tau_0$-equivalent (resp.\ $\tau_n$-equivalent) since $n_{\Gamma_i}(\pi_{0,\delta}) = \beta_0(\Gamma_i\ba S^n)=1$ (resp.\ $n_{\Gamma_i}(\pi_{0,-\delta}) = \beta_n(\Gamma_i\ba S^n)=1$). 
Hence $\Gamma_1\ba X$ and $\Gamma_2\ba X$ are $0$-isospectral (resp.\ $n$-isospectral).
\end{proof}

\begin{rem}
The $p$-spectrum of spherical space forms has been investigated by many authors. For instance, in \cite{IkTa}, Ikeda and Taniguchi studied the $p$-spectrum of homogeneous spaces $G/K$  from the point of view of representation theory, determining the eigenvalues and the eigenspaces in the case of $S^n$ and $\C P^n$.
Later, Ikeda~\cite{Ik}, for every  $0\leq p<\frac{n-1}2$, found lens spaces that are $q$-isospectral for every $0\leq q\leq p$ but are not $p+1$-isospectral.
In  \cite{Pe2}, Pesce considered the notion of $\tau$-equivalent discrete subgroups and showed that $\tau$-isospectral spherical space forms give $\tau$-equivalent groups, in the case when the real-eigenspaces in $L^2(S^n;\tau)$ are irreducible. Finally we mention \cite{GM}, where Gornet and McGowan gave a rich family of examples of lens spaces that are $\tau_p$-equivalent for some values of $p$ only.
\end{rem}

As mentioned in the Introduction, Corollary~\ref{coro0:compacto}~(i) can be extended to  $\tau = \tau_{\mu, \kappa} \in\widehat K$, for more general choices of the highest weight $\mu$.

\begin{prop}\label{prop3:mu=2}
Let $\Gamma_1$, $\Gamma_2$ be finite subgroups of $G=\Ot(n+1)$   acting freely on $S^{n}$.
Let $\mu=\sum_{i=1}^{m-1} b_i\,\varepsilon_i\in\mathcal P(\SO(2m))$ be such that
$$2=b_1\geq b_2\geq\dots \geq b_{m-1}\geq 0 $$
and let $\kappa\in\{\pm1\}$.
If $\Gamma_1\backslash S^n$ and $\Gamma_2\backslash S^n$ are $\tau_{\mu,\kappa}$-isospectral, then  $\Gamma_1$ and $\Gamma_2$ are $\tau_{\mu,\kappa}$-equivalent.
\end{prop}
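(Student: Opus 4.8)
The plan is to follow the strategy of the proofs of Theorem~\ref{thm0:d_lambda_cpto} and Corollary~\ref{coro0:compacto}(i): describe $\widehat G_{\tau_{\mu,\kappa}}$, compute the Casimir eigenvalue on each of its members, and show that this eigenvalue \emph{separates} those members, so that the sum in \eqref{multld} has a single term. Since even-dimensional spheres cover only $S^n$ and $\R P^n$, whose spectra are classical, we may assume $n=2m-1$, so $G=\Ot(2m)$ and $K=\Ot(2m-1)$. Combining Theorem~\ref{prop2:dual_O(n)} with the multiplicity-free branching rule for $\SO(2m)\downarrow\SO(2m-1)$, a representation $\pi_{\Lambda,\delta}$ with $\Lambda=\sum_{i=1}^m c_i\varepsilon_i$ lies in $\widehat G_{\tau_{\mu,\kappa}}$ precisely when
\[
c_1\ge b_1=2\ge c_2\ge b_2\ge c_3\ge\dots\ge c_{m-1}\ge b_{m-1}\ge|c_m|,
\]
and $\delta$ is the value forced by $\mu$ and $\kappa$ (with $\delta=0$ when $c_m>0$), exactly as in Lemma~\ref{lem2:tau_p<pi|_O(2m-1)}; in particular $\dim\Hom_K(V_{\tau_{\mu,\kappa}}^*,H_{\pi_{\Lambda,\delta}})=1$. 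Thus the members of $\widehat G_{\tau_{\mu,\kappa}}$ are the $\pi_{\Lambda,\delta}$ with $\Lambda=k\varepsilon_1+\Lambda'$, where $k=c_1\ge2$ is arbitrary and the tail $\Lambda'=\sum_{i=2}^m c_i\varepsilon_i$ ranges over the \emph{finite} set of weights interlacing $\mu$, and \eqref{multld} becomes $d_\lambda(\tau_{\mu,\kappa},\G)=\sum_\pi n_\G(\pi)$, summed over those $\pi\in\widehat G_{\tau_{\mu,\kappa}}$ with $\lambda(C,\pi)=\lambda$.

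By the Casimir computation in the proof of Theorem~\ref{thm0:d_lambda_cpto} (Schur's lemma and $\lambda(C,\pi_{\Lambda,\delta})=\langle\Lambda+\rho,\Lambda+\rho\rangle-\langle\rho,\rho\rangle$), for $\Lambda=k\varepsilon_1+\Lambda'$ one gets
\[
\lambda(C,\pi_{\Lambda,\delta})=\sum_{i=1}^m(c_i+m-i)^2-\langle\rho,\rho\rangle=k^2+k(n-1)+c(\Lambda'),
\]
where $c(\Lambda'):=\langle\Lambda',\Lambda'+2\rho'\rangle$ with $\rho'=\sum_{i\ge2}(m-i)\varepsilon_i$ depends only on the tail, and $k\mapsto k^2+k(n-1)$ is strictly increasing. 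The heart of the proof is to show $\pi\mapsto\lambda(C,\pi)$ is injective on $\widehat G_{\tau_{\mu,\kappa}}$. Suppose two of its members, with first coordinates $k\ge h\ge2$ and tails $\Lambda'$, $\nu'$, have the same Casimir eigenvalue. If $k=h$ then $c(\Lambda')=c(\nu')$: the hypothesis $b_1=2$ makes the tails rigid — the interlacing has total slack $\sum_{i=2}^{m-1}(b_{i-1}-b_i)=2-b_{m-1}$, so $\Lambda'$ and $\nu'$ differ at most at two coordinates and each by at most $2$ — and since $x\mapsto x(x+2(m-i))$ is strictly increasing while the weights $2(m-i)$ are distinct, a short case analysis (on the shape of $\mu$, equivalently on $b_{m-1}\in\{0,1,2\}$) forbids any cancellation and forces $\Lambda'=\nu'$. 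If $k>h$ then $(k-h)(k+h+n-1)=c(\nu')-c(\Lambda')$; since $k+m-1$ and $h+m-1$ are distinct integers $\ge m+1$, the left side is $\ge n+4$ when $k-h=1$ and $\ge 4m+8$ when $k-h\ge2$, whereas one checks $|c(\nu')-c(\Lambda')|\le 4m-4$, which excludes $k-h\ge2$; and for $k-h=1$ the left side is odd and $\ge n+4$, while the same rigidity of the tails shows every \emph{odd} value of $c(\nu')-c(\Lambda')$ has absolute value at most $n$. Hence $k=h$, and injectivity follows.

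Granting this, for every $\lambda$ there is at most one $\pi\in\widehat G_{\tau_{\mu,\kappa}}$ with $\lambda(C,\pi)=\lambda$, in which case $d_\lambda(\tau_{\mu,\kappa},\G)=n_\G(\pi)$, and $d_\lambda(\tau_{\mu,\kappa},\G)=0$ otherwise. Thus $\tau_{\mu,\kappa}$-isospectrality of $\Gamma_1\ba S^n$ and $\Gamma_2\ba S^n$ forces $n_{\Gamma_1}(\pi)=n_{\Gamma_2}(\pi)$ for every $\pi\in\widehat G_{\tau_{\mu,\kappa}}$, i.e.\ $\Gamma_1$ and $\Gamma_2$ are $\tau_{\mu,\kappa}$-equivalent, as desired.

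I expect the injectivity step to be the main obstacle — in particular the equal-$k$ sub-case and the bound on odd Casimir differences, which are precisely where the hypothesis $b_1=2$ (rather than $b_1\ge3$) enters. For larger $b_1$ the tails acquire more freedom, the constants $c(\Lambda')$ may coincide, and one would instead have to invert \eqref{multld} by a triangular/inductive argument ordering the representations of $\widehat G_{\tau_{\mu,\kappa}}$ by Casimir eigenvalue — which is not guaranteed to succeed.
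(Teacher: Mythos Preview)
Your proof is correct and follows the same strategy as the paper: both show that the Casimir eigenvalue separates the members of $\widehat G_{\tau_{\mu,\kappa}}$, so that the sum in \eqref{multld} has a single term and $\tau_{\mu,\kappa}$-isospectrality forces equality of all $n_\Gamma(\pi)$. The paper carries out the injectivity check by splitting explicitly into the two shapes $\mu=2\varepsilon_1+\dots+2\varepsilon_p$ and $\mu=2\varepsilon_1+\dots+2\varepsilon_p+\varepsilon_{p+1}+\dots+\varepsilon_q$ and computing the Casimir differences directly---this is precisely the ``short case analysis on the shape of $\mu$'' you invoke, and your slack/parity bounds are a uniform repackaging of the same arithmetic.
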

\begin{proof}
As we noted in the proof of Corollary~\ref{coro0:compacto}~(i), it is sufficient to show that different representations of $\widehat G_{\tau_{\mu,\kappa}}$ have different Casimir eigenvalues.
The proof will be divided into two cases:

\begin{enumerate}
\item[(a)] $\mu_p:=2\varepsilon_1+\dots+2\varepsilon_p$ for some $1\leq p\leq m-1$,
\item[(b)] $\mu_{p,q}:=2\varepsilon_1+\dots+2\varepsilon_p+\varepsilon_{p+1}+\dots+\varepsilon_q$ for some $1\leq p<q\leq m-1$.
\end{enumerate}

\smallskip

\noindent {Case (a).}
By the branching law (see for example \cite[Prop.~I.5]{Pe1}) we have that $[\tau_{\mu,\kappa}: \pi_{\Lambda,\delta}|_K]>0$ if and only if
$$
\Lambda = \Lambda_{(k,a),p} := k\varepsilon_1+2\varepsilon_2+\dots+2\varepsilon_p+a\varepsilon_{p+1},
$$
where $k\geq2$, $0\leq a\leq 2$ and  $\delta\in\{0,\pm1\}$ has a specific value.
Hence, the highest weights involved in $\widehat G_{\tau_{\mu_p,\kappa}}$ have the form $\Lambda_{(k,a),p}$ with $0 \leq a \leq 2$, for every $p$.
We have
\begin{multline*}
\lambda(C,\pi_{\Lambda_{(k,a),p},\delta})
    = k(k+2m-2) + \sum_{i=2}^p 2(2+2m-2i) + a(a+2m-2p-2).
\end{multline*}
It remains to prove that $\pi_{\Lambda_{(k,a),p},\delta}=\pi_{\Lambda_{(h,b),p},\delta}$ if and only if $(k,a)=(h,b)$.
Suppose $0\le a<b \le 2$. Then
\begin{align*}
k(k+2m-2) + a(a+2m-2p-2)
    &= h(h+2m-2) + b(b+2m-2p-2)\\
(k-h)(k+h+2m-2)
    &= (b-a)\big(b+a+2m-2p-2\big).
\end{align*}
If $k>h$, $b>a$, since $k+h+2m-2>b+a+2m-2p-2$ then $0<k-h<b-a$. Hence $b-a=2$ and $k-h=1$, thus we have a contradiction since the left-hand side is odd and the right-hand side is even. Therefore, necessarily, $k=h$ and $b=a$, as asserted.

\smallskip

\noindent {Case (b).}
The proof is very similar to the previous one, so we only give the main ingredients.
The highest weights involved in $\widehat G_{\tau_{\mu_{p,q},\kappa}}$ have the form
$$
\Lambda_{(k,a_1,a_2),p} := k\varepsilon_1+2\varepsilon_2+\dots+2\varepsilon_p+a_1\varepsilon_{p+1} +\varepsilon_{p+2}+\dots+\varepsilon_{q}+a_2\varepsilon_{q+1},
$$
where $k\geq2$ and $0\leq a_1\leq 1\leq a_2\leq 2$. In this case,
\begin{multline*}
\lambda(C,\pi_{\Lambda_{(k,a_1,a_2),p},\delta})
    = k(k+2m-2) + \sum_{i=2}^p 2(2+2m-2i)  \\
    + a_1(a_1+2m-2p-2)  +\sum_{i=p+2}^q (1+2m-2i) + a_2(a_2+2m-2q-2).
\end{multline*}
Suppose $\lambda(C,\pi_{\Lambda_{(k,a_1,a_2),p},\delta}) = \lambda(C,\pi_{\Lambda_{(h,b_1,b_2),p},\delta})$ with $a_2<b_2$, i.e.\ $a_2=0$ and $b_2=1$.
One can check that
\begin{multline*}
(k-h)(k+h+2m-2)
    = (b_1-a_1)(b_1+a_1+2m-2p-2) + 1+2m-2q-2.
\end{multline*}
In case $b_1=a_1$ we arrive at a contradiction as above. If $a_1=1$ and $b_1=2$, then the right-hand side is equal to $4m-2(p+q)$, hence $k-h$ is an even positive integer, thus the right-hand side is greater than the left-hand side.
If $a_1=2$ and $b_1=1$, the right-hand side is equal to $-2(q-p+1)$ and again we arrive at a contradiction as before.
\end{proof}

\begin{rem}
Note that Proposition~\ref{prop3:mu=2} follows again from the fact that, for any $\lambda\in\R$, in formula \eqref{multld} at most one irreducible representation in $\widehat G_{\tau_{\mu,\kappa}}$ gives a contribution.
This is not true generically  for $\tau\in\widehat K$.
For instance, for $\tau = \tau_{\mu,\kappa}$ with $\mu=3\varepsilon_1$ and $\kappa=\pm1$, set
$\Lambda=2m\varepsilon_1$ and $\Lambda'=(2m-1)\varepsilon_1+3\varepsilon_2$, thus $\pi_{\Lambda,\delta}, \pi_{\Lambda',\delta}\in \widehat G_\tau$ for a single value of $\delta$ and we have
\begin{align*}
\lambda(C,\pi_{\Lambda,\delta})
    &= \langle\Lambda, \Lambda+2\rho\rangle = 2m\big(2m+2(m-1)\big) = 2n(n+1),\\
\lambda(C,\pi_{\Lambda',\delta})
    &= (2m-1)\big(2m-1+2(m-1)\big) + 3\big(3+2(m-2)\big)  \\
    &= n(n+n-1) + 3n = 2n(n+1).
\end{align*}
Therefore the eigenspace of $\Delta_{\tau_{\mu,\kappa},\Gamma}$ for the eigenvalue $\lambda = 2n(n+1)$ is equal to $\pi_{\Lambda,\delta}\oplus \pi_{\Lambda',\delta}$, which is not irreducible.
\end{rem}

\begin{rem}\label{rmk3:Hodge}
Let $\Omega_p(M)$ denote the space of differential forms of degree $p$ on a Riemannian compact manifolds $M$.
By the Hodge decomposition at degree $p$
\begin{equation}\label{eq3:hodge}
\Omega_p(M) = \mathcal H_p(M)\oplus \Omega_{p}'(M) \oplus \Omega_{p}''(M),
\end{equation}
where $\mathcal H_p(M)$ denotes the $p$-harmonic forms and  $\Omega_{p}'(M)$ and $\Omega_{p}''(M)$ denote the subspace of  \emph{closed} ($d\Omega_{p-1}(M)$) and \emph{coclosed forms} ($d^*\Omega_{p+1}(M)$)  respectively.
A subscript $\lambda\in\R$ in these sets will denote the restriction to the eigenspace associated to the eigenvalue $\lambda$.
Clearly $\Omega_p(M)_0= \mathcal H_p(M)$ and $\Omega_p(M)_\lambda = \Omega_{p}'(M)_\lambda \oplus \Omega_{p}''(M)_\lambda$ for any $\lambda\neq 0$.

In this case, Theorem~\ref{thm0:d_lambda_cpto} ensures that the sets $\Omega_{p}'(M)_\lambda$ and $\Omega_{p}''(M)_\lambda$ cannot  both be nonempty.
Moreover, the $p$-eigenspace associated to $\lambda\in \mathcal E_p$ (resp.\ $\mathcal E_{p+1}$) is contained in $\Omega_{p}'(M)_\lambda$ (resp.\ $\Omega_{p}''(M)_\lambda$).

Gornet and McGowan introduced  the notion of \emph{half-isospectrality} (see \cite[Rmk.~4.5]{GM})  meaning isospectrality with respect to $\Delta_{\tau_p,\Gamma}$ restricted to closed or coclosed $p$-forms.
They also showed several examples of half-isospectral lens spaces.
In a way similar to Corollary~\ref{coro0:compacto}~(i), we can give an equivalent formulation in terms of representations as follows:
\begin{quote}
  $\Gamma_1\ba S^n$ and $\Gamma_2\ba S^n$ are isospectral on closed (resp.\ coclosed) $p$-forms if and only if $n_{\Gamma_1}(\pi_{\Lambda_{k,p},\delta}) = n_{\Gamma_2}(\pi_{\Lambda_{k,p},\delta})$ (resp.\ $n_{\Gamma_1}(\pi_{\Lambda_{k,p+1},\delta}) = n_{\Gamma_2}(\pi_{\Lambda_{k,p+1},\delta})$) for every $k\in\N$.
\end{quote}
Now, this fact, together with Proposition~\ref{prop3:G_tau_p}, ensure that two spherical space forms are $p+1$-isospectral on closed forms if and only if they are $p$-isospectral on coclosed forms.
In particular, the examples of $p$-isospectral and not $p+1$-isospectral Lens spaces given in \cite{Ik} and \cite{GM}, are examples of manifolds $p+1$-isospectral on closed forms but not on coclosed forms.
\end{rem}

\section{Flat case}\label{sec:flat}

We now consider the flat case $X=\R^n$.
Then
\begin{equation}
  G=\Ot(n)\ltimes\R^n \simeq \Iso(\R^n),
\end{equation}
and $K=\Ot(n)$.
Let $\Gamma$ be a discrete cocompact  subgroup of $G$ acting freely on $\R^n$, i.e.\ a \emph{Bieberbach group}.
Any element $\gamma\in\Gamma\subset G$ decomposes uniquely as $\gamma = B L_b$, with $B\in K$ and $b\in\R^n$.
The matrix $B$ is called the rotational part of $\gamma$ and $L_b$ is called the translational part.
The subgroup $L_\Lambda$ of pure translations in $\Gamma$ is called the {\it translation lattice} of $\Gamma$ and $F:=\Ld\ba\G$ is the {\it point group} (or the {\it holonomy group}) of $\G$.

We need a description of the unitary dual of $G$.
We will use Mackey's method (see \cite[\S 5.4]{Wa}).
We identify $\widehat{\R}^n$ with $\R^n$ via the correspondence $\alpha \rightarrow \xi_\alpha(\,.\,) = e^{2\pi i \langle \alpha,\, .\, \rangle}$ for $\alpha \in \R^n$.
The group $G$ acts on $\widehat{\R}^n$ by $(g\cdot \xi_\alpha)(b)=\xi_\alpha(g^{-1}b)$.
For $\alpha\in\R^n$ we consider $K_\alpha=\{k\in K:k\cdot \xi_\alpha=\xi_\alpha\}$, the stabilizer of $\xi_\alpha$ in $K$.

For $\alpha\in\R^n$ and $(\sigma,V_\sigma)\in \widehat K_\alpha$, we consider the induced representation of $G$ given by
\begin{equation}\label{eq4:rep_E(n)}
(\pi_{\sigma,\alpha},W_{\sigma,\alpha}):=\operatorname{Ind}_{K_\alpha\ltimes\R^n}^{K\ltimes\R^n} (\sigma\otimes \xi_\alpha).
\end{equation}
Here, the space $W_{\sigma,\alpha}$ is the completion of the space
$$
C_{\sigma,\alpha}=\{f:G\to V_\sigma \text{ cont. }: f((k,b)g)=\sigma(k)\xi_\alpha(b)f(g),\;\;\forall k\in K_\alpha,\,b\in\R^n\}
$$
with respect to a canonical inner product. The action of $G$ on $W_{\sigma,\alpha}$ is by right translations.
Since $(\sigma\otimes \xi_\alpha,V_\sigma)$ is unitary, $(\pi_{\sigma,\alpha},W_{\sigma,\alpha})$ is a unitary representation of $G$.
It is well-known that $\pi_{\sigma,\alpha}$ is irreducible and, furthermore, every unitary representation of $G$ is unitarily equivalent to one of this form.

Note that if $\alpha=0$, then $K_\alpha=K=\Ot(n)$.
Furthermore, for $(\tau,V)\in \widehat K$, we have $\widetilde\tau:=\pi_{\tau,0}\simeq \tau\otimes\Id$, i.e.\ $\widetilde\tau(v)=\tau(v)$ for $v\in V$, therefore $(\widetilde\tau,V)\in\widehat G$ is finite dimensional.

On the other hand, if $\alpha\neq0$ and $\sigma\in\widehat K_\alpha$, then $\pi_{\sigma,\alpha}\simeq \pi_{\sigma,re_n}$ where $r=\|\alpha\|$.
We shall abbreviate $\pi_{\sigma,re_n}$ by writing $\pi_{\sigma,r}$ for $r\geq0$.
In this case, $K_\alpha= \left[\begin{smallmatrix}\Ot(n-1)\\&1\end{smallmatrix}\right] \simeq\Ot(n-1)$, when $r>0$.

Summing up, a full set of representatives of $\widehat G$ is given by
\begin{eqnarray}\label{eq4:dual_E(n)}
\widehat G = \{\pi_{\sigma,r}:\sigma\in\widehat{\Ot(n-1)},\; r>0\} \cup \{\widetilde\tau:\tau\in\widehat{\Ot(n)}\}
\end{eqnarray}

Now we determine $\widehat G_{\tau_p}$,
that is, the representations in $\widehat G$ such that its restriction to $\Ot(n)$ contains the $p$-exterior representations $\tau_p$ of $\Ot(n)$.
Recall that $\sigma_p$ denotes the complexified $p$-exterior representations of $\Ot(n-1)$.

\begin{lemma}\label{lem4:G_tau_p}
We have
\begin{align*}
\widehat G_{\tau_p} &=
    \{\pi_{\sigma_p,r},\;\pi_{\sigma_{p-1},r}: r>0 \}
    \cup   \{\widetilde \tau_p\}
\end{align*}
for all $p$.
Moreover $[\tau_p:\pi|_K]=1$ for every $\pi\in\widehat G_{\tau_p}$.
\end{lemma}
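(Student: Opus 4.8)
The plan is to compute the branching of each representation in $\widehat G$ (as listed in \eqref{eq4:dual_E(n)}) down to $K=\Ot(n)$ and check where $\tau_p$ occurs. There are two families to handle. First, for the finite-dimensional representations $\widetilde\tau$ with $\tau\in\widehat K$: since $\widetilde\tau\simeq\tau\otimes\Id$ as a $K$-representation, we have $\widetilde\tau|_K\simeq\tau$, so $\tau_p$ occurs in $\widetilde\tau|_K$ if and only if $\tau\simeq\tau_p$, i.e.\ $\widetilde\tau=\widetilde\tau_p$, and then with multiplicity exactly $1$ (as $\tau_p$ is irreducible, see Example~\ref{exam2:tau_p-O(m)}). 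This gives the isolated point $\widetilde\tau_p$ in the list.

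Second, for the representations $\pi_{\sigma,r}$ with $r>0$ and $\sigma\in\widehat{\Ot(n-1)}$: here the key tool is Frobenius reciprocity for the compact group $K$, applied to the restriction of an induced representation. Concretely, $\pi_{\sigma,r}=\operatorname{Ind}_{K_\alpha\ltimes\R^n}^{K\ltimes\R^n}(\sigma\otimes\xi_\alpha)$ with $\alpha=re_n$ and $K_\alpha\simeq\Ot(n-1)$. Restricting to $K$ and using that $K$ acts transitively on the sphere of radius $r$ with stabilizer $K_\alpha$, one gets $\pi_{\sigma,r}|_K\simeq\operatorname{Ind}_{K_\alpha}^{K}\sigma$ (this is the standard Mackey restriction formula; the $\R^n$-part drops out on restriction to $K$ because the orbit $K\cdot\alpha$ is a single $K$-orbit). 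Therefore
\begin{equation*}
[\tau_p:\pi_{\sigma,r}|_K]=[\tau_p:\operatorname{Ind}_{K_\alpha}^{K}\sigma]=[\sigma:\tau_p|_{K_\alpha}]=[\sigma:\sigma_p\oplus\sigma_{p-1}],
\end{equation*}
where the middle equality is Frobenius reciprocity and the last equality is the branching law $\tau_p|_{\Ot(n-1)}=\sigma_p\oplus\sigma_{p-1}$ from Proposition~\ref{prop2:tau_p|_O(n-1)}. Since $\sigma$ is irreducible, this multiplicity is $1$ precisely when $\sigma=\sigma_p$ or $\sigma=\sigma_{p-1}$, and $0$ otherwise. (When $p=0$ or $p=n$ one of the two summands is the zero representation, consistent with the conventions in Proposition~\ref{prop2:tau_p|_O(n-1)}; the sets $\{\pi_{\sigma_p,r}\}$ and $\{\pi_{\sigma_{p-1},r}\}$ then coincide with just one family.) This yields exactly $\widehat G_{\tau_p}=\{\pi_{\sigma_p,r},\pi_{\sigma_{p-1},r}:r>0\}\cup\{\widetilde\tau_p\}$, and in all cases the multiplicity $[\tau_p:\pi|_K]$ equals $1$.

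The only real point requiring care is justifying the restriction formula $\pi_{\sigma,r}|_K\simeq\operatorname{Ind}_{K_\alpha}^{K}\sigma$; everything else is a direct application of Frobenius reciprocity together with the already-established branching law. I would state this restriction identity explicitly, noting that it follows from Mackey's subgroup theorem for $G=K\ltimes\R^n$ restricted to $K$: the double cosets $K\backslash G/(K_\alpha\ltimes\R^n)$ are trivial since $G=K\cdot(K_\alpha\ltimes\R^n)$ (as $\R^n$ is already in the second factor and $K_\alpha\subset K$), so there is a single term $\operatorname{Ind}_{K\cap(K_\alpha\ltimes\R^n)}^{K}(\sigma\otimes\xi_\alpha)|_{K\cap\cdots}=\operatorname{Ind}_{K_\alpha}^{K}\sigma$. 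Once this is in place the lemma follows immediately, so the proof is short.
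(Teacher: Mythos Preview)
Your proof is correct and follows essentially the same approach as the paper: both handle the two families in $\widehat G$ separately, use $\pi_{\sigma,r}|_K\simeq\operatorname{Ind}_{K_\alpha}^K\sigma$ together with Frobenius reciprocity and the branching law of Proposition~\ref{prop2:tau_p|_O(n-1)} for the infinite-dimensional family, and observe $\widetilde\tau|_K=\tau$ for the finite-dimensional one. Your version is in fact more thorough, since you justify the restriction identity via Mackey's subgroup theorem, whereas the paper simply asserts it.
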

\begin{proof}
 Let $\pi_{\sigma,r}\in\widehat G$ with $\sigma\in\widehat {\Ot(n-1)}$ and $r>0$.
Since $\pi_{\sigma,r}|_K=\operatorname{Ind}_{K_\alpha}^K(\sigma)$ and $[\tau_p : \operatorname{Ind}_{K_\alpha}^K(\sigma)]=[\sigma:\tau_p|_{K_\alpha}]$ by Frobenius reciprocity, we have that $[\tau_p:\pi_{\sigma,r}|_K]>0$ if and only if $\sigma=\sigma_p,\sigma_{p-1}$ by Proposition~\ref{prop2:tau_p|_O(n-1)}.

Now if $\widetilde \tau\in \widehat G$ with $\tau\in \widehat K$, then $\widetilde\tau|_{K}=\tau$, it follows that $[\tau_p,\widetilde\tau|_K]>0$ if and only if $\tau=\tau_p$.
\end{proof}

If $e_1, \ldots, e_n$ is the canonical basis of $\R^n$, the operator $C=\sum_{i=1}^n e_i^2 \in  U(\mathfrak g)$  descends to the Hodge-Laplace operator $\Delta_{\tau_p,\Gamma}$ on $p$-forms of $\G\ba\R^n \simeq \G\ba \Iso(\R^n)/\Ot(n)$ (see~Subsection~\ref{subsec:bundles}).
The following lemma tells us how $\Delta_{\tau_p,\Gamma}$ operates on any $\pi\in\widehat G$.

\begin{lemma}\label{lem4:Casimir_on_pi}
The element $C\in U(\mathfrak g)$ acts on $\pi\in\widehat G$ by multiplication by a scalar $\lambda(C,\pi)$ given as follows:
\begin{equation*}\label{eq4:Casimir_on_pi}
\lambda(C,\pi) =
\begin{cases}
  0&\;\text{for }\pi=\widetilde\tau,\\
  -4\pi^2\|\alpha\|^2 &\;\text{for }\pi=\pi_{\sigma,\alpha},\;\alpha\neq0.
\end{cases}
\end{equation*}
\end{lemma}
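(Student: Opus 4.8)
The plan is to compute the action of the Casimir-type element $C=\sum_{i=1}^n e_i^2\in U(\mathfrak g)$ directly on each of the two families of irreducible representations appearing in the list \eqref{eq4:dual_E(n)}, using the explicit realization of $\pi_{\sigma,\alpha}$ as an induced representation. The key observation is that $C$ lies in the enveloping algebra of the abelian ideal $\R^n\subset\mathfrak g$, so its action is entirely governed by how the translation subgroup acts.

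First I would treat the finite-dimensional representations $\widetilde\tau=\pi_{\tau,0}$. Here, by the remark preceding the lemma, $\widetilde\tau\simeq\tau\otimes\Id$, meaning the translation subgroup $\R^n$ acts trivially (equivalently, $\widetilde\tau$ factors through the projection $G\to K$). Since each $e_i\in\R^n$ acts as $0$ on the representation space, $C=\sum_i e_i^2$ acts as $0$ as well, giving $\lambda(C,\widetilde\tau)=0$. This is immediate.

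Next I would handle $\pi_{\sigma,\alpha}$ for $\alpha\neq0$, realized on (the completion of) the space $C_{\sigma,\alpha}$ of functions $f:G\to V_\sigma$ with $f((k,b)g)=\sigma(k)\xi_\alpha(b)f(g)$, with $G$ acting by right translation. For $v\in\R^n$, the action is $(e_i\cdot f)(g)=\frac{d}{dt}\big|_0 f(g\exp(-te_i))$. The standard computation for induced representations is to evaluate at $g=e$: using the covariance relation and the identity $\xi_\alpha(b)=e^{2\pi i\langle\alpha,b\rangle}$, one finds that $\exp(te_i)$ acts on the ``lowest'' vectors by the character $\xi_\alpha$, so $e_i$ acts by multiplication by $2\pi i\langle\alpha,e_i\rangle$ (up to sign conventions, which must be fixed consistently with the definition of the $\mathfrak g$-action in Subsection~\ref{subsec:bundles}). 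Consequently $e_i^2$ acts by $(2\pi i\langle\alpha,e_i\rangle)^2=-4\pi^2\langle\alpha,e_i\rangle^2$, and summing over $i$ gives $\lambda(C,\pi_{\sigma,\alpha})=-4\pi^2\sum_i\langle\alpha,e_i\rangle^2=-4\pi^2\|\alpha\|^2$. One should note that $C$ acts by a scalar by Schur's lemma since $\pi_{\sigma,\alpha}$ is irreducible and $C$ is central in $U(\mathfrak g)$ (it is $K$-invariant and $\R^n$-invariant, hence $\G$-invariant, as already observed when $C$ was introduced), so it suffices to verify the claimed value on a single nonzero vector; choosing a vector supported appropriately over the identity coset makes the character computation transparent.

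The main obstacle is bookkeeping of signs and normalizations: matching the convention $(X\cdot f)(x)=\frac{d}{dt}\big|_0 f(\exp(-tX)x)$ from Subsection~\ref{subsec:bundles} against the right-translation action defining $\pi_{\sigma,\alpha}$, and confirming that the identification $\widehat{\R}^n\cong\R^n$ via $\alpha\mapsto\xi_\alpha(\cdot)=e^{2\pi i\langle\alpha,\cdot\rangle}$ produces exactly the factor $-4\pi^2\|\alpha\|^2$ rather than, say, $-\|\alpha\|^2$ or a sign-flipped version. Since the final formula $\lambda=4\pi^2\|v\|^2$ in Theorem~\ref{thm0:d_lambda_flat} (with the Laplace--Beltrami convention making $\Delta$ nonnegative, so $\Delta=-C$ effectively) pins down the normalization, the computation is forced; but one must carry the constants carefully. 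No genuine representation-theoretic difficulty arises beyond this: the whole content is that $C\in U(\R^n)$ acts through the infinitesimal character $\xi_\alpha$ of the translation lattice.
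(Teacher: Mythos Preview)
Your proposal is correct and follows essentially the same approach as the paper: both compute directly using the realization of $\pi_{\sigma,\alpha}$ on $C_{\sigma,\alpha}$ and the covariance relation to reduce to the character $\xi_\alpha$. The only cosmetic difference is that the paper evaluates at a general point $(k,v)$, obtaining $e_i\cdot f(k,v)=2\pi i\langle k^{-1}\alpha,e_i\rangle f(k,v)$ and then uses $\sum_i\langle k^{-1}\alpha,e_i\rangle^2=\|\alpha\|^2$ to see the scalar action of $C$ directly, whereas you evaluate at the identity and invoke Schur's lemma; both are valid and the content is identical.
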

\begin{proof}
In the first case   $\widetilde\tau(k,v)=\tau(k)$, for any $k\in K$, $v\in\R^n$.
If $X\in\R^n$,
$$
\widetilde\tau(X)(k,v)=\left.\frac{d}{dt}\right|_{0} \widetilde\tau (k,v+tX)
=\left.\frac{d}{dt}\right|_{0}\tau(k)=0.
$$
If $\pi=\pi_{\sigma,\alpha}$ with $\alpha\neq0$ and $f\in C_{\sigma,\alpha}$, then
\begin{align*}
\pi_{\sigma,\alpha}(X)f(k,v)
    &=\left.\frac{d}{dt}\right|_{0} f(k,v+tX)
    =\left.\frac{d}{dt}\right|_{0} f((1,tk\cdot X)\cdot(k,v))\\
    &=\left.\frac{d}{dt}\right|_{0} e^{2\pi i t \langle \alpha,(k\cdot X)\rangle} f(k,v)
    =2\pi i\langle k^{-1}\cdot\alpha,X\rangle f(k,v).
\end{align*}
Thus $\displaystyle
\pi_{\sigma,\alpha}(C)f (k,v)= -4\pi^2\sum_{i=1}^n \langle k^{-1}\alpha,e_i\rangle^2 f(k,v) =-4\pi^2\|\alpha\|^2 f(k,v).$
\end{proof}

Now we are in a condition to prove the results in the Introduction in the flat case.

\begin{proof}[Proof of Theorem~\ref{thm0:d_lambda_flat}]
By Proposition~\ref{prop2:multip_lambda}, given an eigenvalue $\lambda\in\R$ of the Hodge-Laplace operator on $p$-forms $\Delta_{\tau_p,\Gamma}$ on $\Gamma\ba\R^n$,  the multiplicity $d_\lambda(\tau_p,\Gamma)$ is given by $\sum\, n_{\Gamma}(\pi) \; [\tau_p:\pi|_K]$, where the sum is over every $\pi\in\widehat G_{\tau_p}$ such that $-\lambda(C,\pi)=\lambda$.
Now, by using Lemma~\ref{lem4:G_tau_p} and Lemma~\ref{lem4:Casimir_on_pi} we obtain that
\begin{equation}\label{eq4:mult_p}
d_\lambda(\tau_p,\Gamma) =
\begin{cases}
n_\Gamma(\widetilde\tau_p) &\;\text{if }\lambda=0,\\
n_\Gamma(\pi_{\sigma_p,\sqrt\lambda/2\pi})+ n_\Gamma(\pi_{\sigma_{p-1},\sqrt\lambda/2\pi})
    &\;\text{if }\lambda>0,
\end{cases}
\end{equation}
and thus Theorem~\ref{thm0:d_lambda_flat} follows.
\end{proof}

We will use the following Lemma to prove Theorem~\ref{thm0:A} in the flat case and other consequences in Corollary~\ref{coro4:corolarios}.

\begin{lemma}\label{lem4:p-1-equiv,p-iso0>p-equiv}
If $\,\Gamma_1$ and $\Gamma_2$ are  $\tau_{p-1}$-equivalent (or $\tau_{p+1}$-equivalent) and $\Gamma_1\ba\R^n$ and $\Gamma_2\ba\R^n$ are $p$-isospectral, then $\Gamma_1$ and $\Gamma_2$ are $\tau_{p}$-equivalent.
\end{lemma}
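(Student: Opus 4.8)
The plan is to use the description of $\widehat{G}_{\tau_p}$ from Lemma~\ref{lem4:G_tau_p} together with the multiplicity formula of Theorem~\ref{thm0:d_lambda_flat}. By Lemma~\ref{lem4:G_tau_p}, the only representations contributing to $\tau_p$-equivalence are $\widetilde\tau_p$ and the $\pi_{\sigma_p,r}$, $\pi_{\sigma_{p-1},r}$ for $r>0$. So to prove $\Gamma_1$ and $\Gamma_2$ are $\tau_p$-equivalent, I must show $n_{\Gamma_1}(\pi)=n_{\Gamma_2}(\pi)$ for each of these three families.

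First I would handle the $r>0$ part assuming $\tau_{p-1}$-equivalence. Since $\Gamma_1$ and $\Gamma_2$ are $\tau_{p-1}$-equivalent, in particular $n_{\Gamma_1}(\pi_{\sigma_{p-1},r}) = n_{\Gamma_2}(\pi_{\sigma_{p-1},r})$ for every $r>0$, because $\pi_{\sigma_{p-1},r}\in\widehat G_{\tau_{p-1}}$ by Lemma~\ref{lem4:G_tau_p} (indeed $\widehat G_{\tau_{p-1}}$ contains the family $\pi_{\sigma_{p-1},r}$). Now fix $r>0$ and set $\lambda=4\pi^2 r^2>0$. By Theorem~\ref{thm0:d_lambda_flat}, $p$-isospectrality gives
\begin{equation*}
n_{\Gamma_1}(\pi_{\sigma_p,r}) + n_{\Gamma_1}(\pi_{\sigma_{p-1},r}) = n_{\Gamma_2}(\pi_{\sigma_p,r}) + n_{\Gamma_2}(\pi_{\sigma_{p-1},r}).
\end{equation*}
Subtracting the already-established equality of the $\sigma_{p-1}$-terms yields $n_{\Gamma_1}(\pi_{\sigma_p,r}) = n_{\Gamma_2}(\pi_{\sigma_p,r})$ for all $r>0$. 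The symmetric argument, using $\tau_{p+1}$-equivalence instead, gives the same conclusion: $\tau_{p+1}$-equivalence forces equality of the $\sigma_p$-terms (since $\pi_{\sigma_p,r}\in\widehat G_{\tau_{p+1}}$), and then $p$-isospectrality forces equality of the $\sigma_{p-1}$-terms. Either way, both families $\pi_{\sigma_p,r}$ and $\pi_{\sigma_{p-1},r}$ match for all $r>0$.

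It remains to deal with the finite-dimensional representation $\widetilde\tau_p$. Here $n_{\Gamma_i}(\widetilde\tau_p) = d_0(\tau_p,\Gamma_i) = \beta_p(\Gamma_i\ba\R^n)$ is the $p$-th Betti number, which is a heat-invariant (it equals $\lim_{t\to\infty}\operatorname{Tr} e^{-t\Delta_{\tau_p,\Gamma_i}}$, or is read off as the multiplicity of the eigenvalue $0$). Since $\Gamma_1\ba\R^n$ and $\Gamma_2\ba\R^n$ are $p$-isospectral, the multiplicity of the eigenvalue $0$ is the same, hence $\beta_p(\Gamma_1\ba\R^n) = \beta_p(\Gamma_2\ba\R^n)$, so $n_{\Gamma_1}(\widetilde\tau_p) = n_{\Gamma_2}(\widetilde\tau_p)$. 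Combining the three cases, $n_{\Gamma_1}(\pi) = n_{\Gamma_2}(\pi)$ for every $\pi\in\widehat G_{\tau_p}$, i.e.\ $\Gamma_1$ and $\Gamma_2$ are $\tau_p$-equivalent.

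I do not expect a serious obstacle here; the argument is a clean bookkeeping consequence of Lemma~\ref{lem4:G_tau_p}, Lemma~\ref{lem4:Casimir_on_pi} and Theorem~\ref{thm0:d_lambda_flat}. The only point requiring a moment's care is the observation that the $\lambda=0$ eigenspace contributes exactly $n_\Gamma(\widetilde\tau_p)$ and nothing else (which is immediate from Lemma~\ref{lem4:Casimir_on_pi}, since $\lambda(C,\pi_{\sigma,\alpha}) = -4\pi^2\|\alpha\|^2 = 0$ only when $\alpha=0$), so that $p$-isospectrality does control the finite-dimensional piece via the Betti number. Everything else is just matching terms of the form ``$A+B = A'+B'$ with $B=B'$ implies $A=A'$.''
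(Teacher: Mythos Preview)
Your proof is correct and follows essentially the same approach as the paper: use $\tau_{p-1}$-equivalence to match the $\pi_{\sigma_{p-1},r}$ terms, use $p$-isospectrality and the multiplicity formula \eqref{eq4:mult_p} to match both $\widetilde\tau_p$ (via the $\lambda=0$ eigenvalue) and the sums $n_\Gamma(\pi_{\sigma_p,r})+n_\Gamma(\pi_{\sigma_{p-1},r})$, then subtract. If anything, your write-up is slightly more careful in citing Lemma~\ref{lem4:G_tau_p} (the paper inadvertently cites Proposition~\ref{prop3:G_tau_p} from the spherical section) and in explaining why the $\lambda=0$ case isolates $\widetilde\tau_p$.
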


\begin{proof}
Since $\Gamma_1$ and $\Gamma_2$ are $\tau_{p-1}$-equivalent we have that $n_{\Gamma_1}(\pi_{\sigma_{p-1},r}) = n_{\Gamma_2}(\pi_{\sigma_{p-1},r})$ for every $r>0$ by Proposition~\ref{prop3:G_tau_p}.
On the other hand, since $\Gamma_1\ba\R^n$ and $\Gamma_2\ba\R^n$ are $p$-isospectral we have that $n_{\Gamma_1}(\widetilde\tau_{p})=n_{\Gamma_2}(\widetilde\tau_{p})$ and
$$
n_{\Gamma_1}(\pi_{\sigma_{p},r}) + n_{\Gamma_1}(\pi_{\sigma_{p-1},r})= n_{\Gamma_2}(\pi_{\sigma_{p},r})+ n_{\Gamma_2}(\pi_{\sigma_{p-1},r})
$$
for any $r>0$, by \eqref{eq4:mult_p}.
These three facts taken together, clearly imply $\tau_{p}$-equivalence.
The assertion assuming $\tau_{p+1}$-equivalence follows in a similar way.
\end{proof}

\begin{proof} [Proof of Theorem~\ref{thm0:A} (flat case)]\label{proof4:thm0}
The fact that $\tau_p$-equivalence implies $p$-iso\-spectra\-lity is clear in light of Proposition~\ref{prop2:tau-equiv=>tau-iso}.
For the converse assertion, we proceed by induction.
Lemma~\ref{lem4:p-1-equiv,p-iso0>p-equiv} for $p=0$ says that $0$-isospectrality implies $\tau_0$-equivalence.
Now assume that the manifolds are $q$-isospectral for every $0\leq q\leq p$, thus we have that the groups are $\tau_{q}$-equivalent for every $0\leq q\leq p-1$ by the induction hypothesis.
In particular we have $\tau_{p-1}$-equivalence and $p$-isospectrality, hence Lemma~\ref{lem4:p-1-equiv,p-iso0>p-equiv} implies $\tau_p$-equivalence, which completes the proof.
\end{proof}

\begin{rem}
One can also prove the above result for intervals decreasing from $n$, that is:
\emph{$q$-isospectrality for every $p\leq q\leq n$ is equivalent to $\tau_q$-equivalence for every $p\leq q\leq n$.}
\end{rem}

We can also obtain from Theorem~\ref{thm0:d_lambda_flat} several other consequences relating $p$-isospectrality and $\tau_p$-equivalence. Given a compact $n$-manifold $M$,
$\beta_{p}(M)$ denotes the $p^{\textrm{th}}$-Betti number of $M$ and one has that $\beta_{p}(M) = d_0(\tau_p,M)$, the multiplicity of the eigenvalue $0$ of the Hodge-Laplace operator on $p$-forms of $M$.

\begin{prop}\label{prop4:other_consequences}
Let $\Gamma_1$ and $\Gamma_2$ be Bieberbach groups and let $\Gamma_1\ba\R^n$ and $\Gamma_2\ba\R^n$ be the corresponding flat Riemannian manifolds.
Then the following assertions hold.
\smallskip
\begin{enumerate}
\item\label{item4:1-equiv=>0,1-isosp}
    If $\,\G_1$ and $\G_2$ are $\tau_1$-equivalent, then $\G_1\ba\R^n$ and $\G_2\ba\R^n$  are $0$ and $1$-isospectral.
\smallskip

\item\label{item4:n-1-equiv=>n-1,n-isosp}
    If $\,\G_1$ and $\G_2$ are $\tau_{n-1}$-equivalent and $\beta_{n}(\G_1\ba\R^n)=\beta_{n}(\G_2\ba\R^n)$, then  $\G_1\ba\R^n$ and $\G_2\ba\R^n$  are $n$ and $n-1$-isospectral.
\smallskip

\item\label{item4:k,k+2-equiv=>k+1-equiv}
If $\,\G_1$ and $\G_2$ are $\tau_{p-1}$ and $\tau_{p+1}$-equivalent and $\beta_{p}(\G_1\ba\R^n) = \beta_{p}(\G_2\ba\R^n)$, then $\G_1$ and $\G_2$ are also $\tau_{p}$-equivalent, hence $\G_1\ba\R^n$ and $\G_2\ba\R^n$ are $p-1$, $p$ and $p+1$-isospectral.
\end{enumerate}
\end{prop}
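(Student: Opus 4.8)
The plan is to deduce all three parts from the explicit form of $\widehat G_{\tau_p}$ given in Lemma~\ref{lem4:G_tau_p} together with the multiplicity formula of Theorem~\ref{thm0:d_lambda_flat}; in each case I would reduce the assertion to a statement about the coefficients $n_\Gamma(\pi)$ and then quote Proposition~\ref{prop2:tau-equiv=>tau-iso} to pass back to isospectrality. The unifying observation is that $\widehat G_{\tau_p}$ consists of the finite-dimensional representation $\widetilde\tau_p$, whose multiplicity is $n_\Gamma(\widetilde\tau_p)=\beta_p(\Gamma\ba\R^n)$, together with the two families $\{\pi_{\sigma_p,r}:r>0\}$ and $\{\pi_{\sigma_{p-1},r}:r>0\}$, and that — again by Lemma~\ref{lem4:G_tau_p} — the first family is contained in $\widehat G_{\tau_{p+1}}$ and the second in $\widehat G_{\tau_{p-1}}$. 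So knowledge of the appropriate neighbouring $\tau$-equivalences, plus one topological datum, determines every $n_\Gamma(\pi)$ with $\pi\in\widehat G_{\tau_p}$.

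For (i): $\widetilde\tau_0$ is the trivial representation of $G$, hence $n_{\Gamma_1}(\widetilde\tau_0)=n_{\Gamma_2}(\widetilde\tau_0)=1$ automatically; and since $\sigma_{-1}$ is the zero representation of $\Ot(n-1)$, Lemma~\ref{lem4:G_tau_p} gives $\widehat G_{\tau_0}=\{\pi_{\sigma_0,r}:r>0\}\cup\{\widetilde\tau_0\}\subset\widehat G_{\tau_1}\cup\{\widetilde\tau_0\}$. Thus $\tau_1$-equivalence already forces $n_{\Gamma_1}(\pi)=n_{\Gamma_2}(\pi)$ for all $\pi\in\widehat G_{\tau_0}$, i.e.\ $\Gamma_1$ and $\Gamma_2$ are $\tau_0$-equivalent, and Proposition~\ref{prop2:tau-equiv=>tau-iso} yields $0$- and $1$-isospectrality. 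Part (ii) I would treat by the mirror argument at the top degree: $\sigma_n$ is the zero representation, so $\widehat G_{\tau_n}=\{\pi_{\sigma_{n-1},r}:r>0\}\cup\{\widetilde\tau_n\}\subset\widehat G_{\tau_{n-1}}\cup\{\widetilde\tau_n\}$, and $n_\Gamma(\widetilde\tau_n)=\beta_n(\Gamma\ba\R^n)$; now $\tau_{n-1}$-equivalence together with the hypothesis $\beta_n(\Gamma_1\ba\R^n)=\beta_n(\Gamma_2\ba\R^n)$ give $\tau_n$-equivalence and hence $n$- and $(n-1)$-isospectrality. The Betti number must be assumed here rather than deduced, since a compact flat manifold need not be orientable.

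For (iii): by Lemma~\ref{lem4:G_tau_p}, $\widehat G_{\tau_p}=\{\pi_{\sigma_p,r},\pi_{\sigma_{p-1},r}:r>0\}\cup\{\widetilde\tau_p\}$. The $\tau_{p-1}$-equivalence gives $n_{\Gamma_1}(\pi_{\sigma_{p-1},r})=n_{\Gamma_2}(\pi_{\sigma_{p-1},r})$ for all $r>0$; the $\tau_{p+1}$-equivalence gives $n_{\Gamma_1}(\pi_{\sigma_p,r})=n_{\Gamma_2}(\pi_{\sigma_p,r})$ for all $r>0$; and the hypothesis $\beta_p(\Gamma_1\ba\R^n)=\beta_p(\Gamma_2\ba\R^n)$ gives $n_{\Gamma_1}(\widetilde\tau_p)=n_{\Gamma_2}(\widetilde\tau_p)$. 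Together these say precisely that $\Gamma_1$ and $\Gamma_2$ are $\tau_p$-equivalent; then $(p-1)$- and $(p+1)$-isospectrality follow from the hypotheses and $p$-isospectrality from the $\tau_p$-equivalence just produced, all via Proposition~\ref{prop2:tau-equiv=>tau-iso}. This argument also recovers Lemma~\ref{lem4:p-1-equiv,p-iso0>p-equiv}, trading its $p$-isospectrality hypothesis for the single equality of $p$-th Betti numbers.

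Regarding difficulty: there is no hard step once Theorem~\ref{thm0:d_lambda_flat} is available — the whole proposition is bookkeeping with $\widehat G_{\tau_p}$. The only places needing care are the degenerate ends, where one must remember that $\sigma_{-1}$ and $\sigma_n$ vanish and that $\widetilde\tau_0$ is the trivial $G$-representation (so its multiplicity is forced to be $1$), together with the point that at the top degree the relevant Betti number is genuine extra input because flat manifolds may be non-orientable — which is exactly why (ii) and (iii) carry a Betti number hypothesis while (i) does not.
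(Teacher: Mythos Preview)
Your proposal is correct and follows essentially the same approach as the paper: both proofs rely on Lemma~\ref{lem4:G_tau_p} to identify $\widehat G_{\tau_p}$, use the inclusion $\pi_{\sigma_{p-1},r}\in\widehat G_{\tau_{p-1}}$ and $\pi_{\sigma_p,r}\in\widehat G_{\tau_{p+1}}$, and invoke the Betti number identity $n_\Gamma(\widetilde\tau_p)=\beta_p$. The only cosmetic difference is that in (i) you pass explicitly through $\tau_0$-equivalence before deducing $0$-isospectrality, whereas the paper goes directly to $d_\lambda(\tau_0,\Gamma_1)=d_\lambda(\tau_0,\Gamma_2)$ via the multiplicity formula; the underlying observation is identical.
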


\begin{proof}
We will use repeatedly the facts
\begin{align}
\widehat G_{\tau_p}
    &=  \{ \pi_{\sigma_p,r}, \pi_{\sigma_{p-1},r}  : r>0 \}\cup  \{ \widetilde\tau_p\}, \label{hatGtau}\tag{$*$} \\
d_\lambda(\tau_p,\Gamma_i)
    &=
    \begin{cases}
    n_{\Gamma_i}(\widetilde\tau_p) &\;\text{if }\lambda=0,\\
    n_{\Gamma_i}(\pi_{\sigma_p,\sqrt\lambda/2\pi}) + n_{\Gamma_i}(\pi_{\sigma_{p-1},\sqrt\lambda/2\pi})
    &\;\text{if }\lambda>0.
    \end{cases}\label{d_lambda}\tag{$**$}
\end{align}
from Lemma~\ref{lem4:G_tau_p} and Theorem~\ref{thm0:d_lambda_flat}.

We first prove \ref{item4:1-equiv=>0,1-isosp}.
Suppose that $\Gamma_1$ and $\Gamma_2$ are $\tau_1$-equivalent, then $\Gamma_1\ba\R^n$ and $\Gamma_2\ba\R^n$ are $1$-isospectral by Proposition~\ref{prop2:tau-equiv=>tau-iso}.
Since $\pi_{\sigma_0,r}\in\widehat G_{\tau_1}$ for $r> 0$, \eqref{hatGtau} and \eqref{d_lambda} imply that $d_\lambda(\tau_0,\Gamma_1)=d_\lambda(\tau_0,\Gamma_2)$ for every $\lambda>0$, hence $\G_1\ba\R^n$ and $\G_2\ba\R^n$ are also $0$-isospectral, since $d_0(\tau_0,\Gamma_1)=d_0(\tau_0,\Gamma_2)=1$.

Assertion (ii) follows in a similar way by using that $d_0(\tau_n,\Gamma_i)=\beta_{n}(\Gamma_i\ba\R^n)$.

Relative to \ref{item4:k,k+2-equiv=>k+1-equiv} if $\Gamma_1$ and $\Gamma_2$ are $\tau_{p-1}$ and $\tau_{p+1}$ equivalent, then on the one hand, $n_{\Gamma_1}(\pi_{\sigma_{p-1},r})=n_{\Gamma_2}(\pi_{\sigma_{p-1},r})$ for every $r>0$ since $\pi_{\sigma_{p-1},r}\in\widehat G_{\tau_{p-1}}$ and, on the other hand, since $\pi_{\sigma_{p},r}\in\widehat G_{\tau_{p+1}}$, then  $n_{\Gamma_1}(\pi_{\sigma_{p},r})=n_{\Gamma_2}(\pi_{\sigma_{p},r})$ for every $r>0$.
Finally, the equality of the $p^{\textrm{th}}$-Betti numbers implies that $n_{\Gamma_1}(\widetilde\tau_{p} )=n_{\Gamma_2}(\widetilde\tau_{p})$ by \eqref{d_lambda}, thus $\Gamma_1$ and $\Gamma_2$ are $\tau_{p}$-equivalent.
\end{proof}

Note that the condition $\beta_{n}(\G_1\ba\R^n) = \beta_{n}(\G_2\ba\R^n)$ in Proposition~\ref{prop4:other_consequences}~\ref{item4:n-1-equiv=>n-1,n-isosp} is equivalent to $\Gamma_1\ba\R^n$ and $\Gamma_2\ba\R^n$ being both orientable or both non-orientable.
A flat manifold $\Gamma\ba\R^n$ is orientable if and only if $\Gamma\subset\SO(n)\ltimes \R^n$.

The next result  follows immediately from Lemma~\ref{lem4:p-1-equiv,p-iso0>p-equiv} and will be applied in explicit examples.

\begin{coro}\label{coro4:corolarios}
Let $\Gamma_1$ and $\Gamma_2$ be Bieberbach groups.
If $\,\Gamma_1\ba\R^n$ and $\Gamma_2\ba\R^n$ are $p$-isospectral for every $p\in\{1,2,\dots,k\}$ and are not $0$-isospectral, then $\Gamma_1$ and $\Gamma_2$ are not $\tau_p$-equivalent for any $p\in\{0,1,2,\dots,k+1\}$.
Similarly, if $\beta_{n}(\G_1\ba\R^n)=\beta_{n}(\G_2\ba\R^n)$ and $\Gamma_1\ba\R^n, \Gamma_2\ba\R^n$ are $p$-isospectral for every $p\in\{n-k,\dots, n-2, n-1\}$ and are not $n$-isospectral, then $\Gamma_1$ and $\Gamma_2$ are not $\tau_p$-equivalent for any $p\in\{n-k-1,n-k, \dots,n\}$
\end{coro}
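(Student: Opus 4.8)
The plan is to argue by contradiction, using Lemma~\ref{lem4:p-1-equiv,p-iso0>p-equiv} as a one-step descent (resp.\ ascent) and closing the induction at the endpoint with Proposition~\ref{prop4:other_consequences}. So suppose, contrary to the first assertion, that $\Gamma_1$ and $\Gamma_2$ are $\tau_j$-equivalent for some $j\in\{0,1,\dots,k+1\}$. If $j=0$, then by Proposition~\ref{prop2:tau-equiv=>tau-iso} the manifolds $\Gamma_1\ba\R^n$ and $\Gamma_2\ba\R^n$ are $0$-isospectral, contradicting the hypothesis. If $j\ge 1$, I would descend to $\tau_1$-equivalence: since the manifolds are $p$-isospectral for every $p\in\{1,\dots,j-1\}\subseteq\{1,\dots,k\}$ (using $j-1\le k$), repeated use of Lemma~\ref{lem4:p-1-equiv,p-iso0>p-equiv} in the form ``$\tau_{p+1}$-equivalent and $p$-isospectral $\Rightarrow$ $\tau_p$-equivalent'', with $p$ running through $j-1,j-2,\dots,1$, gives that $\Gamma_1$ and $\Gamma_2$ are $\tau_1$-equivalent (when $j=1$ the range is empty and there is nothing to do). Then Proposition~\ref{prop4:other_consequences}~\ref{item4:1-equiv=>0,1-isosp} forces $\Gamma_1\ba\R^n$ and $\Gamma_2\ba\R^n$ to be $0$-isospectral, again a contradiction.

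For the second assertion I would run the symmetric argument under the correspondence $p\leftrightarrow n-p$. Suppose $\Gamma_1$ and $\Gamma_2$ are $\tau_j$-equivalent for some $j\in\{n-k-1,\dots,n\}$. If $j=n$, Proposition~\ref{prop2:tau-equiv=>tau-iso} gives $n$-isospectrality, a contradiction. If $j\le n-1$, I would ascend to $\tau_{n-1}$-equivalence by applying Lemma~\ref{lem4:p-1-equiv,p-iso0>p-equiv} in the form ``$\tau_{p-1}$-equivalent and $p$-isospectral $\Rightarrow$ $\tau_p$-equivalent'' with $p$ running through $j+1,j+2,\dots,n-1$; the required $p$-isospectrality for $p\in\{j+1,\dots,n-1\}\subseteq\{n-k,\dots,n-1\}$ holds by hypothesis, since $j\ge n-k-1$. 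Finally, because $\beta_n(\Gamma_1\ba\R^n)=\beta_n(\Gamma_2\ba\R^n)$, Proposition~\ref{prop4:other_consequences}~\ref{item4:n-1-equiv=>n-1,n-isosp} yields $n$-isospectrality, the desired contradiction.

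The one point that needs care---and essentially the only ``obstacle''---is that the induction cannot be closed at the endpoint $j=1$ (resp.\ $j=n-1$) by Lemma~\ref{lem4:p-1-equiv,p-iso0>p-equiv} alone, since that step would take $0$-isospectrality (resp.\ $n$-isospectrality) as an input, which is exactly what we are trying to contradict; this is precisely why Proposition~\ref{prop4:other_consequences}~\ref{item4:1-equiv=>0,1-isosp} (resp.\ \ref{item4:n-1-equiv=>n-1,n-isosp}, where the hypothesis on $\beta_n$ enters) must be invoked at the last step. Everything else is a bookkeeping check on the index ranges $\{1,\dots,j-1\}\subseteq\{1,\dots,k\}$ and $\{j+1,\dots,n-1\}\subseteq\{n-k,\dots,n-1\}$, which hold exactly on the stated intervals $j\in\{0,\dots,k+1\}$ and $j\in\{n-k-1,\dots,n\}$.
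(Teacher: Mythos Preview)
Your argument is correct and follows the route the paper has in mind: the paper simply states that the corollary ``follows immediately from Lemma~\ref{lem4:p-1-equiv,p-iso0>p-equiv}'' without writing out any details, and your proof is precisely the unpacking of that claim via the contrapositive descent/ascent. Your observation that Lemma~\ref{lem4:p-1-equiv,p-iso0>p-equiv} alone does not close the chain at the endpoint---one needs Proposition~\ref{prop4:other_consequences}~\ref{item4:1-equiv=>0,1-isosp} (resp.\ \ref{item4:n-1-equiv=>n-1,n-isosp}) to pass from $\tau_1$-equivalence to $0$-isospectrality (resp.\ from $\tau_{n-1}$-equivalence plus equal $\beta_n$ to $n$-isospectrality)---is a genuine detail that the paper's one-line attribution glosses over, so your write-up is in fact more careful than the original.
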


\begin{rem}\label{rmk4:Hodge}
We now study the Hodge decomposition of a compact flat manifold as in Remark~\ref{rmk3:Hodge}.
In this case, Theorem~\ref{thm0:d_lambda_flat} implies that $\mathcal H_p(M)_0$ is the $0$-eigenspace associated to $\widetilde \tau_p$ and for $\lambda\neq0$, again we have  $\Omega_p(M)_\lambda=\Omega_{p}'(M)_\lambda \oplus \Omega_{p}''(M)_\lambda$, where both can be nonempty at the same time.

Unlike the notion of $p$-isospectrality, we have an equivalent definition of compact flat manifolds $p$-isospectral on closed forms (resp.\ coclosed forms) in terms of representations. Namely from Lemma~\ref{lem4:G_tau_p} one can see that
\begin{quote}
  $\Gamma_1\ba \R^n$ and $\Gamma_2\ba \R^n$ are isospectral on closed (resp.\ coclosed) $p$-forms if and only if $n_{\Gamma_1}(\pi_{\sigma_{p},r}) = n_{\Gamma_2}(\pi_{\sigma_{p},r})$ (resp.\ $n_{\Gamma_1}(\pi_{\sigma_{p-1},r}) = n_{\Gamma_2}(\pi_{\sigma_{p-1},r})$) for every $r>0$.
\end{quote}

In this way we can find many examples of compact flat manifolds that are \emph{half-isospectral} but not isospectral.
For instance, if  they are $0$-isospectral and not $1$-isospectral, then they are $1$-isospectral on closed forms but not on coclosed forms.
Examples of this type can be found in \cite[Examples 5.1, 5.5, 5.9]{MRp}.
\end{rem}

In the rest of this section we give several examples of compact flat manifolds satisfying some $p$-isospectralities or $\tau_p$-equivalences for some values of $p$ only. We   denote by $\{e_1,\dots,e_n\}$ the canonical basis of $\R^n$.

We recall from \cite[Thm.~3.1]{MRp} that the multiplicity of the eigenvalue $4\pi^2 \mu$ of $\Delta_{\tau_p,\Gamma}$
is given by
\begin{equation}\label{eq4:mult_mu}
d_{4\pi^2\mu}(\tau_p,\Gamma)
= |F|^{-1} \sum_{\gamma=BL_b\in \Lambda \backslash \Gamma}\tr_p(B) \, e_{\mu,\gamma}(\Gamma),
\end{equation}
where $e_{\mu,\gamma}(\Gamma):= \sum_{v\in {\Lambda^*_\mu}:Bv=v} e^{-2\pi i v\cdot b}$, $\Lambda^*_\mu:=\{v \in \Lambda^* : \| v\|^2=\mu\}$ ($\Lambda^*$  the dual lattice of $\Lambda$) and $\tr_p (B):=\tr(\tau_p(B))$.
If $p=0$, \eqref{eq4:mult_mu}  reads
\begin{equation}\label{eq4:mult_mu0}
d_{4\pi^2\mu}(\tau_0,\Gamma)
= |F|^{-1}  \sum_{\gamma=BL_b\in \Lambda \backslash \Gamma}  \; \sum_{v\in {\Lambda^*_\mu}:Bv=v} e^{-2\pi i v\cdot b}.
\end{equation}

\begin{exam}\label{exam4:Klein_bottles}
We first show a pair of non isometric Klein bottles that are $1$-isospectral but not $0$-isospectral, hence the corresponding Bieberbach groups  cannot be $\tau_1$-equivalent by Proposition~\ref{prop4:other_consequences}~\ref{item4:1-equiv=>0,1-isosp}.

Let $\Gamma=\langle \gamma,L_\Lambda\rangle$ and $\Gamma'=\langle \gamma', L_\Lambda\rangle$, where $\Lambda=\Z e_1\oplus \Z c e_2$ with $c>1$ and in \emph{column notation}
\begin{equation}\label{column}
\begin{array}{|r@{}l|}
\multicolumn{2}{c}{\g}\\
\hline
1&_{\f12} \\
-1& \\
\hline
\end{array}
\qquad\text{and}\qquad
\begin{array}{|r@{}l|}
\multicolumn{2}{c}{\g'}\\
\hline
-1& \\
1&_{\f{1}2} \\
\hline
\end{array}\,.
\end{equation}
That means that $\gamma=B L_b$ and $\gamma'=B' L_{b'}$ where $B=\left[\begin{smallmatrix}1\\&-1\end{smallmatrix}\right]$, $B'=\left[\begin{smallmatrix}-1\\&1\end{smallmatrix}\right]$, $b=\frac12 e_1$ and $b'= \frac 12 ce_2$,
i.e.\ the  column in \eqref{column} gives the rotation part of $\g$, $\g'$ and the subindices indicate their translation vectors.

The manifolds $\Gamma\ba\R^n$ and $\Gamma'\ba\R^n$ are $1$-isospectral in light of \eqref{eq4:mult_mu} since $\tr_1(B)=\tr_1(B')=0$.
However, they are not $0$-isospectral since, by using \eqref{eq4:mult_mu0}, one can see that the smallest eigenvalue for $\Gamma\ba\R^n$,  $\lambda=4\pi^2c^{-2}$, has multiplicity $2$ while $\lambda$ is not an eigenvalue for $\Gamma'\ba\R^n$.

The Klein bottles just defined are homeomorphic. However, it is not hard to give a pair of \emph{non homeomorphic} compact flat $4$-manifolds that are $1$-isospectral but not $0$-isospectral.
We define $\Gamma=\langle \gamma,L_{\Z^4}\rangle$ and $\Gamma'=\langle \gamma', L_{\Z^4}\rangle$ where, in column notation,
\begin{equation*}
\begin{array}{|r@{}l|}
\multicolumn{2}{c} {\g} \\
\hline
1&_{\f12} \\
1& \\
-1&\\
-1& \\
\hline
\end{array}
\qquad\text{and}\qquad
\begin{array}{|r@{}l|}
\multicolumn{2}{c} {\g'} \\
\hline
1&_{\f12} \\[.25cm]
\multicolumn{2}{|c|}{J}\\[.25cm]
-1& \\
\hline
\end{array}.
\end{equation*}
Here $J=\left[\begin{smallmatrix}0&1\\1&0\end{smallmatrix}\right]$ and   $\gamma'=B'L_{b'}$
with $B'=\op{diag}(1,J,-1)\in \GL(4,\R)$ and $b'=(1/2,0,0,0)^t\in\R^4$.

These manifolds are $1$-isospectral because, again, $\tr_1(B)=\tr_1(B')=0$. They are not $0$-isospectral.
Indeed, it follows easily from \eqref{eq4:mult_mu0} that  the smallest nonzero eigenvalue is $4\pi^2$ for both manifolds, but it has multiplicity $\frac12 (8+0)=2$ for the first one and $\frac12 (8-2)=3
$ for the second one.

One can show, by using the theory of Bieberbach groups, that these manifolds cannot be homeomorphic since the holonomy representations are not semiequivalent.
\end{exam}

\begin{exam}\label{exam4:n=4_p_odd_iso}
We now give a pair of $4$-dimensional compact flat manifolds that are $p$-isospectral for $p=1,3$ and they are not $p$-isospectral for $p=0,2,4$.
The corresponding Bieberbach groups cannot be $\tau_p$-equivalent for any $p$, by Proposition~\ref{prop2:tau-equiv=>tau-iso}, for $p$ even and by Proposition~\ref{prop4:other_consequences}~\ref{item4:1-equiv=>0,1-isosp}-\ref{item4:n-1-equiv=>n-1,n-isosp}, for $p$ odd.

The manifolds mentioned are called $M_{24}$ and $M_{25}$ in the notation in \cite[Example 4.8]{CMR}, and can be described as $\Gamma=\langle \gamma_1,\gamma_2,L_{\Z^4}\rangle$ and $\Gamma'=\langle \gamma_1',\gamma_2', L_{\Z^4}\rangle$ where
\begin{equation*}
\begin{array}{|r@{}l|r@{}l|}
\multicolumn{2}{c} {\gamma_1} & \multicolumn{2}{c} {\gamma_2}\\ \hline
-1&       & 1& \\
-1&       &-1&_{\f12} \\
 1&       &-1&  \\
 1&_{\f12}& 1&_{\f12}  \\
\hline
\end{array}
\qquad\qquad
\begin{array}{|r@{}l|r@{}l|}
\multicolumn{2}{c} {\gamma_1'} & \multicolumn{2}{c} {\gamma_2'}\\ \hline
-1&       & 1&_{\f12} \\
-1&       &-1&_{\f12} \\
 1&       &-1&  \\
 1&_{\f12}& 1&  \\
\hline
\end{array}
\end{equation*}
The manifolds $\Gamma\ba\R^n$ and $\Gamma'\ba\R^n$ are non homeomorphic since they have different homology over $\Z_2$.
Indeed, one has that $\beta_1^{\Z_2}(M_{24})=4 \ne \beta_1^{\Z_2}(M_{25})=3$ and $\beta_2^{\Z_2}(M_{24})=6\ne \beta_2^{\Z_2}(M_{25})=4$.
\end{exam}

\begin{exam}\label{exam4:chino}
This is a curious example of two $8$-dimensional flat manifolds which are $p$-isospectral for every $p\in\{1,2,3,5,6,7\}$ but not for $p\in\{0,4,8\}$.
According to Corollary~\ref{coro4:corolarios}, the corresponding Bieberbach groups cannot be $\tau_p$-equivalent for any $p$.

We define $\Gamma=\langle \gamma, L_{\Z^8}\rangle$ and $\Gamma'=\langle \gamma', L_{\Z^8}\rangle$ where
\begin{equation*}
\begin{array}{|r@{}l|r@{}l|r@{}l|}
\multicolumn{2}{c}{\gamma} & \multicolumn{2}{c}{\gamma^2} & \multicolumn{2}{c}{\gamma^3} \\
\hline
\ovr {}{\widetilde J}& & -I &  & -\widetilde J& \\[.25cm]
\widetilde J & & -I & & -\widetilde J & \\[.25cm]
1&_{\f14} & 1&_{\f12} & 1&_{\f34} \\
1& & 1& & 1& \\
-1&  & 1&  & -1&  \\
-1&  & 1&  & -1&  \\
\hline
\end{array}
\qquad\qquad
\begin{array}{|r@{}l|r@{}l|r@{}l|}
\multicolumn{2}{c}{\gamma'} & \multicolumn{2}{c}{{\gamma'}^2} & \multicolumn{2}{c}{{\gamma'}^3} \\
\hline
\ovr {}{\widetilde J}& & -I &  & -\widetilde J& \\[.25cm]
\widetilde J & & -I & & -\widetilde J & \\[.25cm]
1&_{\f14} & 1&_{\f12} & 1&_{\f34} \\
1&_{\f12} & 1& & 1&_{\f12} \\
-1&  & 1&  & -1&  \\
-1&  & 1&  & -1&  \\
\hline
\end{array}.
\end{equation*}
Here $\widetilde J= \left[\begin{smallmatrix}0&1\\ -1&0\end{smallmatrix}\right]$ and $I$ is the $2\times2$ identity matrix.
The elements $\gamma$ and $\gamma'$ have order $4$, thus the manifolds $\Gamma\ba\R^n$ y $\Gamma'\ba\R^n$ have holonomy group isomorphic to $\Z_4$.
We include also the elements $\gamma^2$, $\gamma^3$, ${\gamma'}^2$ and ${\gamma'}^3$ to facilitate the computation of the multiplicities of the eigenvalues.
Note that the only difference between their generators lies only in the sixth coordinate of the translational part, in particular we have $B=B'$ and $\gamma^2={\gamma'}^2$.

We shall compare the spectra of $\Gamma\ba\R^n$ and $\Gamma'\ba\R^n$ by using the formula \eqref{eq4:mult_mu} for the multiplicities of the eigenvalues of the Hodge-Laplace operator on $p$-forms.
The manifolds are $1$-isospectral since $\tr_1(B^k)=0$ for $k=1,2,3$.
One can check that $\tr_2(B)=\tr_2(B^3)=0$ (resp.\ $\tr_3(B)=\tr_3(B^3)=0$), hence the manifolds are $2$-isospectral (resp.\ $3$-isospectral) since the equality in \eqref{eq4:mult_mu} follows from the fact that $\gamma^2={\gamma'}^2$.
The manifolds cannot be $0$-isospectral since the first nonzero eigenvalue $\lambda=4\pi^2$ has different multiplicity in both cases.
Indeed, $d_{\lambda}(\tau_0,\Gamma)=6\neq4 =d_{\lambda}(\tau_0,\Gamma')$.
Since $\det(B)=1$ the manifolds are orientable and then the previous reasoning is valid for $p=5,6,7,8$.
Finally, they cannot be $4$-isospectral since one checks that $\tr_4(B)=\tr_4(B^3)=-2$, $\tr_4(B^2)=6$ and then, by \eqref{eq4:mult_mu}, we obtain that the first nonzero eigenvalue $\lambda=4\pi^2$ has multiplicities $d_\lambda(\tau_0,\Gamma)= 284\neq 288=d_\lambda(\tau_0,\Gamma')$.

These two compact flat manifolds are homeomorphic to each other, but it is not difficult to obtain a similar example with non homeomorphic groups.
Namely we take
\begin{equation*}
\begin{array}{|r@{}l|r@{}l|r@{}l|}
\multicolumn{2}{c}{\gamma} & \multicolumn{2}{c}{\gamma^2} & \multicolumn{2}{c}{\gamma^3} \\
\hline
\ovr {}
{\widetilde J}& & -I &  & -\widetilde J& \\[.25cm]
\widetilde J & & -I & & -\widetilde J & \\[.25cm]
1&_{\f14} & 1&_{\f12} & 1&_{\f34} \\
1&_{\f14} & 1&_{\f12} & 1&_{\f34} \\
-1&  & 1&  & -1&  \\
-1&  & 1&  & -1&  \\
\hline
\end{array}
\qquad\text{and}\qquad
\begin{array}{|r@{}l|r@{}l|r@{}l|}
\multicolumn{2}{c}{\gamma'} & \multicolumn{2}{c}{{\gamma'}^2} & \multicolumn{2}{c}{{\gamma'}^3} \\
\hline
\ovr {}
{\widetilde J}& & -I &  & -\widetilde J& \\[.25cm]
\widetilde J & & -I & & -\widetilde J & \\[.25cm]
J & {\scriptsize  \begin{array}{c}\f12\\[1mm] 0\end{array} }
& I &{\scriptsize  \begin{array}{c}\f12\\[1mm] \f12\end{array} }
& J & {\scriptsize \begin{array}{c}0\\[1mm] \f12\end{array} } \\[.2cm]
1&  & 1&  & 1&  \\
-1&  & 1&  & -1&  \\
\hline
\end{array}.
\end{equation*}
\end{exam}

\section{Negative curvature case}\label{sec:negative}

The goal of this section is to consider the $p$-spectrum of compact hyperbolic manifolds in connection with $\tau_p$-isospectrality.
We set $G= \SO(n,1)$, $K= \Ot(n)$,  $X\simeq \Hy^n$ and $ X_\G \simeq \G\ba\SO(n,1)/K$
thus $X=\Hy^n$ the $n$-dimensional hyperbolic space.
Let $\G\subset\SO(n,1)$ be a discrete cocompact subgroup acting without fixed points on $X$.
We recall that $\SO(n,1)$ is the group of linear transformations on $\R^{n+1}$ preserving the Lorentzian form of signature $(n,1)$ and determinant one.

We will need a description of $\widehat G$.
We will first introduce the principal series representation of $G$.
The group $G$ has an Iwasawa decomposition $G=NAK$, with a corresponding decomposition
$\mathfrak g=\mathfrak k\oplus\mathfrak a\oplus\mathfrak n$ at the Lie algebra level, where $N$ is nilpotent and $A$ is abelian of dimension one.
Let $M$ be the centralizer of $A$ in $K$. One has $M \simeq \Ot(n-1)$.
The Lie subgroup $P=MAN$ of $G$ is a minimal parabolic subgroup of $G$.

If $\nu\in\mathfrak a^*_\C$, then $\xi_\nu(a)=a^\nu$ defines a character of $A$.
We set $\rho_{\mathfrak a}=\f12 (\dim\mathfrak g_\alpha)\, \alpha= \frac{n-1}2 \,\alpha$ where $\alpha$ is the simple root of the pair ($\mathfrak g, \mathfrak a $).
If $(\sigma,V_\sigma)\in\widehat M$ and $\nu\in\mathfrak a^*_\C$, then we let $C_{\sigma,\nu}$  be the space
$$
\left\{ f \textrm{ cont.} : G\rightarrow V_\sigma :
f(mang)=a^{\nu+\rho_{\mathfrak a}}\sigma(m)f(g),\,\forall\, m\in M,a\in A,n\in N \right\}.
$$
If $\langle\,,\,\rangle$ is an $M$-invariant inner product on $V_\sigma$, for $f_1,f_2\in C_{\sigma,\nu}$ set
$$
\langle f_1,f_2\rangle=\int_{M\ba K}\langle f_1(k),f_2(k)\rangle\,dk.
$$
Then $(C_{\sigma,\nu},\langle\,,\,\rangle)$ is a prehilbert space and the Hilbert space completion is denoted by $H_{\sigma,\nu}$.
The action of $G$ by right translations on $C_{\sigma,\nu}$ extends to $H_{\sigma,\nu}$ defining a continuous series of representations of $G$, $(\pi_{\sigma,\nu},H_{\sigma,\nu})$, that is unitary if $\nu\in i\mathfrak a^*$.
It is called the \emph {principal series representations} of $G$.
They are generically irreducible and play a main role in the description of the irreducible representations of $G$.

One usually identifies $\mathfrak a^*_\C$ with $\C$ via the map $\nu\rightarrow\nu(H_0)$, where $H_0 \in \mathfrak a$ satisfies $\alpha(H_0)=1$, in such a way that $\alpha\rightarrow 1$ and $\rho_{\mathfrak a}\rightarrow \frac{n-1}{2}$.
In this way, $\pi_{\sigma,\nu}$ is unitary if $\nu\in i\R$, as mentioned above.

A Hilbert representation $(\pi,H)$ of $G$ is said to be \emph{square integrable} if any $K$-finite matrix coefficient lies in $L^2(G)$.
These representations  were classified by Harish-Chandra and form the so called \emph{discrete series representations of} $G$, denoted $\widehat G_d$.

The determination of the irreducible unitary representations of a general noncompact semisimple Lie group is an open problem, but is known in the particular case of Lie groups of real rank one (see \cite{BB} and also \cite{KS}).
In the case at hand of $G=\SO(n,1)$ one has:

\begin{thm}\label{thm5:unitarydual}
The unitary dual of $G=\SO(n,1)$ consists of
\begin{enumerate}
\item [(i)] the unitary principal series $\pi_{\sigma,\nu}$ for $\nu \in i\R_{\geq0}$, $\sigma \in \widehat M$,
\item [(ii)] the complementary series $\pi_{\sigma,\nu}$ for $0\le \nu < \rho_\sigma$,
\item [(iii)] unitarizable Langlands quotients $J_{\sigma,\rho_\sigma}$,
\item [(iv)] $\widehat G_d$, the discrete series representations of $G$. For $n$ odd one has $\widehat G_d =\emptyset$.
\end{enumerate}
    The number $\rho_\sigma$ in (ii) has the form $\rho-q$ with $q \in \N_0$, $q\le \rho$, where  $q$ depends on the highest weight of $\sigma$.
\end{thm}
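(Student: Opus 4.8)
The plan is to deduce the statement from the Langlands classification together with the rank-one unitarity analysis of \cite{BB} (see also \cite{KS}); I will sketch the structure rather than reproduce those computations.

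First I would invoke the Langlands classification: every irreducible admissible representation of $G$, and in particular every $\pi\in\widehat G$, is either a discrete series representation of $G$ or the unique irreducible quotient $J_{\sigma,\nu}$ of a standard induced module $\operatorname{Ind}_P^G(\sigma\otimes\xi_\nu\otimes\triv)$ with $\sigma\in\widehat M$ and $\operatorname{Re}\nu\ge0$. Since $G=\SO(n,1)$ possesses a compact Cartan subgroup exactly when $\operatorname{rank}K=\operatorname{rank}G$, that is, precisely when $n$ is even, Harish-Chandra's theorem gives $\widehat G_d=\emptyset$ for $n$ odd and the family in (iv) for $n$ even. What then remains is to decide, for each $\sigma\in\widehat M$ and each real $\nu\ge0$, whether $J_{\sigma,\nu}$ admits a positive-definite $G$-invariant inner product; the purely imaginary parameters are already understood, since $(\pi_{\sigma,\nu},H_{\sigma,\nu})$ with $\nu\in i\R_{\ge0}$ is unitary for the pairing $\langle f_1,f_2\rangle=\int_{M\ba K}\langle f_1(k),f_2(k)\rangle\,dk$ introduced above, and these, together with $\widehat G_d$, exhaust the tempered dual, giving (i).

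For the non-tempered range $\nu>0$ I would study the standard intertwining operator $A(\sigma,\nu)\colon\pi_{\sigma,\nu}\to\pi_{\sigma,-\nu}$: its image is a model for $J_{\sigma,\nu}$, and composing the canonical pairing between $\pi_{\sigma,\nu}$ and $\pi_{\sigma,-\nu}$ with $A(\sigma,\nu)$ produces a $G$-invariant Hermitian form on $J_{\sigma,\nu}$, which is unitary if and only if that form is definite. The essential simplification is that the restriction of irreducible representations of $K=\Ot(n)$ to $M\simeq\Ot(n-1)$ is multiplicity free, so $\pi_{\sigma,\nu}|_K$ is $K$-multiplicity free and $A(\sigma,\nu)$ acts on each $K$-type $\tau$ by a scalar $c_\tau(\sigma,\nu)$. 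These scalars obey a recursion along strings of adjacent $K$-types, and tracking the sign of $c_\tau(\sigma,\nu)/c_{\tau_0}(\sigma,\nu)$ relative to the minimal $K$-type $\tau_0$ computes the signature. Carrying this out one finds that $\pi_{\sigma,\nu}$ is irreducible and unitary for $0\le\nu<\rho_\sigma$ (the complementary series (ii)), that at $\nu=\rho_\sigma$ it becomes reducible with the Langlands quotient $J_{\sigma,\rho_\sigma}$ unitary (the endpoint (iii)), and that $J_{\sigma,\nu}$ is non-unitary for $\nu>\rho_\sigma$. The first reducibility point $\rho_\sigma$ is located by inspecting the zeros and poles of the associated $c$-function; it comes out of the form $\rho-q$ with $\rho=\tf{n-1}2$, $q\in\N_0$, $q\le\rho$, depending only on the highest weight of $\sigma$. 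For $\sigma=\sigma_p$ the $p$-exterior representation one gets $q=\min(p,n-1-p)$, matching the constant $\rho_p$ of Theorem~\ref{thm0:d_lambda_neg}. Assembling (i)--(iv) yields the stated description of $\widehat G$.

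The main obstacle is exactly this last step: controlling the sign of $c_\tau(\sigma,\nu)$ uniformly over all $K$-types $\tau$, and in particular verifying that the endpoint quotient $J_{\sigma,\rho_\sigma}$ remains unitary while every $J_{\sigma,\nu}$ with $\nu>\rho_\sigma$ fails to be. This is precisely the content of the rank-one classification carried out in \cite{BB}, which I would quote rather than reprove; the only group-specific input needed is the description of $\widehat M=\widehat{\Ot(n-1)}$ and of the branching from $\Ot(n)$ to $\Ot(n-1)$ recorded in Section~\ref{sec:preliminares}.
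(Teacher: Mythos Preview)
Your sketch is correct in outline and cites the right sources, but note that the paper does not prove this theorem at all: it is stated as a known classification, with the sentence preceding it reading ``is known in the particular case of Lie groups of real rank one (see \cite{BB} and also \cite{KS})'', and no argument follows. So there is nothing to compare against; your proposal simply supplies context that the paper omits. If anything, you have written more than is needed---for the purposes of this paper a bare citation to \cite{BB} and \cite{KS} is what the authors intended.
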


The following theorem gives a description of the subset $\widehat G_{\tau_p}$ of $\widehat G$, that is all we need for the purpose of this paper.

\begin{prop}\label{prop5:hatGtau_neg}
Let $\tau_p$ and $\sigma_p$ be the complexified $p$-exterior representations of $K\simeq \Ot(n)$ and $M\simeq\Ot(n-1)$ respectively.
If $0\leq p\leq n$ and $p\neq \frac n2$, then
\begin{align*}
\widehat{G}_{\tau_p}
    &=\{ \pi_{\sigma_p,\nu} : \nu\in i\R_{\geq0}\cup (0, \rho_p) \} \\
    &\quad \cup \{ \pi_{\sigma_{p-1},\nu} : \nu\in i\R_{\geq0} \cup (0,
\rho_{p-1}) \}
    \cup \, \{J_{\sigma_p,\rho_p}, J_{\sigma_{p-1},\rho_{p-1}} \}.
\end{align*}
Here $\rho_p= \rho_{\mathfrak a}-\min(p,n-1-p)$ and $\rho_{\mathfrak a}=\tfrac{n-1}2$.
In particular,
$$
\widehat {G}_{\tau_0}=\widehat {G}_{\bf 1}=\{ \pi_{\bf 1,\nu} :  \nu\in i\R_{\geq0} \cup (0, \rho_{\mathfrak a}) \}   \cup \{\bf 1 \}.
$$

In the case $n=2m$ and $p=m$, one has
\begin{align*}
\widehat {G}_{\tau_m}
		&= \{\pi_{\sigma_{m-1},\nu} :\nu\in i\R_{\geq0}\cup (0, \tfrac 12)\} \\
		&\quad \cup   \{\pi_{\sigma_m,\nu} :\nu\in i\R_{\geq0}\cup (0, \tfrac 12)\} \cup   \{D_{m}^+ \oplus D_{m}^-  \}.
\end{align*}
Here $D_{\frac n2}^+ \oplus D_{\frac n2}^-$ is the sum of the two
discrete series $D_\frac n2^\pm$ of $\SO(n,1)_0$ having lowest $K$-types
$\tau^\pm _{\frac n2}$.
\end{prop}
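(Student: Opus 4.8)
The plan is to run through the classification of $\widehat G$ in Theorem~\ref{thm5:unitarydual} and decide, family by family, when the $K$-type $\tau_p$ occurs in the restriction to $K\simeq\Ot(n)$. For the principal and complementary series this is immediate from Frobenius reciprocity combined with the branching law of Proposition~\ref{prop2:tau_p|_O(n-1)}; the Langlands quotients force one to look inside reducible standard modules, and the middle degree $p=\tfrac{n}{2}$ brings in the discrete series of $\SO(n,1)_0$.

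First, for $\sigma\in\widehat M$, the compact picture gives $\pi_{\sigma,\nu}|_K\simeq\operatorname{Ind}_M^K(\sigma)$, independently of $\nu$, so by Frobenius reciprocity $\dim\Hom_K(\tau_p,\pi_{\sigma,\nu})=\dim\Hom_M(\tau_p|_M,\sigma)$. By Proposition~\ref{prop2:tau_p|_O(n-1)}, $\tau_p|_M=\sigma_p\oplus\sigma_{p-1}$, and $\sigma_p\not\simeq\sigma_{p-1}$ (they differ by $\det$), so this dimension equals $1$ if $\sigma\in\{\sigma_p,\sigma_{p-1}\}$ and $0$ otherwise; in particular $[\tau_p:\pi|_K]=1$ for every $\pi\in\widehat G_{\tau_p}$. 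Hence, among the unitary principal series, the members of $\widehat G_{\tau_p}$ are exactly $\pi_{\sigma_p,\nu}$ and $\pi_{\sigma_{p-1},\nu}$ with $\nu\in i\R_{\geq0}$, and among the complementary series exactly $\pi_{\sigma_p,\nu}$ with $0<\nu<\rho_{\sigma_p}$ together with $\pi_{\sigma_{p-1},\nu}$ with $0<\nu<\rho_{\sigma_{p-1}}$. Reading the integer $q$ of Theorem~\ref{thm5:unitarydual} off the highest weight of the exterior power gives $q=\min(p,n-1-p)$ for $\sigma_p$, whence $\rho_{\sigma_p}=\rho_{\mathfrak a}-\min(p,n-1-p)=\rho_p$, and likewise $\rho_{\sigma_{p-1}}=\rho_{p-1}$; when one of these vanishes the corresponding open interval is empty and the statement still reads off correctly.

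Next, since every irreducible $(\mathfrak g,K)$-module is a subquotient of some $\pi_{\sigma,\nu}$, the previous paragraph shows that the only elements of $\widehat G$ outside the principal and complementary series that can belong to $\widehat G_{\tau_p}$ are irreducible subquotients of $\pi_{\sigma_p,\nu}$ or $\pi_{\sigma_{p-1},\nu}$ at a reducibility point. For $p\neq\tfrac{n}{2}$ these sit at $\nu=\pm\rho_p$ (resp.\ $\pm\rho_{p-1}$), with composition factors the Langlands quotient $J_{\sigma_p,\rho_p}$ (resp.\ $J_{\sigma_{p-1},\rho_{p-1}}$) and a finite-dimensional representation, which is non-unitary unless it is trivial and hence not in $\widehat G$; so the only new candidates are $J_{\sigma_p,\rho_p}$ and $J_{\sigma_{p-1},\rho_{p-1}}$, the conventions $\sigma_{-1}=\sigma_n=0$ and $J_{\mathbf 1,\rho_{\mathfrak a}}=\mathbf 1$ accounting for the boundary degrees $p=0,n$. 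It remains to verify that $\tau_p$ occurs in $J_{\sigma_p,\rho_p}$ (and in $J_{\sigma_{p-1},\rho_{p-1}}$), not merely in the finite-dimensional constituent: $\tau_p$ occurs once in the standard module $\pi_{\sigma_p,\rho_p}$, hence in exactly one composition factor, and since the two factors share an infinitesimal character the Casimir eigenvalue does not distinguish them; one must invoke the explicit $K$-type structure of the degenerate principal series of $\SO(n,1)$ at its reducibility points, from which $\tau_p$ lands in $J_{\sigma_p,\rho_p}$.

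Finally, for $p=\tfrac{n}{2}=m$, so $n=2m$, one has $\rho_m=\rho_{m-1}=\tfrac12$, and $\sigma_m|_{\SO(n-1)}\simeq\sigma_{m-1}|_{\SO(n-1)}\simeq\bigwedge^{m-1}(\C^{n-1})$ by Hodge duality in odd dimension $n-1$; the corresponding principal series of $\SO(n,1)_0$ at $\nu=\tfrac12$ has the two discrete series $D_m^{+}$, $D_m^{-}$, with lowest $\SO(n)$-types $\bigwedge_{+}^{m}(\C^{n})$ and $\bigwedge_{-}^{m}(\C^{n})$, as its constituents. Since the orientation-reversing component of $K=\Ot(n)$ interchanges $\bigwedge_{+}^{m}$ and $\bigwedge_{-}^{m}$, the representation $D_m^{+}\oplus D_m^{-}$ is an irreducible unitary representation of $G$, both $\pi_{\sigma_m,1/2}$ and $\pi_{\sigma_{m-1},1/2}$ coincide with it, and its restriction to $K$ contains $\tau_m=\bigwedge^{m}(\C^{n})$ with multiplicity one, with no Langlands quotient left over at this endpoint. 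Assembling the three families yields exactly the list claimed. The main obstacle is precisely the $K$-type analysis of the Langlands quotients in the previous paragraph — ruling out that $\tau_p$ is carried by the non-unitary finite-dimensional constituent rather than by $J_{\sigma_p,\rho_p}$ — together with the careful bookkeeping of the self-dual middle degree via the discrete series of $\SO(n,1)_0$.
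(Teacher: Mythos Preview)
Your overall strategy coincides with the paper's: use Frobenius reciprocity and the branching $\tau_p|_M=\sigma_p\oplus\sigma_{p-1}$ to pin down the $M$-types, then run through the classification of $\widehat G$ in Theorem~\ref{thm5:unitarydual}. The treatment of the unitary principal series, the identification $\rho_{\sigma_p}=\rho_p$, and the handling of the middle degree $p=m$ are all along the same lines as the paper, which likewise appeals to the literature (specifically \cite{KS}) for the fine $K$-type structure of the Langlands quotients and of the endpoint representations.

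There is, however, a genuine error in your analysis of the endpoint $\nu=\rho_p$. You assert that for $p\ne\tfrac n2$ the composition factors of $\pi_{\sigma_p,\rho_p}$ are the Langlands quotient $J_{\sigma_p,\rho_p}$ and a \emph{finite-dimensional} representation. This is false for $0<p<n-1$: the subrepresentation at $\nu=\rho_p$ is infinite-dimensional (indeed, for $p=0$ the roles are reversed---it is $J_{\sigma_0,\rho_{\mathfrak a}}=\mathbf 1$ that is finite-dimensional, while the submodule is not). Your subsequent argument ``non-unitary unless trivial, hence not in $\widehat G$'' therefore does not go through, and the same mis-description undermines your implicit exclusion of discrete series from $\widehat G_{\tau_p}$ when $n$ is even and $p\ne m$. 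You also claim the reducibility points of $\pi_{\sigma_p,\nu}$ are exactly $\nu=\pm\rho_p$, which is not justified and in general not true. Since Theorem~\ref{thm5:unitarydual} already lists the members of $\widehat G$, you never needed this composition-series analysis: it suffices, as the paper does, to check for each of the four families which members contain $\tau_p$, citing \cite{KS} for the $K$-type content of $J_{\sigma_p,\rho_p}$ and for the assertion $\widehat G_{\tau_p}\cap\widehat G_d=\emptyset$ when $p\ne\tfrac n2$. Your own final sentence essentially concedes this; the trouble is only the incorrect structural claims you make en route.
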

\begin{proof}
The spherical case, $p=0$ is well-known, so we assume $p>0$.
As mentioned, the unitarizable Langlands quotients $J_{\sigma,\nu}$ occur only at the endpoints of complementary series  $\nu = \rho_\sigma $.

Since $\tau_p|_M=\sigma_p \oplus \sigma_{p-1}$ by Proposition~\ref{prop2:tau_p|_O(n-1)}, Frobenius reciprocity implies that ${\pi_{\sigma, \nu}|}_{K}$ contains $\tau_p$ if and only if  $\sigma = \sigma _p$ or $\sigma = \sigma_{p-1}$.

Now for $n=2m+1$ and $0\le p \le m$ we have complementary series $\pi_{\sigma_p, \nu}$ for $0<\nu < \rho_p =m-p$ (see \cite[Prop.~49]{KS}) and a Langlands quotient $J_{\sigma_p,\rho_p}$ containing $\tau_p$.
For the $M$-type $\sigma_{p-1}$ we have the same description.

We note that in the extreme cases $p=0$ and $p=n$, one gets $J_{\sigma_0,\rho_{\mathfrak a}}=\bf{1}$ and $J_{\sigma_n, \rho_{\mathfrak a}}= \det$.

For $p>m$, $\pi_{\sigma_p,\nu}$ has complementary series for $0<\nu < \rho_p =p-m$ and a Langlands quotient $J_{\sigma_p,\rho_p}$  at the endpoint, with lowest $K$-type $\tau_p$.
 Since $\widehat G_d=\emptyset$, the description of $\widehat G_{\tau_p}$ for $n$ odd is complete.

\smallskip

We now assume $n=2m$.
If $0\le p\le m-1$ we have complementary series $\pi_{\sigma_p, \nu}$ again for $0<\nu< \rho_p = m-\tfrac{1}2-p$ (see \cite[Prop.~50]{KS}) and a Langlands quotient $J_{\sigma_p,\rho_p}$, both containing $\tau_p$, with a similar description for $\sigma_{p-1}$ in place of $\sigma_{p}$.
For $p\ge m+1$, again $\pi_{\sigma_p, \nu}$ has complementary series for $0<\nu < \rho_p =p-(m-\frac12)$ and a Langlands quotient at the endpoint.
Furthermore, $\widehat{G}_{\tau_p}\cap\widehat G_d = \emptyset$ if $p\ne m$, hence the description of $\widehat {G}_{\tau_p}$ is complete in this case.

Finally, if $p=m$, then $\widehat {G}_{\tau_p}\cap \widehat G_d=\{D_m^+\oplus D_m^-\}$ and the unitary representations that contain $\tau_m$ are the unitary principal series and the complementary series $\pi_{\sigma,\nu}$ for $\sigma = \sigma_{m-1},\sigma_m$ and $\nu\in i\R\cup (0,\frac12)$.
Furthermore, at the endpoint $\tfrac 12$, the representations $\pi_{\sigma_{m}, \frac 12}$ and $\pi_{\sigma_{m-1}, \frac 12}$ are reducible and the $K$-type $\tau_m$ is a $K$-type of the irreducible subrepresentation $D_{m}^+ \oplus D_{m}^-$  with multiplicity $1$.
This completes the proof.
\end{proof}

\begin{prop}\label{prop:Casimireigenvalue}
For $\nu \in \C$, the Casimir eigenvalue for the representation $\pi_{\sigma_p,\nu}$ is given by
 \begin{equation}
\ld(C,{\pi_{\sigma_p,\nu}})= -\nu^2+\rho_p^2 =
  -\nu^2 +(\rho_{\mathfrak a}-\min(p, n-1-p))^2.
\end{equation}
In particular $\lambda(C,J_{\sigma_p,\rho_p})=0$ for every $p$.
Furthermore, $\ld(C,{D^\pm_{\tfrac n2}})=0$.
\end{prop}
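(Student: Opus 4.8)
The plan is to identify $\lambda(C,\pi_{\sigma_p,\nu})$ with the value of the Harish--Chandra homomorphism at the infinitesimal character of the principal series $\pi_{\sigma_p,\nu}$, and then to specialize. First I would observe that $C$ lies in the center of $U(\mathfrak g)$ and that each $H_{\sigma,\nu}$ has a well-defined infinitesimal character — all of its composition factors sharing the Harish--Chandra parameter of $\pi_{\sigma,\nu}$ — so $C$ acts on all of $H_{\sigma,\nu}$, hence on every subquotient, by one and the same scalar (for irreducible $\pi_{\sigma,\nu}$ this is already forced by Schur's lemma). Fixing the Cartan subalgebra $\mathfrak h=\mathfrak t_M\oplus\mathfrak a$ of $\mathfrak g=\mathfrak{so}(n,1)$, the Harish--Chandra parameter of $\pi_{\sigma,\nu}$ is $\Lambda_\sigma+\rho_{\mathfrak m}+\nu\in\mathfrak t_M^*\oplus\mathfrak a^*$, where $\Lambda_\sigma$ is the highest weight of $\sigma$. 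Since for $\SO(n,1)$ the nilradical $\mathfrak n\simeq\R^{n-1}$ is the (self-dual) standard $M$-module, one has the orthogonal splitting $\rho_{\mathfrak g}=\rho_{\mathfrak m}+\rho_{\mathfrak a}$ with $\rho_{\mathfrak m}\perp\rho_{\mathfrak a}$; expanding $\langle\Lambda_\sigma+\rho_{\mathfrak m}+\nu,\Lambda_\sigma+\rho_{\mathfrak m}+\nu\rangle-\langle\rho_{\mathfrak g},\rho_{\mathfrak g}\rangle$ and using $\langle\nu,\nu\rangle=\nu^2$, $\langle\rho_{\mathfrak a},\rho_{\mathfrak a}\rangle=\rho_{\mathfrak a}^2$ under the identification $\mathfrak a^*_\C\simeq\C$ fixed before Theorem~\ref{thm5:unitarydual}, everything collapses to
\[
\lambda(C,\pi_{\sigma,\nu})=-\nu^2+\rho_{\mathfrak a}^2-\lambda(C_M,\sigma),
\]
where $\lambda(C_M,\sigma)=\langle\Lambda_\sigma,\Lambda_\sigma+2\rho_{\mathfrak m}\rangle$ is the Casimir eigenvalue of $\sigma$ on $M$, exactly as in \eqref{eq3:Casimir_on_pi}.

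Next I would compute $\lambda(C_M,\sigma_p)$. By Example~\ref{exam2:tau_p-SO(n)} applied to $M\simeq\Ot(n-1)$, the restriction $\sigma_p|_{\SO(n-1)}$ has highest weight $\Lambda^M_p=\varepsilon_1+\dots+\varepsilon_j$ with $j=\min(p,n-1-p)$ (and when $n$ is odd and $p=\tfrac{n-1}2$ it splits into the two pieces of highest weight $\varepsilon_1+\dots+\varepsilon_{j-1}\pm\varepsilon_j$, which one checks have the same Casimir eigenvalue). Substituting the explicit $\rho_{\mathfrak m}$ read off from Table~\ref{tabla:rootsystem} — of type $B$ or $D$ according to the parity of $n-1$ — into $\langle\Lambda^M_p,\Lambda^M_p+2\rho_{\mathfrak m}\rangle$, a one-line telescoping sum gives $\lambda(C_M,\sigma_p)=j(n-1-j)$. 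Hence
\[
\rho_{\mathfrak a}^2-\lambda(C_M,\sigma_p)=\Big(\tfrac{n-1}2\Big)^2-j(n-1-j)=\Big(\tfrac{n-1}2-j\Big)^2=\rho_p^2,
\]
and therefore $\lambda(C,\pi_{\sigma_p,\nu})=-\nu^2+\rho_p^2$ for every $\nu\in\C$, which is the main assertion.

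The remaining two statements then follow formally. The Langlands quotient $J_{\sigma_p,\rho_p}$ is a subquotient of the (reducible) principal series $\pi_{\sigma_p,\rho_p}$, on which $C$ acts by $\lambda(C,\pi_{\sigma_p,\rho_p})=-\rho_p^2+\rho_p^2=0$; hence $\lambda(C,J_{\sigma_p,\rho_p})=0$. Likewise, for $n=2m$ the discrete series $D_m^{\pm}$ occur inside $\pi_{\sigma_m,1/2}$ (as recorded in the proof of Proposition~\ref{prop5:hatGtau_neg}), and since $\rho_m=\tfrac{n-1}2-\min(m,m-1)=\tfrac12$, $C$ acts on $D_m^{\pm}$ by $-\rho_m^2+\rho_m^2=0$.

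The weight computation of the second paragraph is routine. The points that will need care are the bookkeeping ones: pinning down the normalization and sign of $C$ from Subsection~\ref{subsec:bundles} so that the coefficient of $\nu^2$ comes out $-1$ and the additive constant is exactly $\rho_{\mathfrak a}^2-\lambda(C_M,\sigma_p)$ — most safely settled by checking the spherical case $p=0$, where $\lambda(C,\pi_{\triv,\nu})=-\nu^2+\rho_{\mathfrak a}^2$ must reproduce the known spectral parameter $\big(\tfrac{n-1}2\big)^2$ at the bottom of the $L^2$-spectrum of the scalar Laplacian on $\Hy^n$ — and verifying that $C$ does act by a single scalar on the reducible endpoint modules, which holds because the center of $U(\mathfrak g)$ acts through the common infinitesimal character of a principal series. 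I do not expect any essential difficulty beyond these conventions.
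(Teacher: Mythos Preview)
Your proposal is correct and follows essentially the same route as the paper: both start from the standard formula $\lambda(C,\pi_{\sigma,\nu})=-\nu^2+\rho_{\mathfrak a}^2-c_\sigma$ with $c_\sigma=\langle\Lambda_\sigma,\Lambda_\sigma+2\rho_M\rangle$, compute $c_{\sigma_p}$ from the explicit highest weight and $\rho_M$, and deduce the statements for $J_{\sigma_p,\rho_p}$ and $D_{n/2}^\pm$ by passing to subquotients of the endpoint principal series. Your treatment is slightly tidier in that you unify the two parities into the single identity $c_{\sigma_p}=j(n-1-j)$ with $j=\min(p,n-1-p)$ and then complete the square, whereas the paper carries the cases $n=2m+1$ and $n=2m$ separately; but this is a cosmetic difference, not a different argument.
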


\begin{proof}
It is well known that the Casimir eigenvalue for the principal series is given by
\begin{equation}\label{eq:Casimir}
\ld(C,\pi_{\sigma,\nu})=-\nu^2 + \rho_{\mathfrak a}^2  -c_\sigma
\end{equation}
where $c_\sigma=\langle\Ld_\sigma+\rho_M,\Ld_\sigma+\rho_M\rangle- \langle\rho_M,\rho_M\rangle$, $\Lambda_\sigma$ is the highest weight of $\sigma$ and
\begin{equation*}
\rho_M=
\begin{cases}
\sum\limits_{j=1}^m  \,(m-j)\varepsilon_j                 &\quad\text{if } n=2m+1, \\
\sum\limits_{j=1}^{m-1}\,(m-j-\tfrac12) \varepsilon_j    &\quad\text{if } n=2m.
\end{cases}
\end{equation*}
Furthermore, for $\sigma=\sigma_p$ we have $\Ld_{\sigma_p} =\sum_{j=1}^{\min(p,n-p)} \varepsilon_j$ (see Example~\ref{exam2:tau_p-SO(n)}).

We assume first that $0\le p \le [\frac n2]=m$.
By a calculation one can see that
\begin{equation*}
c_\sigma =
\begin{cases}
p+ 2\sum\limits_{j=1}^p(m-j)=p +2mp -p(p+1) & \textrm{ if $n$ is odd},\\
p + 2\sum\limits_{j=1}^p(m-\tfrac 12-j)=p +2(m-\tfrac 12)p -p(p+1) &\textrm{ if $n$ is even}.
\end{cases}
\end{equation*}
Thus, in light of \eqref{eq:Casimir},
\begin{equation*}
\ld(C,{\pi_{\sigma_p,\nu}})=
    \begin{cases}
      -\nu^2 +(m-p)^2
        &\quad\text{if $n=2m+1$},\\
      -\nu^2 +\left(m-p-\tfrac 12\right)^2
        &\quad\text{if $n=2m$},
    \end{cases}
\end{equation*}
which establishes the formula.

On the other hand, for $p>[\tfrac n2]$, one has that $\ld(C,{\pi_{\sigma_p,\nu}}) = \ld(C,{\pi_{\sigma_{n-1-p},\nu}})$ and finally, for $n=2m$, $\ld(C,{D^\pm_m})= \ld(C,\pi_{\sigma_m,\frac 12}) = 0$, as asserted.
\end{proof}

After all this preparation, we can prove the results in the Introduction  for
negatively curved  manifolds.

\begin{proof}[Proofs of Theorem~\ref{thm0:d_lambda_neg}]
For each  $\ld$,  set $\widehat {G}_{\tau_p,\ld} = \{\pi \in {G}_{\tau_p} : \ld(C,\pi)=\ld \}$.
If $p=0$, then the representations in $\widehat {G}_{{\bf 1},\ld}$ for any fixed $\ld$ have the form $\,\pi_{{\bf 1},\nu}\,$ with $\,\nu\in i\R_{\geq0}\cup(0,\rho_{\mathfrak a})\,$ and the equality $\ld(C,{\pi_{\mathbf{1},\nu}})=-\nu^2+\rho_{\mathfrak a}^2=\ld$ determines $\nu = \sqrt{\rho_{\mathfrak a}^2-\lambda}$, where $\nu \in i\R_{\geq0}$ if $\ld \ge \rho_{\mathfrak a}^2$ and $\nu \in (0,\rho_{\mathfrak a}]$ otherwise.

\smallskip

Assume now that $0<p\le [\frac n2]$.
For $\lambda=0$ we have
$$
{{\widehat {G}_{\tau_p,0}}} =
    \begin{cases}
      \left\{J_{\sigma_p,\rho_p}, J_{\sigma_{p-1},\rho_{p-1}}\right\}
        &\quad\text{if } p\neq\frac{n}{2},\\[1mm]
      \left\{D_{\frac n2}^+\oplus D_{\frac n2}^-\right\}
        &\quad\text{if } p=\frac{n}{2},
    \end{cases}
$$
therefore
\begin{equation}\label{eq5:ld=0}
d_0(\tau_p,\G) =
\begin{cases}
  n_\G\left(J_{\sigma_p, \rho_p}\right) + n_\G\left(J_{\sigma_{p-1}, \rho_{p-1}}\right)
    &\text{if } p\neq\frac{n}{2},\\[1mm]
  n_\G\left(D_\frac n2^+ \oplus D_\frac n2^-\right)
    &\text{if } p=\frac{n}{2}.
\end{cases}
\end{equation}

\smallskip

Now, let $\ld>0$.
Since $\ld(C,{\pi_{\sigma_p,\nu}})=-\nu^2+\rho_p^2=\ld$, then $\nu= \sqrt{\rho_p^2-\lambda}$ where $\nu \in (0, \rho_p)\cup i\R_{\geq0}$ and similarly for $\ld(C,{\pi_{\sigma_{p-1},\nu}})=\ld$.
Thus, we get
\begin{equation*}
{\widehat {G}_{\tau_p, \lambda}}  =\left\{ \pi_{\sigma_p,\sqrt {\rho_{p}^2-\lambda}} ,\; \pi_{\sigma_{p-1},\sqrt {\rho_{p-1}^2-\lambda }}\right\}
\end{equation*}
and
\begin{equation}\label{eq5:ld>0}
d_\lambda (\tau_p,\Gamma)=
\begin{cases}
n_\G\left(\pi_{\sigma_p,\sqrt{\rho_{p}^2-\lambda}}\right) + n_\G\left(\pi_{\sigma_{p-1},\sqrt{\rho_{p-1}^2-\lambda}}\right)
    &\text{if }p\neq\frac{n}{2},\\[2mm]
n_\G\left(\pi_{\sigma_{m},\sqrt{1/4-\lambda}}\right) +
n_\G\left(\pi_{\sigma_{m-1},\sqrt{1/4-\lambda}}\right)
    &\text{if }p=\frac{n}{2}=m.\\
\end{cases}
\end{equation}
This completes the proof for $p\le [\frac n2]$.
The case $p>[\frac n2]$ is similar.
\end{proof}

The following lemma is the analogue of Lemma~\ref{lem4:p-1-equiv,p-iso0>p-equiv} in the flat case.

\begin{lemma}\label{lem5:tau_p-1,p-iso=>tau_p}
Let $\G_1$ and $\G_2$ be discrete cocompact subgroups of $\SO(n,1)$ acting freely on $\Hy^n$.
If $\G_1$ and $\G_2$ are $\tau_{p-1}$-equivalent (or $\tau_{p+1}$-equivalent) and the manifolds $\G_1\ba \Hy^n$ and $\G_2\ba \Hy^n$ are $p$-isospectral, then $\G_1$ and $\G_2$ are $\tau_p$-equivalent.
In particular, $0$-isospectrality implies $\tau_0$-equivalence.
\end{lemma}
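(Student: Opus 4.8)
The plan is to mimic exactly the argument of Lemma~\ref{lem4:p-1-equiv,p-iso0>p-equiv} in the flat case, using the hyperbolic analogues of the multiplicity formula and of the description of $\widehat G_{\tau_p}$ established above in Proposition~\ref{prop5:hatGtau_neg} and Theorem~\ref{thm0:d_lambda_neg}. The essential structural fact is that $\widehat G_{\tau_{p-1}}$ contains all the representations built on the $M$-type $\sigma_{p-1}$, and that these are precisely the ``second family'' of representations appearing in the formula for $d_\lambda(\tau_p,\Gamma)$; so $\tau_{p-1}$-equivalence pins down the $\sigma_{p-1}$-contribution, and $p$-isospectrality then pins down the $\sigma_p$-contribution, giving $\tau_p$-equivalence.

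In detail: assume first $p\ne \tfrac n2$ (treating first $0<p\le[\tfrac n2]$, the range $p>[\tfrac n2]$ being entirely symmetric, and $p=\tfrac n2$ separately at the end). Since $\Gamma_1,\Gamma_2$ are $\tau_{p-1}$-equivalent, $n_{\Gamma_1}(\pi)=n_{\Gamma_2}(\pi)$ for every $\pi\in\widehat G_{\tau_{p-1}}$; by Proposition~\ref{prop5:hatGtau_neg} this set contains $\pi_{\sigma_{p-1},\nu}$ for all $\nu\in i\R_{\ge0}\cup(0,\rho_{p-1})$ and $J_{\sigma_{p-1},\rho_{p-1}}$. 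Hence $n_{\Gamma_1}$ and $n_{\Gamma_2}$ agree on all representations of the form $\pi_{\sigma_{p-1},\sqrt{\rho_{p-1}^2-\lambda}}$ (for every $\lambda>0$) and on $J_{\sigma_{p-1},\rho_{p-1}}$. On the other hand, $p$-isospectrality together with Theorem~\ref{thm0:d_lambda_neg} gives, for $\lambda>0$,
\begin{equation*}
n_{\Gamma_1}\!\left(\pi_{\sigma_p,\sqrt{\rho_p^2-\lambda}}\right)+n_{\Gamma_1}\!\left(\pi_{\sigma_{p-1},\sqrt{\rho_{p-1}^2-\lambda}}\right)
= n_{\Gamma_2}\!\left(\pi_{\sigma_p,\sqrt{\rho_p^2-\lambda}}\right)+n_{\Gamma_2}\!\left(\pi_{\sigma_{p-1},\sqrt{\rho_{p-1}^2-\lambda}}\right),
\end{equation*}
and for $\lambda=0$, $n_{\Gamma_1}(J_{\sigma_p,\rho_p})+n_{\Gamma_1}(J_{\sigma_{p-1},\rho_{p-1}})=n_{\Gamma_2}(J_{\sigma_p,\rho_p})+n_{\Gamma_2}(J_{\sigma_{p-1},\rho_{p-1}})$. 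Subtracting the already-known $\sigma_{p-1}$-terms, we deduce $n_{\Gamma_1}(\pi_{\sigma_p,\nu})=n_{\Gamma_2}(\pi_{\sigma_p,\nu})$ for all admissible $\nu$ and $n_{\Gamma_1}(J_{\sigma_p,\rho_p})=n_{\Gamma_2}(J_{\sigma_p,\rho_p})$. Combining, $n_{\Gamma_1}$ and $n_{\Gamma_2}$ agree on every representation in $\widehat G_{\tau_p}$ as listed in Proposition~\ref{prop5:hatGtau_neg}, i.e.\ $\Gamma_1$ and $\Gamma_2$ are $\tau_p$-equivalent. The case $p=\tfrac n2=m$ is identical, with $\rho_p=\rho_{p-1}=\tfrac12$ and with the discrete series summand $D_m^+\oplus D_m^-$ playing the role of the $\lambda=0$ term: $\tau_{m-1}$-equivalence fixes the $\sigma_{m-1}$-contributions and then $m$-isospectrality fixes the $\sigma_m$-contributions. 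The parenthetical variant (assuming $\tau_{p+1}$-equivalence instead) follows the same way, since $\widehat G_{\tau_{p+1}}$ contains all $\pi_{\sigma_p,\nu}$ and $J_{\sigma_p,\rho_p}$, so now the $\sigma_p$-terms are the known ones and $p$-isospectrality fixes the $\sigma_{p-1}$-terms. Finally, the ``in particular'' assertion is the special case $p=0$: here $\widehat G_{\tau_0}$ involves only the $\sigma_0=\mathbf 1$ family (there is no $\sigma_{-1}$), so $0$-isospectrality directly reads off $n_{\Gamma_i}(\pi_{\mathbf 1,\nu})$ and $n_{\Gamma_i}(\mathbf 1)$, and no input from a neighbouring degree is needed.

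I do not expect a serious obstacle here; the content is entirely bookkeeping on the two formulas already proved. The one point requiring a little care is matching parameters: one must check that as $\lambda$ ranges over $(0,\infty)$ the value $\nu=\sqrt{\rho_{p-1}^2-\lambda}$ (resp.\ $\sqrt{\rho_p^2-\lambda}$) sweeps out exactly the parameter set $i\R_{\ge0}\cup(0,\rho_{p-1})$ (resp.\ $(0,\rho_p)$) indexing $\widehat G_{\tau_{p-1}}$ (resp.\ the $\sigma_p$-part of $\widehat G_{\tau_p}$), so that the multiplicity identities really do cover all relevant representations — but this is immediate from the monotonicity of $\lambda\mapsto\sqrt{c-\lambda}$ and was already used in the proof of Theorem~\ref{thm0:d_lambda_neg}. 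One should also note that the endpoint $\nu=\rho_{p-1}$ corresponds to $\lambda=0$ and the Langlands quotient, which is why the $\lambda=0$ case must be handled alongside $\lambda>0$; with that included, every $\pi\in\widehat G_{\tau_p}$ is accounted for and the conclusion follows.
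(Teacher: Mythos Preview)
Your proposal is correct and follows essentially the same approach as the paper's proof: use Proposition~\ref{prop5:hatGtau_neg} to see that $\tau_{p-1}$-equivalence pins down the multiplicities $n_\Gamma(\pi_{\sigma_{p-1},\nu})$ and $n_\Gamma(J_{\sigma_{p-1},\rho_{p-1}})$, then subtract these from the identities coming from Theorem~\ref{thm0:d_lambda_neg} (equations \eqref{eq5:ld=0} and \eqref{eq5:ld>0}) to recover the $\sigma_p$-multiplicities. Your treatment is in fact somewhat more explicit than the paper's, which handles only the generic case $p\notin\{\tfrac n2,\tfrac n2+1\}$ in detail and dismisses the remaining cases with ``proved similarly.''
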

\begin{proof}
Assume that $p\not \in\{\frac n2, \frac n2+1\}$.
Since $\G_1$ and $\G_2$ are $\tau_{p-1}$-equivalent, we have
\begin{align*}
n_{\G_1}\left(J_{\sigma_{p-1},\rho_{p-1}}\right)
    & = n_{\G_2}\left(J_{\sigma_{p-1},\rho_{p-1}}\right)\\
n_{\G_1}\left(\pi_{\sigma_{p-1},\nu}\right)
    & = n_{\G_2}\left(\pi_{\sigma_{p-1},\nu}\right)
\end{align*}
for every $\nu\in i\R_{\geq0}\cup (0,\rho_{p-1})$ by Proposition~\ref{prop5:hatGtau_neg}.
Now, by $p$-isospectrality we have that $d_\ld(\tau_p,\G_1)=d_\ld(\tau_p,\G_2)$ for every $\ld$, thus \eqref{eq5:ld=0} implies that $n_{\G_1}\left(J_{\sigma_{p},\rho_{p}}\right) = n_{\G_2}\left(J_{\sigma_{p},\rho_{p}}\right)$ and \eqref{eq5:ld>0} implies $n_{\G_1}\left(\pi_{\sigma_{p},\nu}\right) = n_{\G_2}\left(\pi_{\sigma_{p},\nu}\right)$ for every $\nu\in i\R_{\geq0}\cup (0,\rho_{p})$.
By Proposition~\ref{prop5:hatGtau_neg}, these equations imply $\tau_p$-equivalence.

The remaining cases are proved similarly.
 \end{proof}

\begin{proof}[Proof of Theorem~\ref{thm0:A} (noncompact case)]
The proof is exactly as in the flat case (see page \pageref{proof4:thm0}), since Lemma~\ref{lem4:p-1-equiv,p-iso0>p-equiv} and Lemma~\ref{lem5:tau_p-1,p-iso=>tau_p} have exactly the same statements.
\end{proof}

\begin{rem}
One can also prove the above result for intervals decreasing from $n$, that is:
\emph{$q$-isospectrality for every $p\leq q\leq n$ is equivalent to $\tau_q$-equivalence for every $p\leq q\leq n$.}
\end{rem}

\begin{rem}\label{rmk4:Hodge}
We now consider the Hodge decomposition of compact hyperbolic manifolds as in Remark~\ref{rmk3:Hodge} and Remark~\ref{rmk4:Hodge}.
One obtains here results that are very similar to those in the flat case.
Namely
\begin{quote}
  $\Gamma_1\ba \Hy^n$ and $\Gamma_2\ba \Hy^n$ are isospectral on closed (resp.\ coclosed) $p$-forms if and only if $n_{\Gamma_1}(\pi_{\sigma_{p},\nu}) = n_{\Gamma_2}(\pi_{\sigma_{p},\nu})$ (resp.\ $n_{\Gamma_1}(\pi_{\sigma_{p-1},\nu}) = n_{\Gamma_2}(\pi_{\sigma_{p-1},\nu})$) for every $\nu\in i\R_{\geq0}\cup(0,\rho_p)$ (resp.\ $\nu\in i\R_{\geq0}\cup(0,\rho_{p-1})$).
\end{quote}
\end{rem}

\bibliographystyle{plain}

\end{document}